\newtheorem{theorem}{Theorem}[section]
\newtheorem{lemma}[theorem]{Lemma}
\newtheorem{proposition}[theorem]{Proposition}
\newtheorem{corollary}[theorem]{Corollary}
\newtheorem{conjecture}[theorem]{Conjecture}
\newtheorem{question}[theorem]{Question}
\newtheorem{thmx}{Theorem}
\newtheorem{corx}[thmx]{Corollary}
\theoremstyle{definition}
\newtheorem{definition}[theorem]{Definition}
\newtheorem{example}[theorem]{Example}
\newtheorem{assumption}[theorem]{Assumption}
\newtheorem{geometric setting}[theorem]{Geometric setting}
\theoremstyle{remark}
\newtheorem{remark}[theorem]{Remark}
\numberwithin{equation}{section}
\newenvironment{texteqn}
{\begin{equation} \begin{minipage}{0.9\linewidth}}
{\end{minipage} \end{equation} \ignorespacesafterend}
\newcommand{\KO}{\textnormal{KO}}
\newcommand{\cal}{\mathcal}
\newcommand{\tensor}{\otimes}
\newcommand{\dd}{\mathrm{d}}
\newcommand{\abs}[1]{\left\lvert#1\right\rvert}
\newcommand{\norm}[1]{\left\|#1\right\|}
\newcommand{\defeq}{\mathrel{\vcentcolon=}}
\newcommand{\vertiii}[1]{{\left\vert\kern-0.25ex\left\vert\kern-0.25ex\left\vert #1 
    \right\vert\kern-0.25ex\right\vert\kern-0.25ex\right\vert}}
\renewcommand{\rm}{\mathrm}
\DeclareMathOperator{\rad}{rad}
\DeclareMathOperator{\id}{id}
\DeclareMathOperator{\dom}{dom}
\DeclareMathOperator{\RR}{\mathbb{R}}
\DeclareMathOperator{\ZZ}{\mathbb{Z}}
\DeclareMathOperator{\CC}{\mathbb{C}}
\DeclareMathOperator{\ind}{index}
\DeclareMathOperator{\width}{width}
\DeclareMathOperator{\relind}{rel-ind}
\DeclareMathOperator{\scal}{scal}
\DeclareMathOperator{\nscal}{\overline\scal}
\DeclareMathOperator{\supp}{supp}
\DeclareMathOperator{\Pop}{P}
\DeclareMathOperator{\D}{D}
\DeclareMathOperator{\Qop}{Q}
\DeclareMathOperator{\Rop}{R}
\DeclareMathOperator{\Top}{T}
\DeclareMathOperator{\Bop}{B}
\DeclareMathOperator{\Zop}{Z}
\DeclareMathOperator{\cc}{c}
\DeclareMathOperator{\dist}{dist}
\DeclareMathOperator{\spinor}{\slashed{\mathfrak S}}
\DeclareMathOperator{\cl}{\textnormal{Cl}}
\DeclareMathOperator{\spind}{\slashed{\mathfrak D}}
\begin{document}

\title[A long neck principle for Riemannian spin manifolds]{A long neck principle for Riemannian spin manifolds with positive scalar curvature}

\author{Simone Cecchini}
\address{Mathematisches Institut,
Georg-August-Universit\"at, 
G{\"o}ttingen,
Germany}
\email{simone.cecchini@mathematik.uni-goettingen.de}

\begin{abstract}
We develop index theory on compact Riemannian spin manifolds with boundary in the case when the topological information is encoded by bundles which are supported away from the boundary.
As a first application, we establish a ``long neck principle'' for a compact Riemannian spin $n$-manifold with boundary $X$, stating that if $\scal(X)\geq n(n-1)$ and there is a nonzero degree map into the sphere $f\colon X\to S^n$ which is strictly area decreasing, then the distance between the support of $\dd f$ and the boundary of $X$ is at most $\pi/n$.
This answers, in the spin setting and for strictly area decreasing maps, a question recently asked by Gromov.
As a second application, we consider a Riemannian manifold $X$ obtained by removing $k$ pairwise disjoint embedded $n$-balls from a closed spin $n$-manifold $Y$.
We show that if $\scal(X)>\sigma>0$ and $Y$ satisfies a certain condition expressed in terms of higher index theory, then the radius of a geodesic collar neighborhood of $\partial X$ is at most $\pi \sqrt{(n-1)/(n\sigma)}$.
Finally, we consider the case of a Riemannian $n$-manifold $V$ diffeomorphic to $N\times [-1,1]$, with $N$ a closed spin manifold with nonvanishing Rosenebrg index.
In this case, we show that if $\scal(V)\geq\sigma>0$, then the distance between the boundary components of $V$ is at most $2\pi \sqrt{(n-1)/(n\sigma)}$.
This last constant is sharp by an argument due to Gromov.
\end{abstract}

\maketitle



\section{Introduction and main results}
The study of manifolds with positive scalar curvature has been a central topic in differential geometry in recent decades.
On closed spin manifolds, the most powerful known obstruction to the existence of such metrics is based on the index theory for the spin Dirac operator.
Indeed, the Lichnerowicz formula~\cite{Lic63} implies that, on a closed spin manifold $Y$ with positive scalar curvature, the spin Dirac operator is invertible and hence its index must vanish. 

When $X$ is a compact Riemannian manifold with boundary of dimension at least three, it is well known by classical results of Kazdan and Warner~\cite{KW75,KW75ii,KWiii} that $X$ always carries a metric of positive scalar curvature.
In order to use topological information to study metrics of positive scalar curvature on $X$, we need extra geometric conditions.
When $X$ is equipped with a Riemannian metric with a product structure near the boundary, it is well known~\cite{APSI,APSII,APSIII} that the Dirac operator with global boundary conditions is elliptic.
This fact has been extensively used in the past decades to study metrics of positive scalar curvature in the spin setting.

The purpose of this paper is to systematically extend the spin Dirac operator technique to the case when the metric does not necessarily have a product structure near the boundary and the topological information is encoded by bundles supported away from the boundary.
As an application, we prove some metric inequalities with scalar curvature on spin manifolds with boundary, following the point of view recently proposed by Gromov.


\subsection{Some questions by Gromov on manifolds with boundary}\label{SS:some questions by Gromov}
Recall that a map of Riemannian manifolds $f\colon M\to N$ is called $\epsilon$-area contracting if $\norm{f^\ast\omega}\leq\epsilon\norm{\omega}$, for all two-forms $\omega\in \Lambda^2(N)$.
When $\epsilon\leq 1$, we say that $f$ is \emph{area decreasing}.
When $\epsilon<1$, we say that $f$ is \emph{strictly area decreasing}.

Let $(X,g)$ be a compact oriented $n$-dimensional Riemannian manifold with boundary and let $f\colon (X,g)\to(S^n,g_0)$ be a smooth area decreasing map, where $g_0$ denotes the standard round metric on the sphere.
The ``length of the neck'' of $(X,f)$ is defined as the distance between the support of the differential of $f$ and the boundary of $X$.
The \emph{long neck problem}~\cite[page~87]{Gro19} consists in the following question.


\begin{question}[Long Neck Problem]\label{Q:LNP}
What kind of a lower bound on $\scal_g$ and a lower bound on the ``length of the neck'' of $(X, f)$ would make $\deg(f) = 0$?
\end{question}


\begin{remark}
In this case, the topological obstruction is the existence of an area decreasing map $f\colon (X,g)\to(S^n,g_0)$ of nonzero degree.
The extra geometric information is given by the ``length of the neck'' of $(X, f)$ and the lower bound of $\scal_g$.
\end{remark}


\begin{remark}
More precisely, Gromov~\cite[page~87]{Gro19} conjectured the existence of a constant $c_n>0$, depending only on the dimension $n$ of the manifold $X$, such that
\begin{equation}\label{E:LNP1}
    \left[\scal_g\geq n(n-1)\right]\&\left[\dist(\supp(\dd f),\partial X)\geq c_n\right]\Rightarrow \deg(f)=0.
\end{equation}
The main motivation of this paper is to prove this inequality in the case when $X$ is spin.
\end{remark}


We will now review two conjectures recently proposed by Gromov, which are related to the long neck problem.
Let $Y$ be a closed $n$-dimensional manifold.
Let $X$ be the $n$-dimensional manifold with boundary obtained by removing a small $n$-dimensional ball from $Y$.
Observe that $X$ is a manifold with boundary $\partial X\cong S^{n-1}$.
Let $g$ be a Riemannian metric on $X$.
For $R>0$ small enough, denote by $B_R(\partial X)$ the geodesic collar neighborhood of $\partial X$ of width $R$.
Gromov proposed the following conjecture~\cite[Conjecture~D',~11.12]{Gro18}.


\begin{conjecture}\label{C:punctured torus}
Let $Y$ be a closed $n$-dimensional manifold such that $Y$ minus a point admits no complete metric of positive scalar curvature.
Let $X$ be the manifold with boundary obtained by removing a small $n$-dimensional ball from $Y$.
Let $g$ be a Riemannian metric on $X$ whose scalar curvature is bounded from below by a constant $\sigma>0$.
Then there exists a constant $c>0$ such that if there exists a geodesic collar neighborhood $B_R(\partial X)$ of width $R$, then
\begin{equation}
    R\leq\frac{c}{\sqrt\sigma}.
\end{equation}
\end{conjecture}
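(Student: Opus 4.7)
The plan is to prove the conjecture in the spin setting, substituting the topological hypothesis ``$Y\setminus\{\mathrm{pt}\}$ carries no complete positive scalar curvature metric'' by its natural $K$-theoretic strengthening---nonvanishing of the Rosenberg index $\alpha(Y)\in\KO_n(C^\ast\pi_1(Y))$---and running a Callias-type Dirac argument on $X$ with a potential supported in the geodesic collar $B_R(\partial X)$. The upshot will be the sharp bound $R\leq\pi\sqrt{(n-1)/(n\sigma)}$ already advertised in the abstract.

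Concretely, let $D$ denote the spin Dirac operator on $X$ twisted by the Mishchenko bundle $\mathcal{L}\to X$ with fiber $C^\ast\pi_1(Y)$, and adjoin an auxiliary $\ZZ/2$-graded $\Cliff_1$-module with an odd self-adjoint involution $\Psi$. For a smooth profile $f\colon X\to[0,\infty)$ depending only on $t=\dist(\cdot,\partial X)$ in the collar and vanishing outside $B_R(\partial X)$, I form the Callias-type operator $\mathcal{D}_f=D\otimes 1 + f\cdot\Psi$. The first, main step is to show that, with the boundary/support conditions developed in the earlier sections of this paper, the $C^\ast\pi_1(Y)$-linear index of $\mathcal{D}_f$ is independent of $f$ and, via a homotopy that fills in the missing ball to recover $Y$, coincides with $\alpha(Y)$; in particular $\mathcal{D}_f$ cannot be invertible for any admissible $f$.

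The second step is analytic. The Schr\"odinger--Lichnerowicz identity, together with $\Psi^2=\Id$ and $\{D,f\Psi\}=c(\grad f)\Psi$, gives
\begin{equation*}
\mathcal{D}_f^2 = \nabla^\ast\nabla + \tfrac14 \scal_g + f^2 + c(\grad f)\Psi.
\end{equation*}
Applied to a kernel section and integrated using the sharp spinorial (Friedrich-type) refinement that recovers the extra factor $\tfrac{n-1}{n}$, this forces a point of $X$ where $\tfrac{n}{4(n-1)}\sigma + f(t)^2 - |f'(t)|\leq 0$. Consequently, if one can construct a smooth positive decreasing profile on $[0,R]$ with $f(R)=0$, $f(0)=+\infty$, and $-f'(t)<\tfrac{n\sigma}{4(n-1)} + f(t)^2$, then $\mathcal{D}_f$ must be invertible, a contradiction. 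Separation of variables shows that such a profile exists exactly when
\begin{equation*}
R < \int_0^\infty\frac{du}{\tfrac{n\sigma}{4(n-1)}+u^2} = \pi\sqrt{\tfrac{n-1}{n\sigma}},
\end{equation*}
so $R$ must be bounded by this number. The main obstacle is the first step: rigorously identifying the Callias-type index of $\mathcal{D}_f$ on the compact manifold with boundary $X$, whose topological data lies at positive distance from $\partial X$, with the Rosenberg index of the closed manifold $Y$. This identification is precisely what the machinery developed in the body of the paper is designed to provide; once it is in place, the analytic part above is essentially routine.
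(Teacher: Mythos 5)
First, note that the statement you are proving is stated in the paper only as a conjecture (Gromov's Conjecture~D'); the paper itself establishes it only for spin manifolds $Y$ satisfying Condition~\eqref{C:relative higher index}, by combining Theorem~\ref{T:HLN} (the collar bound) with Theorem~\ref{T:complete metrics on torus} (which verifies the conjecture's hypothesis on $Y$ minus a point). Your proposal replaces the hypothesis by $\alpha(Y)\neq 0$, and here lies the essential gap: the index identification in your first step is false. With a \emph{single} twisting bundle $\mathcal{L}$ and a nonnegative potential $f$ supported in the collar, the operator $\mathcal{D}_f=D\otimes 1+f\Psi$ always has vanishing index whenever it is Fredholm: the homotopy $f_t=f+(1-t)\lambda$ with $\lambda>0$ large keeps the potential uniformly positive near $\partial X$ (so Fredholmness persists), and at $t=0$ one has $\mathcal{D}_{f+\lambda}^2\geq (f+\lambda)^2-\abs{\dd f}>0$, so the index is $0$, not $\alpha(Y)$. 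There is no homotopy ``filling in the missing ball'': the Callias/Gromov--Lawson mechanism only detects a \emph{relative} datum. This is precisely why Sections~\ref{S:GL}--\ref{S:LNP} of the paper work with a pair of bundles --- $E$ the pullback of the Mishchenko bundle and $F$ the trivial $C^\ast\Gamma$-bundle, trivialized near $\partial X$ --- and with the operator $\Pop_{\rho,\phi}^{E,F}$ that couples $\Qop_\rho^+$ (twist by $E$) to $\Rop_\rho^-$ (twist by $F$) through the potential; its index is $\relind(X^\rm o;E,F)$, which is nonzero exactly under Condition~\eqref{C:relative higher index}, i.e.\ $\alpha(Y)\neq\ind\bigl(\spind_{Y,\underline{C^\ast\Gamma}}\bigr)$. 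That condition is strictly stronger than $\alpha(Y)\neq 0$ and is vacuous for simply connected $Y$ (e.g.\ a $K3$ surface has $\alpha\neq 0$ but no relative obstruction), consistent with Gromov's remark that the conjecture is probably vacuous in the simply connected case; your claimed theorem under $\alpha(Y)\neq 0$ alone cannot be reached by this method.

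Two secondary points. In the analytic step your existence criterion for the profile is reversed: with $-f'<\tfrac{n\sigma}{4(n-1)}+f^2$, a descent from $+\infty$ to $0$ forces $R>\int_0^\infty\bigl(\tfrac{n\sigma}{4(n-1)}+u^2\bigr)^{-1}\dd u=\pi\sqrt{(n-1)/(n\sigma)}$, and it is for $R$ \emph{larger} than this value that such a profile exists and yields invertibility; only with this corrected direction does the contradiction give $R\leq\pi\sqrt{(n-1)/(n\sigma)}$. Finally, letting $f\to+\infty$ at $\partial X$ does not by itself resolve the incompleteness of $X^\rm o$; the paper instead keeps the potential bounded and uses an admissible rescaling function $\rho\sim\sqrt{t}$ near the boundary (Proposition~\ref{P:admissible criterion}, Lemma~\ref{L:diff inequality}), and the Friedrich refinement must then be applied to the rescaled operator (Proposition~\ref{P:rescaled Lichnerowitcz}), which changes the quantitative inequality your profile must satisfy.
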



Let us now consider a second situation related to the long neck principle.
Let $N$ be a closed manifold.
A band over $N$ is a manifold $V$ diffeomorphic to $N\times [-1,1]$.
If $g$ is a Riemannian metric on $V$, we say that $(V,g)$ is a Riemannian band over $N$ and define the width of $V$ by setting
\begin{equation}
    \width(V)\defeq\dist(\partial_-V,\partial_+V),
\end{equation}
where $\partial_-V$ and $\partial_+V$ are the boundary components of $V$ corresponding respectively to $N\times\{-1\}$ and $N\times\{1\}$.
Recently, Gromov proposed the following conjecture~\cite[Conjecture~C,~11.12]{Gro18}.


\begin{conjecture}\label{C:Gromov band width}
Let $N$ be a closed manifold of dimension $n-1\geq 5$ which does not admit a metric of positive scalar curvature.
Suppose $V$ is a Riemannian band over $N$ whose scalar curvature is bounded from below by a constant $\sigma>0$.
Then
\begin{equation}\label{E:sharp band width}
    \width(V)\leq 2\pi\sqrt{\frac{n-1}{\sigma n}}.
\end{equation}
\end{conjecture}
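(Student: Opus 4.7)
The plan is to prove the conjecture by contradiction, using Gromov's $\mu$-bubble technique on the band $V$ followed by a descent argument to $N$. Assume $L\defeq \width(V) > L_0\defeq 2\pi\sqrt{(n-1)/(\sigma n)}$. First I would fix a smooth function $\rho\colon V\to[-L/2,L/2]$ with $\rho^{-1}(\pm L/2)=\partial_\pm V$ and $\abs{\dd\rho}\leq 1$, together with a warping profile $\phi(\rho)$ positive on the interior slab $\{\abs{\rho}<L_0/2\}$ and vanishing at $\rho=\pm L_0/2$. The precise form of $\phi$ is dictated by the requirement that the stability inequality for the resulting $\mu$-bubble, combined with the scalar curvature lower bound $\scal_V\geq\sigma$, produce positive scalar curvature on the bubble; the sharp value $L_0$ arises as the conjugate-point threshold of the associated Sturm--Liouville problem, and since $L>L_0$ such a $\phi$ exists with strictly positive interior.

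Next I would consider the prescribed-mean-curvature functional
\begin{equation*}
\mathcal{A}(\Omega)\defeq \mathcal{H}^{n-1}\!\left(\partial^*\Omega\cap\mathrm{int}\,V\right)+(n-1)\int_\Omega \frac{\phi'(\rho)}{\phi(\rho)}\,\dd\mathrm{vol}_g
\end{equation*}
over Caccioppoli sets $\Omega\subset V$ containing a collar of $\partial_-V$ and disjoint from a collar of $\partial_+V$, and extract a smooth minimizing $\mu$-bubble $\Sigma=\partial^*\Omega_0$ in the interior of $V$. The blow-up of $\phi'/\phi$ at $\rho=\pm L_0/2$ acts as a barrier keeping $\Sigma$ inside the slab $\{\abs{\rho}<L_0/2\}$, and interior regularity is standard for $n\leq 7$ (with an N.~Smale--type perturbation needed in higher dimensions). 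Writing out the first and second variations of $\mathcal{A}$ at $\Sigma$, using the Gauss equation and the scalar curvature bound on $V$, and performing the conformal change $\tilde g_\Sigma\defeq \phi^{4/(n-2)}g|_\Sigma$, one finds that $(\Sigma,\tilde g_\Sigma)$ carries strictly positive scalar curvature.

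Finally I would close with a descent step: since $\Sigma$ separates $\partial_-V$ from $\partial_+V$, it is homologous to a slice $N\times\{0\}$, and composition with the projection $V\to N$ yields a degree-one map $\Sigma\to N$. To deduce that $N$ itself admits positive scalar curvature (contradicting the hypothesis), I would invoke the hypothesis $\dim N=n-1\geq 5$, which is tailored to permit Gromov--Lawson--Schoen--Yau surgeries of codimension $\geq 3$, each of which preserves the existence of positive scalar curvature. The principal obstacle is precisely this surgery-theoretic step: in full generality, the existence of a degree-one map with positive scalar curvature source is not known to imply positive scalar curvature on the target, and at present such a descent is rigorous only under additional topological assumptions on $N$ (e.g.\ simple connectivity, or vanishing of a suitable higher index obstruction). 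Overcoming this last step, without any spin or index assumption, is the main point where new ideas beyond the Dirac-operator methods of the present paper would be required to settle the conjecture unconditionally.
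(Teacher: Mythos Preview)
The statement you are attempting is Conjecture~\ref{C:Gromov band width}, which the paper \emph{does not prove} in full generality; it is explicitly left as an open conjecture. What the paper does establish is Theorem~\ref{T:sharp band width}, the special case where $N$ is spin with nonvanishing Rosenberg index $\alpha(N)\in\KO_{n-1}(C^\ast\Gamma)$, and from this it deduces Corollary~E (the simply connected case). The paper's method is entirely Dirac-theoretic: one constructs a generalized Callias-type operator $\Bop_{\rho,\psi}^{\mathcal{L}_V}$ on the incomplete interior $V^\rm o$, proves that its index equals $\alpha(N)$, and then shows via a rescaled Lichnerowicz inequality that if $\width(V)>2\pi\sqrt{(n-1)/(n\sigma)}$ the operator is invertible, forcing $\alpha(N)=0$.

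Your proposal takes a genuinely different route, the $\mu$-bubble technique, and you are candid that it does not close. The first two steps of your outline (constructing the warped functional, extracting a stable $\mu$-bubble $\Sigma$ with induced positive scalar curvature) are in the spirit of Gromov's own approach and are essentially sound in low ambient dimensions; note however that for $n-1\geq 5$ one has $n\geq 6$, and geometric-measure-theoretic regularity of the bubble is only automatic for $n\leq 7$, so your parenthetical about higher dimensions hides a substantial issue. The decisive gap is exactly the one you name: from a degree-one map $\Sigma\to N$ with $\scal_\Sigma>0$ one cannot, in general, conclude that $N$ admits positive scalar curvature. No surgery argument bridges this without further hypotheses (spin plus an index obstruction, or simple connectivity combined with Stolz's theorem), and this is precisely why the paper imposes the Rosenberg-index hypothesis and why the full conjecture remains open. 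Your final paragraph correctly diagnoses this, so your proposal is not a proof of the conjecture but rather a sketch of the known $\mu$-bubble strategy together with an honest account of where it stalls.
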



\begin{remark}\label{R:optimality}
In general, one can ask whether, under the same hypotheses of Conjecture~\ref{C:Gromov band width}, there exists a constant $c_n$, depending only on the dimension $n$ of the manifold $N$, such that the inequality
\begin{equation}\label{E:weak band width}
    \width(V)\leq \frac{c_n}{\sqrt\sigma }
\end{equation}
holds.
Gromov proved~\cite[Optimality~of~$2\pi/n$,~page~653]{Gro18} that the constant
\[c_n=2\pi\sqrt{\frac{n-1}{ n}}\] 
is optimal.
\end{remark}


\subsection{Codimension zero obstructions}
Let $(X,g)$ be a compact $n$-dimensional Riemannian spin manifold with boundary whose scalar curvature is bounded from below by a constant $\sigma>0$.
The first main result of this paper consists in a ``long neck principle'' in this setting.
Our method is based on the analysis of the incomplete Riemannian manifold $X^\rm o=X\setminus\partial X$.
The topological information is encoded by a pair of bundles with metric connections $E$ and $F$ over $X^\rm o$ which have isomorphic typical fibers and are trivializable outside a compact submanifold with boundary $L\subset X^\rm o$.
Our topological invariant is given by the index of a twisted spin Dirac operator $\D_{L_D}^{E,F}$ on the double $L_\rm D$ of $L$, constructed using the pair $(E,F)$.

In order to relate this invariant to the geometry of the manifold $X$, we make use of extra data.
We use the distance function from the deleted boundary $\partial X$ of $X^\rm o$ to construct a rescaling function $\rho$ in such a way that the Dirac operator of $X^\rm o$, rescaled by the function $\rho$, is essentially self-adjoint.
We also make use of a potential, i.e. a smooth function $\phi\colon X^\rm o\to[0,\infty)$ which vanishes on $L$ and is locally constant in a neighborhood of the deleted boundary $\partial X$.
Using these extra data, we construct a Fredholm operator $\Pop_{\rho,\phi}^{E,F}$ on $X^\rm o$ whose index coincides with the index of $\D_{L_D}^{E,F}$.
A vanishing theorem for the operator $\Pop_{\rho,\phi}^{E,F}$ allows us to give conditions on $\scal_g$ and $\dist(K,\partial V)$ in such a way that the index of $\D_{L_\rm D}^{E,F}$ must vanish.
Our method can be regarded as an extension to a certain class of incomplete manifolds of the technique of Gromov and Lawson~\cite{GL80,GL83}.


\begin{thmx}\label{T:LNP}
Let $(X,g)$ be a compact $n$-dimensional Riemannian spin manifold with boundary.
Let $f\colon X\to S^n$ be a smooth strictly area decreasing map.
If $n$ is odd, we make the further assumption that $f$ is constant in a neighborhood of $\partial X$.
Suppose that the scalar curvature of $g$ is bounded from below by a constant $\sigma>0$.
Moreover, suppose that
\begin{equation}\label{E:ALN1}
    \scal_g\geq n(n-1)\qquad\text{on }\supp(\dd f)
\end{equation}
and
\begin{equation}\label{E:ALN2}
    \dist\bigl(\supp(\dd f),\partial X\bigr)>\pi\sqrt{\frac{n-1}{n\sigma}}.
\end{equation}
Then $\deg(f)=0$.
\end{thmx}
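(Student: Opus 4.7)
The plan is to instantiate the general framework sketched in the introduction with topological data manufactured from the map $f$ itself, producing a Fredholm operator whose index is a nonzero multiple of $\deg(f)$, and then to force that index to vanish via a Weitzenbock computation whose critical constant matches (\ref{E:ALN2}).

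\textbf{Topological data.} Assume first that $n$ is even. Take $E \defeq f^{\ast}\spinor^{+}(S^{n},g_{0})$ and $F \defeq f^{\ast}\spinor^{-}(S^{n},g_{0})$, the pullbacks of the half-spinor bundles of the round sphere, with their pulled-back Hermitian metrics and connections. Since $f$ is locally constant on each connected component of $X\setminus\supp(\dd f)$, the bundles $E$ and $F$ are flat with a canonical parallel isomorphism $E\cong F$ on this complement; in particular this holds on a neighborhood of $\partial X$. Using (\ref{E:ALN2}), pick a compact submanifold-with-boundary $L\subset X^{\mathrm{o}}$ with $\supp(\dd f)\subset\mathrm{int}(L)$ and $d_{0}\defeq\dist(L,\partial X)>\pi\sqrt{(n-1)/(n\sigma)}$. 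Because $[\spinor^{+}]-[\spinor^{-}]$ is the Bott generator of $\widetilde{K}^{0}(S^{n})$, a relative-index calculation on the double $L_{\mathrm{D}}$ identifies $\ind\D^{E,F}_{L_{\mathrm{D}}}$ with $\deg(f)$ up to a fixed nonzero normalization. When $n$ is odd, the hypothesis that $f$ is constant near $\partial X$ allows us to perform the analogous construction after stabilizing by $S^{1}$, reducing to the even case.

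\textbf{Rescaling and potential.} Let $\rho\colon X^{\mathrm{o}}\to(0,\infty)$ be a smooth function of $\dist(\cdot,\partial X)$, equal to $1$ away from a thin collar of $\partial X$ and decaying fast enough there to render the rescaled Dirac operator essentially self-adjoint on $X^{\mathrm{o}}$. Define the potential $\phi\colon X^{\mathrm{o}}\to[0,\infty)$ to vanish on $L$ and, on the tubular neighborhood $U=\{0<\dist(\cdot,L)<d_{0}\}$, to depend only on $t=\dist(\cdot,L)$ via $\phi(t)=h(t)$, where $h$ solves the Riccati problem
\[
    h'(t)\;=\;a^{2}+h(t)^{2},\qquad h(0)=0,\qquad a\;\defeq\;\tfrac{1}{2}\sqrt{\tfrac{n\sigma}{n-1}},
\]
namely $h(t)=a\tan(at)$. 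This solution blows up precisely at $t=\pi/(2a)=\pi\sqrt{(n-1)/(n\sigma)}<d_{0}$, so $\phi$ is smooth and finite on $U$ (after a harmless slight reduction of $d_{0}$). Extend $\phi$ by a large locally constant value on the remaining collar of $\partial X$. The construction described in the introduction then produces a Fredholm operator $\Pop^{E,F}_{\rho,\phi}$ with $\ind\Pop^{E,F}_{\rho,\phi}=\ind\D^{E,F}_{L_{\mathrm{D}}}$, hence equal to a nonzero multiple of $\deg(f)$.

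\textbf{Vanishing.} The square $\bigl(\Pop^{E,F}_{\rho,\phi}\bigr)^{2}$ admits a Weitzenbock-Lichnerowicz identity of the schematic form
\[
    \bigl(\Pop^{E,F}_{\rho,\phi}\bigr)^{2}\;=\;\nabla^{\ast}\nabla\;+\;\rho^{2}\Bigl(\tfrac{\scal_{g}}{4}+\calR^{E,F}\Bigr)\;+\;\phi^{2}\;-\;\rho\,\cc(\dd\phi)\;+\;\text{(lower order)},
\]
where $\calR^{E,F}$ is the twisting-curvature endomorphism. Since $\calR^{E,F}$ is supported on $\supp(\dd f)$ and $f$ is strictly area decreasing with some factor $\epsilon<1$, while the spinor curvature of $(S^{n},g_{0})$ equals $n(n-1)/4$, we get $|\calR^{E,F}|\leq\epsilon\,n(n-1)/4$ there; combined with (\ref{E:ALN1}) this yields strict positivity of $\scal_{g}/4+\calR^{E,F}$ on $\supp(\dd f)$. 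Off $\supp(\dd f)$ but inside $L$, $\calR^{E,F}$ vanishes and $\scal_{g}\geq\sigma$. On $U$, the Riccati equation for $h$ is calibrated so that $\phi^{2}+a^{2}=a^{2}+h^{2}=h'=|\dd\phi|$, which combined with the sharp conformal/Friedrich-type refinement of the Weitzenbock estimate (producing the factor $n/(n-1)$ in front of the scalar curvature) makes $\rho^{2}\scal_{g}/4+\phi^{2}$ dominate $\rho\,\cc(\dd\phi)$, strictly on most of $U$. Hence $\Pop^{E,F}_{\rho,\phi}$ has trivial $L^{2}$-kernel, so its index vanishes, and $\deg(f)=0$.

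The main obstacle is the last positivity estimate on $U$: the constant $\pi\sqrt{(n-1)/(n\sigma)}$ is rigidly dictated by the blow-up time of $h$, so the argument works only because the naive Lichnerowicz bound $\scal/4$ can be replaced by its sharp $n/(n-1)$-refined counterpart, which is what matches the $a^{2}=n\sigma/(4(n-1))$ in the Riccati equation. The strict inequalities in the hypotheses (both $\epsilon<1$ and $>\pi\sqrt{(n-1)/(n\sigma)}$) provide the slack that prevents the argument from degenerating; the odd-dimensional case introduces a cosmetic $S^{1}$-stabilization, which is the one place where the "constant near $\partial X$" assumption is consumed.
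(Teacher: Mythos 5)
Your proposal follows the same route as the paper's proof: a Gromov--Lawson/Llarull pair of bundles with essential support $\supp(\dd f)$ whose relative index detects $\deg(f)$ (the paper's Example~\ref{E:GL relative}; your symmetric choice $E=f^\ast\spinor^+$, $F=f^\ast\spinor^-$ is a harmless variant, provided you check the Llarull curvature bound for \emph{both} twisting endomorphisms $\cal R^E$ and $\cal R^F$, since the bounding function must dominate both), a rescaled Dirac operator with a Callias-type potential calibrated by the Friedrich-refined Lichnerowicz estimate, the identification of its index with the relative index (Theorems~\ref{T:Fredholm} and~\ref{T:index}), and the $S^1$-stabilization in odd dimensions.

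There is, however, a genuine gap in the vanishing step, located exactly where the rescaling function degenerates. The correct lower bound is Lemma~\ref{L:LWS}: for every $\omega>0$,
\[
\bigl<\Pop^2_{\rho,\phi}w,w\bigr>\;\geq\;\bigl<\Phi^{\omega,\nu}_{\rho,\phi}w,w\bigr>,\qquad w\in\Gamma_\rm c(X^\rm o;W),
\]
where $\Phi^{\omega,\nu}_{\rho,\phi}$ is the function~\eqref{E:Phi}; the term $-\omega\rho^2\abs{\dd\rho}^2$ and the prefactors $\rho^4$, $\rho^2$ are precisely what your ``(lower order)'' hides, yet they decide the issue in the collar of $\partial X$: there the scalar-curvature contribution dies like $\rho^4$ while $-\omega\rho^2\abs{\dd\rho}^2$ is genuinely negative, so positivity can only come from $\phi^2$, forcing the constant value of $\phi$ near $\partial X$ to be large compared with $\sqrt{\omega}\,\sup\bigl(\rho\abs{\dd\rho}\bigr)$ (with your profile, $\rho\equiv1$ outside a collar of width $\epsilon$, this supremum is of order $1/\epsilon$; with the paper's profile $\rho\sim\sqrt{t}$ it equals $1/2$, whence the condition $Y_0^2>\omega/4$ in Lemma~\ref{L:potential}). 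Your argument verifies positivity only on $L$ and on the neck and never coordinates the choices of $\omega$, the collar width, and the cap height. Relatedly, your remark that the Riccati solution blows up at $\pi/(2a)<d_0$ ``so $\phi$ is smooth and finite on $U$ after a harmless slight reduction of $d_0$'' is backwards: the blow-up lies \emph{inside} the neck and shrinking $d_0$ does not avoid it; what is needed is to cut $h$ off before blow-up at a height exceeding the threshold just described and extend by that constant, and it is exactly because the blow-up time of the effective Riccati equation (whose coefficient is $\frac{\omega}{1+\omega}\bar n\bar\sigma<a^2$, which is where the strict inequality in~\eqref{E:ALN2} and a large choice of $\omega$ are consumed) is strictly smaller than the neck length that an arbitrarily high cap can be reached before entering the collar. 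Supplying these points turns your sketch into the paper's proof of Theorem~\ref{T:GLN} (Lemmas~\ref{L:potential} and~\ref{L:diff inequality}) applied as in Subsection~\ref{SS:LNP}.
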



\begin{remark}
Theorem~\ref{T:LNP} answers Question~\ref{Q:LNP} when $X$ is spin and even dimensional and $f$ is strictly area decreasing.
The case when $f$ is area decreasing can be treated with a slight modification of the techniques presented in this paper and will be discussed in a separated paper.
\end{remark}


\begin{remark}
Condition~\eqref{E:ALN2} implies that $f$ is constant in a neighborhood of each connected component of $\partial X$ so that the degree of $f$ is well defined.
The extra assumption when $n$ is odd is needed, at least with the argument used in this paper, to reduce the odd-dimensional case to the even-dimensional case.
We believe it is possible to drop this extra assumption.
\end{remark}


\begin{remark}
It is an interesting question whether, in dimension at most eight, it is possible to drop the spin assumption from Theorem~\ref{T:LNP} by using the minimal hypersurface technique of Schoen and Yau~\cite{SY79}.
In fact, it is not clear whether this method can be used to approach the long neck problem, due to the difficulties, pointed out in~\cite{CS19}, arising when the minimal hypersurface technique is used to treat maps that are area contracting.
\end{remark}


We now consider a higher version of the long neck principle.
Let $Y$ be a closed $n$-dimensional spin manifold with fundamental group $\Gamma$.
There is a canonical flat bundle $\mathcal{L}_Y$ over $Y$, called the Mishchenko bundle of $Y$, whose typical fiber is $C^\ast\Gamma$, the maximal real group $C^\ast$-algebra of $\Gamma$.
The Rosenberg index~\cite{Ros83,Ros86a,Ros86b} of $Y$ is the class $\alpha(Y)\in \KO_n(C^\ast\Gamma)$, obtained as the index of the spin Dirac operator twisted with the bundle $\mathcal{L}_Y$.
Here, $\KO_n(C^\ast\Gamma)$ is the real $K$-theory of $C^\ast\Gamma$.
The class $\alpha(Y)$ is the most general known obstruction to the existence of metrics of positive scalar curvature on $Y$.
Denote by $\spind_{Y,\underline{C^\ast\Gamma}}$ the spin Dirac operator twisted with the bundle $\underline{C^\ast\Gamma}$, the trivial bundle on $Y$ with typical fiber $C^\ast\Gamma$.
We assume that
\begin{texteqn}\label{C:relative higher index}
    the Rosenberg index $\alpha(Y)$ does not coincide with the index of $\spind_{Y,\underline{C^\ast\Gamma}}$.
\end{texteqn}


\begin{remark}
From results of Hanke and Schick~\cite{HS06,HS07}, closed enlargeable spin manifolds satisfy Condition~\eqref{C:relative higher index}.
For the notion of enlargeable manifold, see~\cite[\S IV.6]{LM89}.
Examples of closed enlargeable manifolds are the $n$-torus $T^n$ and any closed spin manifold admitting a metric of nonpositive sectional curvature.
Moreover, if $M_1$ and $M_2$ are closed spin manifolds and $M_1$ is enlargeable, then the connected sum $M_1\# M_2$ is enlargeable as well.
This provides us with a large class of examples satisfying Condition~\eqref{C:relative higher index}.
For more details and examples of enlargeable manifolds, we refer the reader to~\cite[Section~5]{GL83} and~\cite[\S~IV.6]{LM89}.
\end{remark}


\begin{remark}
An example of a manifold which is not enlargeable and satisfies Condition~\eqref{C:relative higher index} is given by $T^4\times N$, with $N$ a $K3$ surface.
\end{remark}


\begin{remark}
Another interesting class of manifolds satisfying Condition~\eqref{C:relative higher index} consists in aspherical spin manifolds whose fundamental group satisfies the strong Novikov conjecture.
\end{remark}


We use Condition~\eqref{C:relative higher index} to establish a ``higher neck principle''.
Let $D_1,\ldots,D_N$ be pairwise disjoint disks embedded in $Y$.
Consider the compact manifold with boundary
\[
    X\defeq Y\setminus\bigl(D_1^\rm o\sqcup\ldots D_N^\rm o\bigr),
\]
where $D_j^\rm o$ is the interior of $D_j$.
Observe that the boundary of $X$ is the disjoint union $\partial X=S^{n-1}_1\sqcup\cdots\sqcup S^{n-1}_N$, where $S^{n-1}_j\defeq\partial D_j$.
If $g$ is a Riemannian metric on $X$, the \emph{normal focal radius} of $\partial X$, denoted by $\rad^\odot_g(\partial X)$, is defined as follows.
For $R>0$ small enough, denote by $B_R(S^{n-1}_j)$ the geodesic collar neighborhood of $S^{n-1}_j$ of width $R$.
Define $\rad^\odot_g(\partial X)$ as the supremum of the numbers $R>0$ such that there exist pairwise disjoint geodesic collar neighborhoods $B_R(S^{n-1}_1),\ldots,B_R(S^{n-1}_N)$.


\begin{thmx}\label{T:HLN}
Let $Y$, $\Gamma$ and $X$ be as above.
Suppose the Rosenberg index $\alpha(Y)$ does not coincide with the index of $\spind_{Y,\underline{C^\ast\Gamma}}$.
Moreover, suppose $g$ is a Riemannian metric on $X$ whose scalar curvature is bounded from below by a constant $\sigma>0$.
Then
\begin{equation}\label{E:HLN1}
    \rad^\odot_g(\partial X)\leq\pi\sqrt{\frac{n-1}{n\sigma}}.
\end{equation}
\end{thmx}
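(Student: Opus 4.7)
The plan is to argue by contradiction, constructing on $X^\rm o$ a pair of bundles $(E,F)$ whose associated Fredholm operator $\Pop_{\rho,\phi}^{E,F}$ has nonzero index coming from the Rosenberg invariant, and then invoking the author's vanishing theorem. Suppose for contradiction that $\rad^\odot_g(\partial X)>\pi\sqrt{(n-1)/(n\sigma)}$, and choose $R$ strictly between these two numbers so that pairwise disjoint geodesic collar neighborhoods $B_R(S^{n-1}_j)$ exist. Define
\[
L\defeq X\setminus\bigsqcup_{j=1}^N B_R(S^{n-1}_j).
\]
Then $L$ is a compact codimension-zero submanifold with boundary contained in $X^\rm o$, and $\dist(L,\partial X)\geq R>\pi\sqrt{(n-1)/(n\sigma)}$.

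For the bundle data, set $E\defeq\mathcal{L}_Y\big|_{X^\rm o}$ and $F\defeq\underline{C^\ast\Gamma}\big|_{X^\rm o}$; both are flat Hilbert $C^\ast\Gamma$-module bundles with common typical fiber $C^\ast\Gamma$. Each collar $B_R(S^{n-1}_j)$ deformation retracts to $S^{n-1}_j$, which for $n\geq 3$ is simply connected, so the flat bundle $E$ trivialises there. Fix identifications $E\cong F$ on each collar, so that $(E,F)$ falls within the general framework described in the introduction. Let $L_\rm D$ denote the double of $L$, and let $\mathcal W$ be the $C^\ast\Gamma$-bundle on $L_\rm D$ obtained by gluing $E|_L$ on one copy and $F|_L$ on the other along $\partial L$ via these identifications.

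The key index computation is that
\[
\ind\bigl(\D^{E,F}_{L_\rm D}\bigr)\;=\;\alpha(Y)\,-\,\ind\bigl(\spind_{Y,\underline{C^\ast\Gamma}}\bigr)\quad\text{in}\quad\KO_n(C^\ast\Gamma).
\]
To see this, fill each copy of $L$ by the removed collars together with the disks $D_j$: each half reassembles to a closed manifold diffeomorphic to $Y$, on which $E$ extends to $\mathcal{L}_Y$ and $F$ extends to $\underline{C^\ast\Gamma}$; a Gromov--Lawson-type relative index theorem, in its $KO_\ast(C^\ast\Gamma)$-valued form for Hilbert $C^\ast$-module coefficient bundles, then identifies the index on the double with the difference of the two closed-manifold indices. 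By the hypothesis of the theorem this class is nonzero.

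To conclude, construct the rescaling function $\rho$ from the distance to $\partial X$ and a potential $\phi\colon X^\rm o\to[0,\infty)$ that vanishes on $L$ and is locally constant near $\partial X$, as in the author's general framework, so that $\ind(\Pop_{\rho,\phi}^{E,F})=\ind(\D^{E,F}_{L_\rm D})\neq 0$. Because $\scal_g\geq\sigma$ throughout $X$ and $\dist(L,\partial X)>\pi\sqrt{(n-1)/(n\sigma)}$, the interpolation interval on each collar is wide enough to choose $\phi$ so that the potential contribution dominates the twisted Lichnerowicz remainder $\tfrac14\scal_g$ pointwise, forcing the index to vanish. This contradiction proves the bound. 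The main obstacle is the relative index identification in the displayed equation, which requires a bordism/gluing argument for the spin Dirac operator with Hilbert $C^\ast$-module coefficients; the analytic vanishing step parallels the proof of Theorem~\ref{T:LNP} and hinges on the same sharp Hardy-type estimate responsible for the constant $\pi\sqrt{(n-1)/(n\sigma)}$.
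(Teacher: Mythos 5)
Your proposal is correct and follows essentially the same route as the paper: you take the flat pair given by the Mishchenko bundle and the trivial $C^\ast\Gamma$-bundle on $X^\rm o$ (the paper's Example~\ref{E:higher relative}), identify $\ind\bigl(\D^{E,F}_{L_\rm D}\bigr)=\alpha(Y)-\ind\bigl(\spind_{Y,\underline{C^\ast\Gamma}}\bigr)\neq 0$ via the cut-and-paste additivity formula (Theorem~\ref{T:relative}), observe that the complement of the disjoint collars of width $R$ is an essential support at distance $R$ from $\partial X$, and then invoke the vanishing theorem for the generalized Gromov--Lawson operator built from the distance function (Theorem~\ref{T:GLN}), exactly as in Subsection~\ref{SS:HLN}. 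The only cosmetic differences are the contradiction framing (the paper bounds every $R<\rad^\odot_g(\partial X)$ directly), trivializing $\mathcal L_Y$ over the simply connected collars (valid for $n\geq 3$) instead of the paper's collapsing-map pullback, and a loose phrase about the potential ``dominating'' $\tfrac14\scal_g$ (in the actual estimate the scalar-curvature term is a positive contribution while the potential must dominate the derivative terms of $\rho$ and $\phi$), none of which affects the argument.
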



In view of Conjecture~\ref{C:punctured torus}, it is natural to consider, under Condition~\eqref{C:relative higher index}, the manifold $Y$ with $N$ points removed and ask whether it admits complete metrics of positive scalar curvature.


\begin{thmx}\label{T:complete metrics on torus}
Let $Y$ be a closed spin manifold with fundamental group $\Gamma$ and let $P_1,\ldots,P_N$ be distinct points in $X$.
Suppose the Rosenberg index $\alpha(Y)$ does not coincide with the index of $\spind_{Y,\underline{C^\ast\Gamma}}$.
Then the open manifold $M\defeq Y\setminus\left\{P_1,\ldots,P_N\right\}$ cannot carry any complete metric of positive scalar curvature.
\end{thmx}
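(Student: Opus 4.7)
The plan is to argue by contradiction using the Fredholm-index construction from the paper's codimension-zero obstruction section, applied to an exhaustion of $M$ by compact submanifolds with boundary, exploiting completeness of $g$ to produce an arbitrarily long neck between a fixed topological bulk and a receding boundary.

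Suppose $g$ is a complete metric on $M$ with $\scal_g>0$ everywhere. Fix pairwise disjoint embedded closed disks $D_1,\dots,D_N\subset Y$ with $P_i\in D_i^{\rm o}$ and set $L\defeq Y\setminus\bigcup_i D_i^{\rm o}$, a fixed compact submanifold with boundary of $M$; by compactness, $\sigma_L\defeq\min_L\scal_g>0$. On $M$, take the flat bundles $E\defeq\mathcal{L}_Y|_M$ (Mishchenko) and $F\defeq\underline{C^\ast\Gamma}|_M$ (trivial). Contractibility of each $D_i$ in $Y$ trivialises $\mathcal{L}_Y|_{D_i}$ and yields an identification $E\cong F$ over $M\setminus L$. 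By the relative index theorem established in the paper, the index of the twisted Dirac operator $\D^{E,F}_{L_\mathrm D}$ on the double $L_\mathrm D$ of $L$ equals the class $\alpha(Y)-\mathrm{ind}(\spind_{Y,\underline{C^\ast\Gamma}})\in\KO_n(C^\ast\Gamma)$, which is non-zero by hypothesis.

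Now exhaust $M$ by compact submanifolds with boundary $X_k\defeq Y\setminus\bigcup_i B_{\epsilon_k}(P_i)\supset L$ for $\epsilon_k\to 0$. Because $g$ is complete and the punctures lie at metric infinity, $\dist_g(L,\partial X_k)\to\infty$. For each $k$, apply the paper's construction to $(X_k,g|_{X_k})$ with the data $(L,E,F)$: choose a rescaling function $\rho_k$ vanishing at $\partial X_k$ and a potential $\phi_k$ that is $0$ on $L$, rises smoothly to a fixed positive constant $c$ on a fixed compact tubular neighbourhood $N$ of $\partial L$ inside $M\setminus L$ (independent of $k$), and equals $c$ on $X_k\setminus(L\cup N)$. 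The operator $\Pop^{E,F}_{\rho_k,\phi_k}$ is then Fredholm, and by the same relative-index identification its index equals that of $\D^{E,F}_{L_\mathrm D}$, so it is non-zero for every $k$.

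The contradiction will follow once I show $\Pop^{E,F}_{\rho_k,\phi_k}$ is invertible for $k$ large. Since $E$ and $F$ are flat, the relative curvature term $R^{E,F}$ in the Lichnerowicz--Schrödinger identity vanishes, so positivity of the zeroth order piece reduces to controlling $\scal_g/4+\phi_k^2$ against commutator terms in $|\dd\phi_k|$ and rescaling corrections from $\rho_k$. On $L$ we have $\phi_k=0$ and $\scal_g\geq\sigma_L>0$. On the fixed compact collar $N$ we have $\scal_g\geq\sigma_N>0$ together with $|\dd\phi_k|\leq\sigma_N/(4C)$, uniformly in $k$. On $X_k\setminus(L\cup N)$, $\phi_k\equiv c$ gives $|\dd\phi_k|=0$ and a uniform positive contribution $c^2$. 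The main obstacle is to absorb the $\rho_k$-induced corrections concentrated near $\partial X_k$ into this $c^2$; this is exactly achieved by taking $\dist_g(L,\partial X_k)$ large, which is available by completeness for $k$ large. This is the same quantitative vanishing mechanism that produces the long-neck inequalities of Theorems~\ref{T:LNP} and~\ref{T:HLN}, here applied with $L$ fixed and $k\to\infty$ rather than with a fixed manifold and a uniform lower bound on $\scal_g$; the crucial point that makes pointwise (rather than uniform) positivity of $\scal_g$ sufficient is the flatness of the Mishchenko bundle, which localises the need for a quantitative lower bound on $\scal_g$ to the fixed compact set $L\cup N$.
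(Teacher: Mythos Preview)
Your approach works in spirit but takes an unnecessarily circuitous route, and the justification at the key step is not quite right.

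The paper's proof is much more direct: it works on the complete manifold $M$ itself, with no exhaustion and no rescaling. Since $g$ is complete, the constant function $\rho=1$ is an admissible rescaling function (Remark~\ref{R:complete manifolds}), so all $\rho$-induced correction terms disappear. One fixes a single compatible potential $\phi$ (zero near an essential support $K$, equal to $1$ near infinity) and considers the one-parameter family $\Pop_{1,\lambda\phi}$. The Lichnerowicz-type estimate then reduces to showing
\[
\Phi_\lambda(x)=\tfrac14\scal_g(x)+\lambda^2\phi^2(x)-\lambda\abs{\dd\phi_x}
\]
is uniformly positive. On the relatively compact set where $\dd\phi$ is supported, $\scal_g\geq\sigma>0$ by compactness, so $\Phi_\lambda\geq\sigma/4-\lambda\norm{\dd\phi}_\infty>0$ for $\lambda$ small; where $\phi\equiv1$, $\Phi_\lambda\geq\lambda^2$. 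That is the entire argument: completeness removes the rescaling, and scaling the \emph{potential} (not the neck) balances the gradient term against the scalar curvature on a fixed compact set.

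Your exhaustion by $X_k$ and the rescaling functions $\rho_k$ reintroduce exactly the complication that completeness eliminates. More importantly, your claim that ``taking $\dist_g(L,\partial X_k)$ large'' absorbs the $\rho_k$-corrections is not correct as stated. Admissibility forces $\rho_k(x)\sim d\sqrt{\tau(x)}$ near $\partial X_k$ (Proposition~\ref{P:admissible criterion}), so in that boundary layer $\rho_k^2\abs{\dd\rho_k}^2\sim d^4/4$ is bounded below by a constant \emph{independent of $k$ and of the neck length}. Absorbing this into $c^2$ therefore requires a constraint of the form $c^2>\omega d^4/4$, which is a condition on the parameters $c,\omega,d$, not on the distance. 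You never verify that this is compatible with your earlier choice of $c$ (which was constrained by $\abs{\dd\phi_k}\leq\sigma_N/(4C)$ on the fixed collar $N$). The tension is resolvable, but not by letting $k\to\infty$; once the parameters are chosen correctly, a single $X_k$ already suffices and the exhaustion plays no role.
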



\begin{remark}
This theorem can be thought of as a ``codimension zero'' version of~\cite[corollary~B]{Ce18} and is proved with similar methods.
Theorem~\ref{T:complete metrics on torus} can also be regarded as a ``higher version'' of~\cite[Theorem~1.1]{Zha19}.
\end{remark}


\begin{remark}
When $N=1$, Theorems~\ref{T:HLN} and~\ref{T:complete metrics on torus} imply that Conjecture~\ref{C:punctured torus} holds with constant $c=\pi\sqrt{(n-1)/n}$ for all closed $n$-dimensional spin manifolds satisfying Condition~\eqref{C:relative higher index}.
\end{remark}


\begin{remark}
When $Y$ is simply connected, Condition~\eqref{C:relative higher index} is vacuous and Theorems~\ref{T:HLN} and~\ref{T:complete metrics on torus} are vacuous as well.
The geometric interpretation of this fact could be related to the observation of Gromov~\cite[page~723]{Gro19} that Conjecture~\ref{C:punctured torus} is probably vacuous for simply connected manifolds.
\end{remark}


\subsection{Codimension one obstructions}
Let us now consider an $n$-dimensional Riemannian band $(V,g)$ over a closed spin manifold $N$.
Let $\partial_\pm V$ and $\width(V)$ denote the same objects as in Subsection~\ref{SS:some questions by Gromov}.
In this case, our obstruction is the Rosenberg index of the $(n-1)$-dimensional spin manifold $N$.
In analogy with the case of codimension zero obstructions, we consider the incomplete manifold $V^\rm o=V\setminus\partial V$ and fix a rescaling function $\rho$ and a potential $\psi$.
We also assume that $\psi$ is compatible with the band $V$.
This means that there exist constants $\lambda_-<0<\lambda_+$ such that $\psi=\lambda_-$ in a neighborhood of the deleted negative boundary component $\partial_-V$ and $\psi=\lambda_+$ in a neighborhood of the deleted positive boundary component $\partial_+V$.
We use these extra data to construct a Fredholm operator $\Bop_{\rho,\psi}$ on $V^\rm o$ whose index coincide with $\alpha(N)$.
From a vanishing theorem for the index of the operator $\Bop_{\rho,\psi}$, we deduce the following result.


\begin{thmx}\label{T:sharp band width}
Let $N$ be a closed $(n-1)$-dimensional spin manifold with fundamental group $\Gamma$.
Suppose the Rosenberg index $\alpha(N)\in\KO_n(C^\ast\Gamma)$ does not vanish.
Let $V$ be a Riemannian band over $N$ whose scalar curvature is bounded from below by a constant $\sigma>0$.
Then
\begin{equation*}
    \width(V)\leq 2\pi\sqrt{\frac{n-1}{\sigma n}}.
\end{equation*}
\end{thmx}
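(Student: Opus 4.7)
The plan is proof by contradiction via the Callias-type operator $\Bop_{\rho,\psi}$ constructed in the preceding discussion. Suppose $L\defeq \width(V) > 2\pi\sqrt{(n-1)/(n\sigma)}$, pick $L'$ strictly between these two values, and set $c\defeq \pi/L'$. Since $V$ is diffeomorphic to $N\times[-1,1]$ it is homotopy equivalent to $N$, so the Mishchenko bundle $\calL_N$ pulls back to a flat $C^\ast\Gamma$-bundle $\calL_V\to V$ along any retraction; I will twist the spin Dirac operator on $V^\rm o=V\setminus\partial V$ with $\calL_V$.

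Next I assemble the geometric data demanded by the construction of $\Bop_{\rho,\psi}$: a smooth ``signed transverse coordinate'' $t\colon V\to[-L/2,L/2]$ with $|\grad t|\leq 1$, equal to the signed distance to the midlevel in collars of $\partial_\pm V$; a rescaling function $\rho\colon V^\rm o\to(0,\infty)$ vanishing at $\partial V$ at the rate required to make $\rho\,\spind_{V,\calL_V}$ essentially self-adjoint; and the potential $\psi=\phi\circ t$ where $\phi(s)=c\tan(cs)$ on a subinterval $[-L'/2+\delta,L'/2-\delta]$ and is smoothly extended on either side to large constants $\lambda_\pm$ of opposite sign in neighborhoods of $\pm L/2$. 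This $\psi$ is compatible with the band in the sense of the paper. The defining identity $\phi'=c^2+\phi^2$ together with $|\grad t|\leq 1$ yields the key pointwise estimate
\[
    \psi^2 \;-\; \abs{\grad\psi}\;\geq\;-c^2\;=\;-\frac{\pi^2}{L'^2}
\]
on the $\tan$-regime, a nonnegative estimate $\psi^2-|\grad\psi|=\lambda_\pm^2$ in the constant regime, and one can make the transition region arbitrarily small so as not to spoil these bounds.

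By the preceding construction, $\Bop_{\rho,\psi}$ is Fredholm with $\Ind\Bop_{\rho,\psi}=\alpha(N)$ in $\KO_n(C^\ast\Gamma)$, which is nonzero by hypothesis. On the other hand, the Schr\"odinger--Lichnerowicz identity for $\Bop_{\rho,\psi}^2$---in the sharp form whose $n/(n-1)$ factor the paper establishes by exploiting that $\psi$ depends only on the transverse direction, and using that $\calL_V$ is flat---gives the curvature estimate
\[
    \Bop_{\rho,\psi}^2 \;\geq\; \frac{n}{4(n-1)}\scal_g \;+\; \psi^2 \;-\; \abs{\grad\psi}\;\geq\;\frac{n\sigma}{4(n-1)}-\frac{\pi^2}{L'^2},
\]
and our choice $L'>2\pi\sqrt{(n-1)/(n\sigma)}$ renders the right-hand side a strictly positive constant. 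Hence $\Bop_{\rho,\psi}$ is invertible, which forces $\alpha(N)=0$ and contradicts the hypothesis.

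The main obstacles lie not in this packaging, which is a standard $\tan$-trick once the machinery is in hand, but in the inputs it invokes from earlier in the paper: the Fredholmness of $\Bop_{\rho,\psi}$ on the incomplete manifold $V^\rm o$ (requiring essential self-adjointness of $\rho\,\spind_{V,\calL_V}$ and a careful analysis near the deleted boundary), the index identification $\Ind\Bop_{\rho,\psi}=\alpha(N)$, and, crucially, the sharp vanishing theorem with the $n/(n-1)$ factor---without which the $\tan$-trick above would only deliver the weaker bound $\width(V)\leq 2\pi/\sqrt\sigma$ rather than the optimal constant appearing in the statement.
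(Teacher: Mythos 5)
Your overall strategy is the same as the paper's: argue by contradiction, twist the spin Dirac operator on the incomplete manifold $V^\rm o$ by the Mishchenko bundle, form the generalized Callias-type operator $\Bop_{\rho,\psi}$, identify its index with $\alpha(N)$, and kill the index by a $\tan$-type potential combined with the Friedrich improvement giving the factor $n/(n-1)$. However, there is a genuine gap at the central step: the estimate you display,
\[
    \Bop_{\rho,\psi}^2 \;\geq\; \frac{n}{4(n-1)}\scal_g + \psi^2 - \abs{\grad\psi},
\]
is not available for the rescaled operator, and it is not what the paper establishes. Because $V^\rm o$ is incomplete, the rescaling $\rho$ must degenerate at $\partial V$ (roughly like the square root of the boundary distance), and the Lichnerowicz-type bound one actually gets (Proposition~\ref{P:rescaled Lichnerowitcz} combined with~\eqref{E:Callias square}) is, for every $\omega>0$,
\[
    \bigl<\Bop_{\rho,\psi}^2 w,w\bigr>\;\geq\;\Bigl<\Bigl(\tfrac{\bar n\omega}{1+\omega}\,\rho^4\,\nscal_g-\omega\,\rho^2\abs{\dd\rho}^2+\psi^2-\rho^2\abs{\dd\psi}\Bigr)w,w\Bigr>.
\]
Near the boundary $\rho\to 0$, so the scalar curvature contribution is wiped out and an extra negative term $-\omega\rho^2\abs{\dd\rho}^2$ of size about $\omega/4$ appears; moreover the sharp coefficient $\bar n=n/(n-1)$ is only approached as $\omega\to\infty$. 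Consequently your uniform lower bound $\tfrac{n\sigma}{4(n-1)}-\pi^2/L'^2$ simply fails in a collar of $\partial V$, and the $\tan$-identity $\phi'=c^2+\phi^2$ with $c=\pi/L'$ does not by itself produce a uniformly positive potential function on all of $V^\rm o$.

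The real content of the paper's proof of Theorem~\ref{T:sharp band width} is precisely the coordinated construction that you have deferred to a nonexistent black box: $\rho$ and $\psi$ are both built from the (Greene--Wu smoothed) boundary distance via functions $Z$, $Y$ satisfying the coupled differential inequality of Lemma~\ref{L:diff inequality}, in which the $\tan$-profile is used only in the interior region where $\rho$ is a constant $\rho_0$ (its amplitude is tied to $\rho_0^2$, which is why the $\rho^2\abs{\dd\psi}$ penalty does not destroy sharpness), while near the boundary $\psi$ is frozen at a constant $\lambda_\pm$ with $\lambda_\pm^2>\omega/4$ to dominate the $-\omega\rho^2\abs{\dd\rho}^2$ term, and the sharp constant is recovered by letting $\omega$ be large and a parameter $\sigma_1<\sigma$ be close to $\sigma$, using the strict inequality $\width(V)>2\pi\sqrt{(n-1)/(n\sigma)}$. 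So your packaging is correct in outline, but as written the key inequality is wrong on the incomplete band, and repairing it requires exactly the boundary-region analysis (choice of $\lambda_\pm$ relative to $\omega$, $\rho\sim\sqrt{t}$, and the $\omega$-limiting argument) that constitutes the paper's proof; citing ``the sharp vanishing theorem with the $n/(n-1)$ factor'' does not supply it, because no such clean statement without the $\rho$- and $\omega$-corrections is proved in the paper.
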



\begin{remark}
This theorem implies that conjecture~\ref{C:Gromov band width} holds for all closed spin manifolds with nonvanishing Rosenberg index.
\end{remark}

\begin{remark}
In view of Remark~\ref{R:optimality}, the inequality found in Theorem~\ref{T:sharp band width} is sharp.
\end{remark}


\begin{remark}
Zeidler~\cite[Theorem~1.4]{Zei19} recently proved that, under the same hypotheses of Theorem~\ref{T:sharp band width}, there exists a constant $c$, independent of $n$, such that Inequality~\eqref{E:weak band width} holds.
This constant is numerically close to $20.51$: see Remark~\cite[Remark~1.9]{Zei19}.
Therefore, it is not optimal or asymptotically optimal (the asymptotically optimal constant would be $2\pi$).
Theorem~\ref{T:sharp band width} strengthens~\cite[Theorem~1.4]{Zei19} with the optimal constant.
This answers a question asked by Zeidler: see~\cite[Remark~1.9]{Zei19}.
\end{remark}


\noindent Theorem~\ref{T:sharp band width} implies the following relevant case of Gromov's Conjecture~\ref{C:Gromov band width}.


\begin{corx}
Conjecture~\ref{C:Gromov band width} holds when $N$ is a closed simply connected manifold of dimension at least $5$.
\end{corx}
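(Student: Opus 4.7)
The plan is to reduce the corollary directly to Theorem~\ref{T:sharp band width} by checking, using two classical structure results, that every simply connected closed $N$ of dimension $n-1\geq 5$ which does not admit a metric of positive scalar curvature automatically satisfies the hypotheses of that theorem, namely spin together with nonvanishing Rosenberg index.

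First I would observe that since $N$ is simply connected, $\Gamma=\pi_1(N)$ is trivial, so the maximal real group $C^\ast$-algebra is $C^\ast\Gamma=\RR$, the Mishchenko bundle $\mathcal{L}_N$ is trivial, and the Rosenberg index $\alpha(N)\in\KO_{n-1}(\RR)=\KO_{n-1}(\mathrm{pt})$ reduces to the classical $\alpha$-invariant of Milnor--Atiyah--Singer. Next I would rule out the non-spin case: by the Gromov--Lawson surgery theorem, every closed simply connected non-spin manifold of dimension at least $5$ carries a metric of positive scalar curvature. Since by assumption $N$ carries no such metric, $N$ must be spin.

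Now I would invoke Stolz's theorem, which asserts that a closed simply connected spin manifold of dimension at least $5$ admits a positive scalar curvature metric if and only if its $\alpha$-invariant vanishes. Combining this with the nonexistence assumption gives $\alpha(N)\neq 0$ in $\KO_{n-1}(\RR)$, so the Rosenberg index $\alpha(N)\in\KO_{n-1}(C^\ast\Gamma)$ is nonzero as well. The hypotheses of Theorem~\ref{T:sharp band width} are thus met for every Riemannian band $V$ over $N$ with $\scal_g\geq\sigma>0$, yielding the bound
\[
\width(V)\leq 2\pi\sqrt{\frac{n-1}{\sigma n}},
\]
which is precisely Conjecture~\ref{C:Gromov band width}. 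There is no real obstacle here: once Theorem~\ref{T:sharp band width} is in hand, the corollary is essentially a bookkeeping exercise that packages together the Gromov--Lawson non-spin result and Stolz's classification to verify the single nontrivial hypothesis, nonvanishing Rosenberg index.
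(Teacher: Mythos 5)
Your proof is correct and is essentially the argument the paper has in mind: the paper does not spell it out but refers to the same reduction used by Zeidler, namely ruling out the non-spin case via Gromov--Lawson and then applying Stolz's theorem to get $\alpha(N)\neq 0$, after which Theorem~\ref{T:sharp band width} gives the bound.
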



\begin{remark}
This corollary strengthens~\cite[corollary~1.5]{Zei19} with the optimal constant.
It follows from Theorem~\ref{T:sharp band width} by the same argument used in~\cite{Zei19} so we do not repeat it here.
\end{remark}


The paper is organized as follows.
In Section~\ref{S:additivity formula}, we prove a $K$-theoretic additivity formula for the index in the setting of manifolds complete for a differential operator.
In Section~\ref{S:admissible rescaling}, we study rescaled Dirac operators and prove a Lichnerowicz-type inequality in this situation.
In Section~\ref{S:GL}, we construct the operator $\Pop_{\rho,\phi}^{E,F}$ and prove a formula to compute its index.
In Section~\ref{S:LNP}, we prove a vanishing theorem for the operator $\Pop_{\rho,\phi}^{E,F}$ and use it to prove Theorem~\ref{T:LNP}, Theorem~\ref{T:HLN}, and Theorem~\ref{T:complete metrics on torus}.
Finally, in Section~\ref{S:BW} we construct the operator $\Bop_{\rho,\psi}$ and use it to prove Theorem~\ref{T:sharp band width}.


\subsection*{Acknowledgment}
I am very thankful to Thomas Schick for many enlightening discussions and suggestions.
I would also like to thank the anonymous referee for having pointed out two technical issues in a previous version of this article and for having helped improving the quality of the paper.


\section{A $K$-theoretic additivity formula for the index}\label{S:additivity formula}
This section is devoted to the analytical background of this paper.
In Subsection~\ref{SS:diff operators}, we recall some preliminary notions on differential operators acting on bundles of modules over $C^\ast$-algebras and fix notation.
In Subsection~\ref{SS:relative completeness}, we consider a differential operator $\Pop$ on a not necessarily complete Riemannian manifold $M$.
In order to ensure that $\Pop$ has self-adjoint and regular closure, we make use of the notion of completeness of $M$ for $\Pop$, developed by Higson and Roe~\cite{HR00} and extended to the $C^\ast$-algebra setting by Ebert~\cite{Ebe16}.
When $\Pop^2$ is uniformly positive at infinity, by results of Ebert~\cite{Ebe16} the closure of $\Pop$ is Fredholm and its index is well defined.
In Subsection~\ref{SS:cut-and-paste}, we extend to this slightly more general class of operators a $K$-theoretic additivity formula due to Bunke~\cite{Bun95}.


\subsection{Differential operators linear over $C^\ast$-algebras}\label{SS:diff operators}
Throughout this paper, $A$ denotes a complex unital $C^\ast$-algebra.
We will also consider the case when $A$ is endowed with a Real structure.
We are mostly interested in the following two types of Real $C^\ast$-algebras.
The first one is the Real Clifford algebra $\cl_{n,m}$: see~\cite[Section~1.2]{Sch93} and~\cite[page~4]{Ebe16} for details.
The second one is the maximal group $C^\ast$-algebra $C^\ast\Gamma$ associated to a countable discrete group $\Gamma$.
This is the completion of the group algebra $\CC[\Gamma]$ with respect to the maximal norm and is endowed with a canonical Real structure induced by complex conjugation: see~\cite[Section~1.1]{Ebe16} and~\cite[Definition~3.7.4]{HR00}.

For Hilbert $A$-modules $H$ and $H^\prime$, we denote by $\cal L_A(H,H^\prime)$ the space of adjointable operators from $H$ to $H^\prime$ and by $\cal K_A(H,H^\prime)$ the subspace of the compact ones.
We also use the notation $\cal L_A(H)\defeq \cal L_A(H,H)$ and $\cal K_A(H)\defeq \cal K_A(H,H)$.
For the properties of Hilbert $A$-modules and adjointable operators, we refer to~\cite{Lan95} and~\cite[Section~15]{WO93}.

Let $(M,g)$ be a Riemannian manifold.
Let $W$ be a bundle of finitely generated projective Hilbert $A$-modules with inner product on $M$ and let $\Pop\colon\Gamma(M;W)\to\Gamma(M;W)$ be a formally self-adjoint differential operator of order one.
If $W$ is $\ZZ_2$-graded, we require that the operator $\Pop$ is odd with respect to the grading.
If $A$ has a Real structure, we require that $W$ is a bundle of finitely generated projective Real Hilbert $A$-modules and the operator $\Pop$ is real, i.e. $\Pop\kappa( w)=\kappa(\Pop w)$ for all $w\in\Gamma(M;W)$, where $\kappa$ is the involution defining the Real structure.
For more details, we refer to~\cite[Sections~1.1~and~1.2]{Ebe16}.
We are mostly interested in the two types of operators described in the following examples.


\begin{example}\label{Ex:complex dirac}
Let $(M,g)$ be a Riemannian spin manifold and let $E$ be a Hermitian vector bundle over $M$ endowed with a metric connection.
Let $\slashed S_M$ and $\slashed\D_M$ be the associated complex spinor bundle and complex spin Dirac operator .
Denote by $\slashed \D_{M,E}\colon\Gamma(M;\slashed S_M\tensor E)\to \Gamma(M;\slashed S_M\tensor E)$ the operator $\slashed\D_M$ twisted with the bundle $E$. 
If $M$ is even dimensional, $\slashed S_M$ is $\ZZ_2$-graded and the operator $\slashed \D_{M,E}$ is odd with respect to the induced $\ZZ_2$-grading on $\slashed S_M\tensor E$.
If in addition $M$ is closed, the operator $\slashed \D_{M,E}$ defines a class $\ind\left(\slashed\D_{M,E}\right)$ in $K_0(\CC )=\ZZ$.
For more details on this construction, we refer to~\cite[\S II.5]{LM89}.
\end{example}


\begin{example}\label{Ex:real Dirac}
Let $(M,g)$ be an $n$-dimensional Riemannian spin manifold.
Let $E$ be a bundle of finitely generated projective Real Hilbert $A$-modules with inner product and metric connection on $M$.
Let $\spinor_M$ be the $\cl_{n,0}$-spinor bundle on $(M,g)$ with associated $\cl_{n,0}$-linear spin Dirac operator $\spind_M$.
The bundle $\spinor_M $ is endowed with a $\cl_{n,0}$-valued inner product and is equipped with canonical Real structure and $\ZZ_2$-grading.
Let $\spind_{M,E}\colon\Gamma(\spinor_M\tensor E)\to\Gamma(\spinor_M\tensor E)$ be the operator $\spind_M$ twisted with the bundle $E$.
The $\ZZ_2$-grading on $\spinor_M$ induces a $\ZZ_2$-grading on $\spinor_M\tensor E$ and the operator $\spind_{M,E}$ is odd with respect to this grading.
When $M$ is closed, the operator $\spind_{M,E}$ defines a class $\ind\left(\spind_{M,E}\right)\in\KO_n(A)$.
For more details, see~\cite[\S II.7]{LM89} and~\cite[Section~1]{Ebe16}.
For the background material on Dirac operators twisted with bundles of Hilbert $A$-modules, we refer to~\cite[Section~6.3]{Sch05}.
We finally recall a particular instance of this construction, which is relevant for the geometric applications of this paper.
Let $M$ be a closed $n$-dimensional spin manifold with fundamental group $\Gamma$.
Let $\mathcal L_\Gamma$ be the Mishchenko bundle over $M$.
The bundle $\mathcal L_\Gamma$ has typical fiber $C^\ast\Gamma$ and is equipped with a canonical flat connection.
The class $\ind\left(\spind_{M,\mathcal L_\Gamma}\right)\in\KO_n(C^\ast\Gamma)$ is called the \emph{Rosenberg index} of $M$ and is denoted by $\alpha(M)$.
For more details, see~\cite{Ros07} and~\cite{Sto02}.
\end{example}


\begin{remark}
To be precise, $\ind\left(\spind_{M,E}\right)$ is a class in $\KO_n(A_{\RR})$, where $A_{\RR}$ is the real $C^\ast$-algebra consisting of the fixed points of the involution of $A$.
With a slight abuse of notation, we denote a Real $C^\ast$-algebra and its fixed point algebra by the same symbol.
\end{remark}


\begin{remark}
The fixed point algebra of $C^\ast\Gamma$ with respect to the canonical involution is the maximal real $C^\ast$-algebra of $\Gamma$, which in this paper will be denoted by the same symbol.
\end{remark}


\subsection{Manifolds which are complete for a differential operator}\label{SS:relative completeness}
Let $(M,g)$ be a Riemannian manifold.
Let $ W\to M$ be a bundle of finitely generated projective Hilbert $A$-modules with inner product and let $\Pop\colon\Gamma(M;W)\to\Gamma(M;W)$ be a formally self-adjoint differential operator of order one.
We regard $\Pop$ as a symmetric unbounded operator on $L^2(M;W)$ with initial domain $\Gamma_\rm c(M;W)$.
We will now give a condition so that its closure $\bar \Pop\colon\dom(\bar \Pop)\to L^2(M; W)$ is self-adjoint and regular.
For the background material on unbounded operators on Hilbert $A$-modules and the notion of regularity, see~\cite{Lan95}.


\begin{definition}\label{D:coercive}
A \emph{coercive function} is a proper smooth function $h\colon M\to \RR$ which is bounded from below.
\end{definition}


\begin{definition}
We say that the pair $(M,\Pop)$ is \emph{complete}, or that $M$ is \emph{complete for $\Pop$}, if there exists a coercive function $h\colon M\to\RR$ such that the commutator $\left[\Pop,h\right]$ is bounded.
\end{definition}


\begin{remark}\label{R:completeness depends on symbol}
The notion of completeness of a manifold for an operator depends only on the principal symbol of the operator.
This means that if $(M,\Pop)$ is complete and $\Phi\colon W\to W$ is a fiberwise self-adjoint bundle map, then $(M,\Pop+\Phi)$ is also complete.
\end{remark}


\begin{remark}\label{R:compact perturbation of coercive functions}
Suppose $h$ is a coercive function on $M$ and $\hat h\colon M\to\RR$ is a smooth function coinciding with $h$ outside of a compact set.
Then $\hat h$ is a coercive function as well.
Moreover, $\left[\Pop,h\right]$ is bounded if and only if $[\Pop,\hat h]$ is bounded.
\end{remark}


The next theorem, due to Ebert, gives the wanted sufficient condition.
It is a generalization to operators linear over $C^\ast$-algebras of a result of Higson and Roe~\cite[Proposition~10.2.10]{HR00}.


\begin{theorem}[Ebert, {\cite[Theorem~1.14]{Ebe16}}]\label{T:HR}
If $(M,\Pop)$ is complete, then the closure of $\Pop$ is self-adjoint and regular.
\end{theorem}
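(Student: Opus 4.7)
The plan is to adapt the cutoff argument of Higson and Roe~\cite[Proposition~10.2.10]{HR00} from the Hilbert space setting to the Hilbert $A$-module setting. Fix $\chi\in C^\infty_c(\RR;[0,1])$ equal to $1$ on $[-1,1]$ with support in $[-2,2]$, and set $\chi_n\defeq\chi(h/n)$. Because $h$ is proper and bounded below, each $\chi_n$ is compactly supported and $\chi_n\to 1$ pointwise. Since $[\Pop,\chi_n]=\tfrac{1}{n}\chi'(h/n)\,[\Pop,h]$ is a fiberwise bundle endomorphism whose operator norm on $L^2(M;W)$ is bounded by $\tfrac{1}{n}\|\chi'\|_\infty\|[\Pop,h]\|$, we have $\|[\Pop,\chi_n]\|\to 0$. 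The role of completeness of $(M,\Pop)$ is precisely to furnish this sequence of approximate units that commute with $\Pop$ in the limit.

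The key intermediate step is a Friedrichs mollifier lemma in the Hilbert $A$-module setting: a compactly supported $v\in L^2(M;W)$ whose distributional image $\Pop v$ lies in $L^2$ belongs to $\dom(\overline\Pop)$. Because mollification acts fiberwise and therefore commutes with the $A$-module structure, the classical proof transfers unchanged. With this in hand, for any $u\in\dom(\Pop^\ast)$ the section $u_n\defeq\chi_n u$ is compactly supported and satisfies $\Pop u_n=\chi_n\Pop^\ast u+[\Pop,\chi_n]u\in L^2$, so $u_n\in\dom(\overline\Pop)$. Dominated convergence and the commutator estimate give $u_n\to u$ and $\overline\Pop u_n\to\Pop^\ast u$ in $L^2$, and closedness of $\overline\Pop$ then yields $u\in\dom(\overline\Pop)$ with $\overline\Pop u=\Pop^\ast u$. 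Therefore $\overline\Pop=\Pop^\ast$ as operators on the same domain.

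For regularity in the sense of Hilbert $A$-modules, the equality $\overline\Pop=\Pop^\ast$ is not by itself sufficient; one must additionally check that the ranges of $\overline\Pop\pm\di$ are dense in $L^2(M;W)$, since by the Woronowicz--Lance criterion~\cite[Chapter~9]{Lan95} this pair of density conditions is equivalent to $\overline\Pop$ being a self-adjoint regular operator. I would obtain density by a second application of the cutoff technique: approximate $v\in L^2(M;W)$ by the compactly supported sections $\chi_n v$ and construct approximate preimages under $\overline\Pop\pm\di$ by localizing to a relatively compact neighborhood of $\supp(\chi_n v)$, passing to a closed double or imposing appropriate boundary conditions so that the Hilbert $A$-module spectral theorem applies, and then transferring the solutions back via a secondary cutoff while controlling the patching error by the commutator bound $\|[\Pop,\chi_n]\|=O(1/n)$. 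This regularity step is the main obstacle: in the $C^\ast$-module setting closed submodules need not be complemented, so the orthogonality arguments available over Hilbert spaces have to be replaced by explicit construction of approximate preimages for every $v$, and the approximation rate must be matched to the decay of the commutators produced by the coercive function $h$.
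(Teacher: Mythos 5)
The paper offers no proof of this statement: it is imported verbatim from Ebert \cite[Theorem~1.14]{Ebe16} (itself the $C^\ast$-module generalization of \cite[Proposition~10.2.10]{HR00}), so the only meaningful comparison is with the cited argument, not with anything internal to the paper. Your first two paragraphs reproduce that route correctly: since the commutator with a function is the principal symbol evaluated on its differential, $[\Pop,\chi(h/n)]=\tfrac{1}{n}\chi'(h/n)[\Pop,h]$ and the cutoffs built from the coercive function have commutators of norm $O(1/n)$; combined with a Friedrichs-type lemma (compactly supported elements of $\dom(\Pop^\ast)$ lie in $\dom(\overline{\Pop})$) this gives $\overline{\Pop}=\Pop^\ast$. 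You are also right on the essential point that in the Hilbert $A$-module setting this is not the end, and that the correct target is the Lance criterion: for a closed densely defined symmetric operator, density of the ranges of $\overline{\Pop}\pm\di$ is equivalent to self-adjointness together with regularity \cite[Chapters~9--10]{Lan95}.

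The gap is that your third paragraph, which is the only place where the $C^\ast$-module-specific difficulty actually lives, is a plan rather than a proof, and both of its essential inputs are left unproved and partly misnamed. First, the claim that the mollifier lemma ``transfers unchanged'' presupposes local elliptic regularity and Sobolev theory for operators with coefficients in bundles of Hilbert $A$-modules; Rellich compactness is unavailable there, and constructing this theory is the bulk of \cite{Ebe16}. Second, the localization step needs, for each $n$, a closed manifold (for instance the double of a compact submanifold with boundary containing $\{h\leq 2n\}$) to which the bundle and the operator extend, together with the fact that the resulting operator $T_n$ on that closed manifold is self-adjoint and regular, so that $(T_n\pm\di)^{-1}$ exists with norm at most $1$; this is Mishchenko--Fomenko pseudodifferential theory over $C^\ast$-algebras, not ``the Hilbert $A$-module spectral theorem'', and without it your approximate preimages do not exist. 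Granting those two inputs, your scheme does close: for compactly supported $v$ and $n$ large, solve $(T_n+\di)\hat u_n=v$ on the double, set $u_n\defeq\chi_n\hat u_n$ (compactly supported, hence in $\dom(\overline{\Pop})$ by the mollifier lemma), and compute $(\overline{\Pop}+\di)u_n=v+[\Pop,\chi_n]\hat u_n$ with error at most $Cn^{-1}\norm{v}$; density of compactly supported sections and the Lance criterion then finish, and the same works for $-\di$. So the outline is sound and is essentially the route of the cited source, but as written the decisive regularity step is asserted, not carried out.
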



Assume $(M,\Pop)$ is complete and denote the self-adjoint and regular closure of $\Pop$ by the same symbol.
Assume also there is a $\ZZ_2$-grading $W=W^+\oplus W^-$ and the operator $\Pop$ is odd with respect to this grading,
i.e. it is of the form
\begin{equation}\label{E:D is odd}
    \Pop=\begin{pmatrix}0&\Pop^-\\\Pop^+&0
    \end{pmatrix}\,,
\end{equation}
where $\Pop^\pm\colon\Gamma(M;W^\pm)\to\Gamma(M;W^\mp)$ are formally adjoint to one another.
Finally, assume $\Pop$ is elliptic.

To simplify the notation, in the remaining part of this section we set $H\defeq L^2(M;W)$.
We say that the operator $\Pop^2$ is \emph{uniformly positive at infinity} if there exist a compact subset $K\subset M$ and a constant $c>0$ such that
\begin{equation}\label{E:invertible at infty}
	\bigl<\Pop^2 w,w\bigr>\geq c\bigl< w,w\bigr>,\qquad w\in\Gamma_\rm c\bigl(M\setminus K;W|_{M\setminus K}\bigr).
\end{equation}
In this case, by~\cite[Theorem~2.41]{Ebe16} the operator $\Pop\left(\Pop^2+1\right)^{-1/2}\in\cal L_A(H)^\rm{odd}$ is Fredholm.
We denote its index by $\ind\left(\Pop\right)$.


In the next lemma, we collect some properties of the operator $\Pop$ that will be needed in the proof of the additivity formula.


\begin{lemma}\label{P:invertible at infinity}
The operator $\Pop^2+1+t^2$ is invertible for every $t\geq0$.
Moreover, $\bigl(\Pop^2+1+t^2\bigr)^{-1} $ is a positive element of $\mathcal{L}_A(H)$ and there is the absolutely convergent integral representation
\begin{equation}\label{E:integral representation}
	\bigl(\Pop^2+1\bigr)^{-1/2}=\frac{2}{\pi}\int_0^\infty (\Pop^2+1+t^2)^{-1}\,\dd t.
\end{equation}
Finally, we have the estimates
\begin{align}\label{E:R(t)}
    &\bigl\|(\Pop^2+1+t^2)^{-1}\bigr\|_{\mathcal{L}_A(H)}\leq (1+t^2)^{-1}\\\label{E:DR(t)1}
    &\bigl\|\Pop (\Pop^2+1+t^2)^{-1}\bigr\|_{\mathcal{L}_A(H)}\leq \frac{1}{2\sqrt{1+t^2}}\\\label{E:D^2R(t)1}
	&\bigl\|\Pop^2 \left(\Pop^2+1+t^2\right)^{-1}\bigr\|_{\mathcal{L}_A(H)}\leq 1
\end{align}
for all $t\geq 0$.
\end{lemma}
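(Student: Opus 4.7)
The plan is to invoke Theorem~\ref{T:HR} together with the continuous functional calculus for self-adjoint regular operators on Hilbert $A$-modules (see \cite{Lan95}), and then reduce every assertion to a one-variable estimate. By completeness of $(M,\Pop)$, the closure of $\Pop$ is self-adjoint and regular, so one has a norm-decreasing $*$-homomorphism from $C_b(\RR)$ to $\mathcal{L}_A(H)$ sending $f\mapsto f(\Pop)$, which is multiplicative and positivity-preserving, and for which $\|f(\Pop)\|_{\mathcal{L}_A(H)} \leq \sup_{\RR}|f|$. For each $t\geq 0$, write $r_t(x)\defeq (x^2+1+t^2)^{-1}\in C_b(\RR)$. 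Applying the functional calculus and using multiplicativity yields $r_t(\Pop)(\Pop^2+1+t^2)=\Id=(\Pop^2+1+t^2)r_t(\Pop)$, which settles invertibility; positivity of $r_t(\Pop)$ follows from $r_t>0$.

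The three norm estimates are then immediate sup bounds. Inequality \eqref{E:R(t)} follows from $\sup_{\RR} r_t = (1+t^2)^{-1}$. For \eqref{E:DR(t)1}, apply the calculus to $f_t(x) \defeq x\,r_t(x)$; a critical point computation gives $f'_t(x) = (1+t^2-x^2)/(x^2+1+t^2)^2$, so the maximum of $|f_t|$ is attained at $x^2=1+t^2$ with value $1/(2\sqrt{1+t^2})$. For \eqref{E:D^2R(t)1}, write $\Pop^2 r_t(\Pop) = g_t(\Pop)$ with $g_t(x)\defeq x^2 r_t(x) = 1 - (1+t^2)r_t(x)$; since $0\leq (1+t^2)r_t(x)\leq 1$, we get $0\leq g_t\leq 1$ and thus $\|g_t(\Pop)\|_{\mathcal{L}_A(H)}\leq 1$.

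For the integral representation \eqref{E:integral representation}, start from the pointwise identity
\[
\frac{2}{\pi}\int_0^\infty r_t(x)\,\dt = (x^2+1)^{-1/2},
\]
obtained by the substitution $t = \sqrt{x^2+1}\,\tan\theta$. By \eqref{E:R(t)}, the operator-valued integrand satisfies $\|r_t(\Pop)\|_{\mathcal{L}_A(H)}\leq (1+t^2)^{-1}$, so the Bochner integral converges absolutely in $\mathcal{L}_A(H)$. To identify it with $(\Pop^2+1)^{-1/2}$, truncate at $N$: the tail estimate $\int_N^\infty (x^2+1+t^2)^{-1}\,\dt \leq \int_N^\infty t^{-2}\,\dt = 1/N$ is uniform in $x$, so $F_N(x)\defeq \frac{2}{\pi}\int_0^N r_t(x)\,\dt$ converges to $(x^2+1)^{-1/2}$ uniformly on $\RR$, whence $F_N(\Pop)\to (\Pop^2+1)^{-1/2}$ in $\mathcal{L}_A(H)$. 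For fixed $N$, approximate the Bochner integral $\int_0^N r_t(\Pop)\,\dt$ in norm by Riemann sums $\sum_k r_{t_k}(\Pop)\Delta t_k = \bigl(\sum_k r_{t_k}\Delta t_k\bigr)(\Pop)$, using linearity of the functional calculus, and observe that $\sum_k r_{t_k}\Delta t_k \to \int_0^N r_t(\cdot)\,\dt = F_N$ uniformly on $\RR$; continuity of the functional calculus then gives $\int_0^N r_t(\Pop)\,\dt = F_N(\Pop)$. Letting $N\to\infty$ yields \eqref{E:integral representation}.

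The only genuinely delicate step is the last one: interchanging the Bochner integral with the functional calculus. I would handle it precisely by the Riemann-sum approximation argument sketched above; the remaining statements are direct consequences of sup-norm bounds for explicit rational functions on $\RR$.
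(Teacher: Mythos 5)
Your argument is correct, but it takes a different route from the paper: the paper's proof is a two-line citation, deducing the invertibility, positivity, the integral representation and the bound \eqref{E:R(t)} from~\cite[Proposition~1.21]{Ebe16}, and the bounds \eqref{E:DR(t)1}, \eqref{E:D^2R(t)1} from Part~(2) of~\cite[Theorem~1.19]{Ebe16}, whereas you essentially re-prove those cited results from scratch. Concretely, you invoke Theorem~\ref{T:HR} to get a self-adjoint regular closure and then run the continuous functional calculus for regular self-adjoint operators on Hilbert $A$-modules (Lance), reducing everything to sup-norm bounds for the explicit functions $r_t$, $x\,r_t$, $x^2 r_t$ and handling \eqref{E:integral representation} by a truncation plus Riemann-sum interchange; all the one-variable computations (maximum of $x/(x^2+1+t^2)$ at $x^2=1+t^2$, the tail bound $\int_N^\infty t^{-2}\,\dt$, the arctangent evaluation of the integral) are right, and the uniform-in-$x$ convergence argument legitimately identifies the Bochner integral with $(\Pop^2+1)^{-1/2}$. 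The only step where you are slightly cavalier is the identity $r_t(\Pop)\bigl(\Pop^2+1+t^2\bigr)=\Id$: the function $x^2+1+t^2$ is not in $C_b(\RR)$, so ``multiplicativity'' of the bounded calculus does not apply verbatim; one has to use that $\Pop^2$ is again regular and self-adjoint and that the calculus is compatible with resolvents (which is precisely the content of the results of Lance/Ebert being bypassed) — standard, but worth a sentence. What the paper's approach buys is brevity and avoidance of setting up the $C_b(\RR)$-calculus; what yours buys is a self-contained proof with explicit constants, which in particular makes transparent where the factors $(1+t^2)^{-1}$ and $\tfrac{1}{2\sqrt{1+t^2}}$ come from.
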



\begin{proof}
The first part of the lemma and Inequality~\eqref{E:R(t)} follow from~\cite[Proposition~1.21]{Ebe16}.
Inequalities~\eqref{E:DR(t)1} and~\eqref{E:D^2R(t)1} follow from Part~(2) of~\cite[Theorem~1.19]{Ebe16}.
\end{proof}


\subsection{Cut-and-paste invariance}\label{SS:cut-and-paste}
For $i=1,2$, let $M_i$ be a Riemannian manifold, let $W_i=W_i^+\oplus W_i^-$ be a $\ZZ_2$-graded bundle of finitely generated projective Hilbert $A$-modules with inner product and let $\Pop_i$ be an odd formally self-adjoint elliptic differential operator of order one.
We assume that $(M_i,\Pop_i)$ is complete and that $\Pop_i^2$ is uniformly positive at infinity so that its index is well defined.
Let $U_i\cup_{N_i}V_i$ be a partition of $M_i$, where $N_i$ is a closed separating hypersurface.
This means that $M_i=U_i\cup V_i$ and $U_i\cap V_i=N_i$.
We make the following assumption.


\begin{assumption}\label{A:cut-and-paste}
\emph{The operators coincide near the separating hypersurfaces}.
This means that there exist tubular neighborhoods $\cal U(N_1)$ and $\cal U(N_2)$ respectively of $N_1$ and $N_2$ and an isometry $\Gamma\colon\cal U(N_1)\to\cal U(N_2)$ such that $\Gamma|_{N_1}\colon N_1\to N_2$ is a diffeomorphism and $\Gamma$ is covered by a bundle isometry 
\[
    \widetilde \Gamma\colon W_1\big|_{\cal U(N_1)}\to W_2\big|_{\cal U(N_2)}\quad\text{so that}\quad
    \Pop_2=\widetilde \Gamma\circ\Pop_1\circ\widetilde \Gamma^{-1}
\]
in $\cal U(N_2)$.
\end{assumption}


\noindent This assumption allows us to do the following cut-and-paste construction.
Cut the manifolds $M_i$ and the bundles $W_i$ along $N_i$.
Use the map $\Gamma$ to interchange the boundary components and construct the Riemannian manifolds
\[
    M_3\defeq U_1\cup_NV_2\qquad\text{and}\qquad 
    M_4\defeq U_2\cup_NV_1,
\]
where $N\cong N_1\cong N_2$.
Moreover, using the map $\widetilde\Gamma$ to glue the bundles, we obtain $\ZZ_2$-graded bundles
\[
    W_3\defeq W_1\big|_{U_1}\cup_N W_2\big|_{V_2}\qquad\text{and}\qquad 
    W_4\defeq W_2\big|_{U_2}\cup_N W_1\big|_{V_1},
\]
and odd formally self-adjoint elliptic differential operators of order one $\Pop_3$ and $\Pop_4$.
Observe that, using Remark~\ref{R:compact perturbation of coercive functions}, the pairs $(M_3,\Pop_3)$ and $(M_4,\Pop_4)$ are complete and that the operators $\Pop_3^2$ and $\Pop_4^2$ are uniformly positive at infinity.
Therefore, the indices of $\Pop_3$ and $\Pop_4$ are well defined.
The next theorem is a slight generalization of~\cite[Theorem~1.2]{Bun95}.


\begin{theorem}\label{T:relative}
$\ind\left(\Pop_1\right)+\ind\left(\Pop_2\right)=\ind\left(\Pop_3\right)+\ind\left(\Pop_4\right)$.
\end{theorem}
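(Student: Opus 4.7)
The plan is to adapt Bunke's cut-and-paste argument~\cite{Bun95} to the Hilbert $A$-module setting, using the analytic framework of Ebert together with the resolvent estimates of Lemma~\ref{P:invertible at infinity}.

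First I would identify Hilbert modules. Since the separating hypersurfaces have measure zero and the bundles are intertwined by $\widetilde\Gamma$ near the cuts, there is a canonical unitary
\[
    U\colon H_{12}\defeq L^2(M_1;W_1)\oplus L^2(M_2;W_2)\longrightarrow L^2(M_3;W_3)\oplus L^2(M_4;W_4)\defeq H_{34}
\]
obtained by splitting each section into its restrictions to $U_1,V_1,U_2,V_2$ and recombining them according to the cut-and-paste prescription. Writing $D_{12}\defeq\Pop_1\oplus\Pop_2$ and $D_{34}\defeq\Pop_3\oplus\Pop_4$, both of which are self-adjoint and regular by Theorem~\ref{T:HR} and Fredholm by~\eqref{E:invertible at infty}, additivity of the index on direct sums reduces the claim to showing $\ind(D_{12})=\ind(D_{34})$. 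This in turn follows once one proves
\[
    UF_{12}U^{-1}-F_{34}\in\cal K_A(H_{34}),\qquad F_{ij}\defeq D_{ij}(D_{ij}^2+1)^{-1/2},
\]
since compact perturbations preserve the index class.

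Next I would construct a common parametrix $Q(t)$ for both $D_{12}^2+1+t^2$ and, after conjugation by $U$, $D_{34}^2+1+t^2$. Choose smooth cut-offs $\chi_U,\chi_V$ with $\chi_U^2+\chi_V^2=1$, such that $\supp(\chi_U)$ lies on the $U$-side, $\supp(\chi_V)$ on the $V$-side, and $\supp(d\chi_\bullet)$ is contained in the common tubular neighborhood of the separating hypersurface where $\widetilde\Gamma$ is defined; intertwine the cut-offs across the four manifolds via $\widetilde\Gamma$. Setting $R_i(t)\defeq(\Pop_i^2+1+t^2)^{-1}$ and writing
\[
    Q(t)\defeq\chi_U R_1(t)\chi_U+\chi_V R_2(t)\chi_V,
\]
interpreted through the four-piece decomposition, a direct computation exploiting that $\Pop_3,\Pop_4$ coincide with $\Pop_1,\Pop_2$ on $\supp(d\chi_\bullet)$ yields
\[
    (D_{12}^2+1+t^2)Q(t)-\Id=E_{12}(t),\quad(D_{34}^2+1+t^2)Q(t)-\Id=E_{34}(t),
\]
with both error terms expressed as combinations of commutators $[\Pop_i,\chi_\bullet]$ and $[\Pop_i,[\Pop_i,\chi_\bullet]]$ composed with $R_i(t)$ and cut-offs, hence supported in the common tubular neighborhood.

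Finally I would compare the bounded transforms. The resolvent identities $R_{ij}(t)=Q(t)-R_{ij}(t)E_{ij}(t)$, combined with the estimates~\eqref{E:R(t)}--\eqref{E:D^2R(t)1}, give that $D_{12}R_{12}(t)-D_{34}R_{34}(t)$ is compact with operator norm $O((1+t^2)^{-1})$. Substituting into the integral representation~\eqref{E:integral representation} and using that $\cal K_A$ is closed in operator norm yields $UF_{12}U^{-1}-F_{34}\in\cal K_A(H_{34})$, proving the theorem. The hard part will be establishing $C^\ast$-algebraic compactness of the error terms $E_{ij}(t)$ and of the resolvent differences as elements of $\cal L_A$: unlike in the scalar case, Rellich-type compactness for multiplication by a compactly supported cut-off composed with an elliptic resolvent is not automatic, but requires the finitely generated projective structure of the bundle together with the local compactness results in~\cite[Section~2]{Ebe16}. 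Achieving the estimates uniformly in $t$ with integrable decay, so that the $t$-integral stays inside $\cal K_A$, is the technical heart of the argument and is precisely where the quantitative bounds of Lemma~\ref{P:invertible at infinity} enter.
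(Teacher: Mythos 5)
There is a genuine gap at the very first reduction, and it is fatal to the strategy as stated. The sharp cut-and-paste unitary $U\colon H_1\oplus H_2\to H_3\oplus H_4$ (defined by restricting sections to $U_1,V_1,U_2,V_2$ and recombining) does exist, but your key claim that $UF_{12}U^{-1}-F_{34}\in\cal K_A(H_3\oplus H_4)$ is false in general, so no parametrix estimate can establish it. To see why, look at the block of this difference mapping the $L^2(V_2)$-part of $H_3$ to the $L^2(U_1)$-part of $H_3$: since $F_1\oplus F_2$ does not couple sections originating on $M_1$ with sections originating on $M_2$, the corresponding block of $UF_{12}U^{-1}$ is zero, so the difference contains the full off-diagonal compression $1_{U_1}F_3 1_{V_2}$ of the bounded transform of $\Pop_3$ across the hypersurface $N$. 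Bounded transforms of Dirac-type operators are nonlocal zeroth-order operators, and their compressions by characteristic functions of two regions meeting along a hypersurface are Hankel-type singular integral operators which are \emph{not} compact: already for $\Pop_3=-\di\,\dd/\dd\theta$ on the circle, $1_AF_31_B=1_A[F_3,1_B]$ is, up to a compact error, a Hankel operator with jump-discontinuous symbol, hence non-compact by Hartman's theorem (equivalently, commutators of order-zero singular integrals with characteristic functions are not compact). This non-locality across the cut is precisely the obstruction that any relative index theorem must circumvent; conjugating by the sharp-restriction unitary does not circumvent it.

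The paper (following Bunke~\cite{Bun95}, and~\cite{CB18} in the $C^\ast$-setting) avoids sharp restrictions entirely: it works on $\cal H=H_1\oplus H_2\oplus H_3^{\rm{op}}\oplus H_4^{\rm{op}}$, chooses a \emph{smooth} partition of unity $\chi_U^2+\chi_V^2=1$ subordinate to the decompositions, builds from it the odd operator $\cal X$, and invokes the abstract lemma that $\ind(\cal F)=0$ once $\cal X\cal F+\cal F\cal X\in\cal K_A(\cal H)$. This reduces the analysis to compactness of terms of the form $\chi F_3-F_1\chi$ with $\chi$ smooth and supported where the operators agree, which is where your parametrix/resolvent part of the plan is essentially correct and close to the paper's computation: the error terms involve $[\Pop,\chi]=\cc(\dd\chi)$, a bounded bundle map with compact support, and the estimates \eqref{E:R(t)}--\eqref{E:D^2R(t)1} together with Ebert's local compactness results give compactness and integrability in $t$ in the representation \eqref{E:integral representation}. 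So the missing idea is not the uniform-in-$t$ compactness you flagged as the technical heart, but the algebraic mechanism (Bunke's $\cal X$-trick, or an equivalent) that converts ``the operators agree near the cut'' into the four-term index identity without ever conjugating by a sharp-cut unitary; with your reduction replaced by that mechanism, the rest of your outline goes through.
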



\begin{proof}
Use the notation 
\[
    \cal H=H_1\oplus H_2\oplus H_3^\rm{op}\oplus H_4^\rm{op}\qquad\text{and}\qquad
    \cal F=F_1\oplus F_2\oplus F_3\oplus F_4,
\]
where $H_i= L^2(M_i;W_i)$, $F_i= P_i(P_i^2+1)^{-1/2}$ and $\Rop_i(t)=\bigl(\Pop^2_i+1+t^2\bigr)^{-1}$.
In order to prove the thesis, we need to show that $\ind\left(\cal F\right)=0$.

Pick cutoff functions $\chi_{U_i}$ and $\chi_{V_i}$ such that 
\[
    \supp\left(\chi_{U_i}\right)\subset U_i\cup\cal U(N_i)\qquad
    \supp\left(\chi_{V_i}\right)\subset V_i\cup\cal U(N_i)\qquad
    \chi_{U_i}^2+\chi_{V_i}^2=1.
\]
Moreover, we assume that $\chi_{U_1}=\chi_{U_2}$ and $\chi_{V_1}=\chi_{V_2}$ when restricted to $\cal U(N)\cong\cal U(N_1)\cong\cal U(N_2)$.
Multiplication by $\chi_{U_1}$ defines an operator $a\in\cal L_A(H_3,H_1)$.
Similarly, use the cutoff functions to define operators $b\in\cal L_A(H_1,H_4)$, $c\in\cal L_A(H_2,H_3)$, and $d\in\cal L_A(H_2,H_4)$.
Consider the operator
\[
    \cal X\defeq z\begin{pmatrix}0&0&-a^\ast&-b^\ast\\0&0&-c^\ast&d^\ast\\a & c &0&0\\ b &-d &0&0
    \end{pmatrix}\in\cal L_A(\cal H),
\]
where $z\in\cal L_A(H)$ is the $\ZZ_2$-grading.
As explained in~\cite[Subsection~3.1]{CB18} and in the proof of~\cite[Theorem~1.14]{Bun95}, in order to show that $\ind\left(\cal F\right)=0$, it suffices to show that $\cal X\cal F+\cal F\cal X\in\cal K_A(\cal H)$.
To this end, it is enough to verify the compactness of operators of the form $a^\ast F_3-F_1 a^\ast\in\cal L_A(H_3,H_1)$.

Let $\chi=\chi_{U_1}$ and let $\rho\in C^\infty_\rm c(U_i\cup\cal U(N_i))$ be such that $\rho\chi=\chi$.
Using Assumption~\ref{A:cut-and-paste}, the operators $\chi\Pop_3-\Pop_1\chi$ and $\left(\chi\Pop_1-\Pop_1\chi\right)\rho$ define the same element in $\cal L_A(H_3,H_1)$, that we denote by $[\Pop,\chi]$.
Using the integral representation~\eqref{E:integral representation} and the computations in~\cite[page~13]{Bun95}, we obtain
\begin{equation}\label{E:chiF3-F1chi compact}
    \chi F_3-F_1\chi=\frac{2}{\pi}\int_0^\infty\bigl(\chi \Pop_3\Rop_3(t)-\Pop_1\Rop_1(t)\chi\bigr)\,\dd t=\frac{2}{\pi}\int_0^\infty \Qop_{3,1}(t)\,\dd t,
\end{equation}
where
\begin{equation*}
    \Qop_{3,1}(t)\defeq -[\Pop,\chi]\Rop_3(t)+\Pop_1^2\Rop_1(t)[\Pop,\chi]\Rop_3(t)
    +\Pop_1\Rop_1(t)[\Pop,\chi]\Pop_3\Rop_3(t).
\end{equation*}
Using Inequalities~\eqref{E:R(t)},~\eqref{E:D^2R(t)1} and~\eqref{E:DR(t)1} and~\cite[Theorem~2.33 and~Remark~2.35]{Ebe16}, we deduce that the operator $\Qop_{3,1}(t)$ is compact and absolutely integrable.
By~\eqref{E:chiF3-F1chi compact}, $a^\ast F_3-F_1 a^\ast\in\cal K_A(H_3,H_1)$, which concludes the proof.
\end{proof}


\section{A rescaled Dirac operator}\label{S:admissible rescaling}
In this section, we present a general method to construct a complete pair on a Riemannian spin manifold.
Our method is based on rescaling the possibly twisted spin Dirac operator.
Moreover, we prove an estimate from below for the square of the rescaled twisted Dirac operator.
Finally, in order to obtain a slight improvement of this estimate, we extend to operators linear over $C^\ast$-algebras an inequality due to Friedrich~\cite[Thm.A]{Fr80} on closed manifolds and generalized by B\"ar~\cite[Theorem~3.1]{Bae09} to open manifolds.
This improvement will be used in Sections~\ref{S:LNP} and~\ref{S:BW} to obtain the factor $\sqrt{(n-1)/n}$ in Theorems~\ref{T:LNP},~\ref{T:HLN}, and~\ref{T:sharp band width}.
Even if we mostly focus on the spin case, all the results of this section hold with the obvious modifications for any operator of Dirac type.


\subsection{Admissible rescaling functions}\label{SS:admissible rescaling}
Let $(M,g)$ be a Riemannian manifold.
Let $ V\to M$ be a bundle of finitely generated projective Hilbert $A$-modules with inner product and let $\Zop\colon\Gamma(M; V)\to\Gamma(M; V)$ be a formally self-adjoint elliptic differential operator of order one such that
\begin{equation}\label{E:uniformly bounded symbol}
    \norm{\left[\Zop,\xi\right]_x}\leq \abs{\dd \xi_x},\qquad \xi\in C^\infty(M),\ x\in M.
\end{equation}
Here, $\norm{\left[\Zop,\xi\right]_x}$ is the norm of the adjointable map $[\Zop,\xi]_x\colon V_x\to V_x$.
For a function $\rho\colon M\to(0,\infty)$, define the rescaled operator $\Zop_\rho\colon\Gamma(M; V)\to\Gamma(M; V)$ as
\begin{equation}\label{E:rescaled operator}
    \Zop_\rho\defeq \rho\Zop\rho.
\end{equation}
Observe that $\Zop_\rho$ is a formally self-adjoint differential operator of order one and
\begin{equation}\label{E:rescaled symbol}
    \left[\Zop_\rho,\xi\right]=\rho^2 \left[\Zop,\xi\right],\qquad \xi\in C^\infty(M).
\end{equation}
Therefore, $\Zop_\rho$ is elliptic.


\begin{definition}\label{D:admissible rescaling}
A smooth function $\rho\colon M\to(0,1]$ is called an \emph{admissible rescaling function} for $M$ if there exists a coercive~(see Definition~\ref{D:coercive}) function $h$ such that $\rho^2\abs{\dd h}$ is in $L^\infty(M)$.
\end{definition}


\begin{remark}\label{R:coinciding at infty}
The property for a smooth function $\rho$ of being an admissible rescaling function depends only on its behaviour at infinity.
Moreover, suppose $\rho_1$, $\rho_2\colon M\to(0,1]$ are smooth functions such that $\rho_1$ is admissible and $\rho_2=b\rho_1$ outside of a compact set for some constant $b>0$.
Then $\rho_2$ is admissible as well.
\end{remark}


\begin{proposition}\label{P:adm->complete}
Let $\rho$ be an admissible rescaling function.
Then the pair $\bigl(M,\Zop_\rho\bigr)$ is complete.
\end{proposition}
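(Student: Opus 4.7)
The plan is to use the very same coercive function $h$ furnished by the definition of admissibility, and to show that $[\Zop_\rho,h]$ is bounded by unpacking the commutator.

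First I would apply the hypothesis that $\rho$ is admissible to produce a coercive function $h\colon M\to\RR$ such that $\rho^2\abs{\dd h}\in L^\infty(M)$. The only thing left is to show that $[\Zop_\rho,h]$ extends to a bounded adjointable operator on $L^2(M;V)$, so that $h$ itself witnesses completeness of $(M,\Zop_\rho)$.

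Next I would compute the commutator using $\Zop_\rho=\rho\Zop\rho$ and the fact that $\rho$ and $h$ are multiplication operators by scalar functions, hence commute:
\[
    [\Zop_\rho,h]=\rho\Zop\rho h-h\rho\Zop\rho=\rho[\Zop,h]\rho.
\]
Since $[\Zop,h]$ is of order zero (a first-order differential operator commuted with a smooth function), it is a fiberwise bundle endomorphism and therefore commutes with the scalar multiplication by $\rho$. Thus $[\Zop_\rho,h]=\rho^2[\Zop,h]$, in agreement with the general identity~\eqref{E:rescaled symbol}.

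Finally I would invoke the symbol estimate~\eqref{E:uniformly bounded symbol} with $\xi=h$ to obtain the pointwise bound
\[
    \bigl\|[\Zop_\rho,h]_x\bigr\|=\rho(x)^2\,\bigl\|[\Zop,h]_x\bigr\|\leq \rho(x)^2\,\abs{\dd h_x},
\]
so that $[\Zop_\rho,h]$ is a uniformly bounded bundle endomorphism by admissibility. This gives a bounded adjointable extension and shows completeness of $(M,\Zop_\rho)$. There is no real obstacle here: the entire argument is an exercise in chasing definitions, once one notes that rescaling by the scalar $\rho$ multiplies the principal symbol by $\rho^2$ and that the admissibility hypothesis was tailored precisely to absorb this factor.
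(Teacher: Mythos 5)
Your proof is correct and follows essentially the same route as the paper: take the coercive function $h$ provided by admissibility, use the identity $[\Zop_\rho,h]=\rho^2[\Zop,h]$ (which the paper records as~\eqref{E:rescaled symbol} and you re-derive), and bound it pointwise via~\eqref{E:uniformly bounded symbol} by $\rho^2\abs{\dd h}\in L^\infty(M)$. No gaps; this matches the paper's own one-line argument.
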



\begin{proof}
Since $\rho$ is admissible, choose a coercive function $h$ such that $\rho^2\abs{\dd h}$ is in $L^\infty(M)$.
By~\eqref{E:uniformly bounded symbol} and~\eqref{E:rescaled symbol}, we deduce
\[
    \norm{\left[\Zop_\rho,h\right]v}\leq \norm{\rho^2\abs{\dd h}}_\infty\norm{v},\qquad v\in\Gamma_\rm c(M;V).\qedhere
\]
\end{proof}


\begin{remark}\label{R:complete manifolds}
When $(M,g)$ is a complete Riemannian manifold, the function $\rho=1$ is admissible and Proposition~\ref{P:adm->complete} implies the classical fact that a Dirac operator on $(M,g)$ is essentially self-adjoint.
\end{remark}


We now describe a method for constructing admissible rescaling functions on open Riemannian manifolds.
In Sections~\ref{S:LNP} and~\ref{S:BW}, we will use this method together with the geometry at infinity of the manifolds to construct complete pairs.


\begin{proposition}\label{P:admissible criterion}
Let $\tau\colon M\to(0,\infty)$ be a smooth function such that 
\begin{equation}\label{E:tau infty}
	\lim_{x\to\infty}\tau(x)=0
\end{equation}
and there exists a constant $c>0$ satisfying
\begin{equation}\label{E:dtau}
	\abs{\dd\tau_x}\leq c,\qquad x\in M.
\end{equation}
Suppose $\gamma_\alpha\colon (0,\infty)\to (0,1]$ is a smooth function such that $\gamma_\alpha(t)=t^\alpha$ for $t$ near $0$.
Then $\rho_\alpha\defeq \gamma_\alpha\circ \tau$ is an admissible rescaling function for all $\alpha\geq 1/2$.
\end{proposition}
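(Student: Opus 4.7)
The plan is to construct a coercive function of the form $h = \Psi\circ\tau$ for a well-chosen smooth profile $\Psi\colon(0,\infty)\to\RR$; once this is done, the admissibility condition reduces to a one-variable estimate in which the threshold $\alpha\geq 1/2$ appears transparently.

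First I would pick $\Psi$ to equal $-\log t$ on a small interval $(0,\delta)$, with $\delta>0$ chosen so small that $\gamma_\alpha(t)=t^\alpha$ on $(0,\delta)$, and then smoothly extend $\Psi$ so that $\Psi'\equiv 0$ on $[2,\infty)$. Such a $\Psi$ is smooth on $(0,\infty)$ and bounded below, so $h\defeq\Psi\circ\tau$ is smooth and bounded below. Properness of $h$ then follows from combining the hypothesis $\tau(x)\to 0$ as $x\to\infty$ with the blow-up $\Psi(t)\to\infty$ as $t\to 0^+$: for each $N\in\RR$, the sublevel set $\{h\leq N\}$ is contained in $\{\tau\geq t_N\}$ for some $t_N>0$, and the latter is compact by the decay of $\tau$. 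This shows $h$ is coercive in the sense of Definition~\ref{D:coercive}.

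The remaining step is the pointwise bound on $\rho_\alpha^2\abs{\dd h}$, which splits naturally across two regions. On $\{\tau<\delta\}$, the chain rule together with $\abs{\dd\tau}\leq c$ gives
\[
    \rho_\alpha^2\abs{\dd h}\leq c\,\tau^{2\alpha}\cdot\tau^{-1}=c\,\tau^{2\alpha-1},
\]
which is bounded on $(0,\delta)$ precisely when $\alpha\geq 1/2$; this is the only place the threshold enters. On $\{\tau\geq\delta\}$, one has $\rho_\alpha\leq 1$ and $\abs{\Psi'\circ\tau}$ is bounded, since $\Psi'$ is continuous on $[\delta,\infty)$ and vanishes beyond $t=2$. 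Combining the two regions yields $\rho_\alpha^2\abs{\dd h}\in L^\infty(M)$, which is exactly the admissibility condition.

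The only conceptual input is recognizing $-\log t$ as the natural profile to pair with the power $\rho_\alpha\sim\tau^\alpha$; once that choice is made, there is no hard step and nothing else to verify.
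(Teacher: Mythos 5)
Your proposal is correct and follows essentially the same route as the paper: the paper takes $h=\log(1/\tau)$ globally as the coercive function, gets $\rho_\alpha^2\abs{\dd h}=\tau^{2\alpha-1}\abs{\dd\tau}\leq c\,\tau^{2\alpha-1}$ outside a compact set where $\gamma_\alpha(\tau)=\tau^\alpha$, and disposes of the compact part by Remark~\ref{R:coinciding at infty}. Your flattening of the profile $\Psi$ away from $t=0$ is only a cosmetic variant that makes the bound explicit on $\{\tau\geq\delta\}$ instead of invoking compactness there; the key choice ($-\log\tau$) and the threshold mechanism ($\tau^{2\alpha-1}$ bounded iff $\alpha\geq 1/2$) are identical.
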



\begin{proof}
Observe, using~\eqref{E:tau infty}, that $h(x)= \log(1/\tau(x))$ is a coercive function.
By~\eqref{E:dtau} and since $\gamma_\alpha(t)=t^\alpha$ for $t$ near $0$, there exists a compact subset $K\subset M$ such that
\begin{equation*}
	\rho_\alpha^2(x)\abs{\dd h_x}=\tau^{2\alpha-1}(x)\abs{\dd \tau_x}\leq c\cdot\tau^{2\alpha-1}(x),\qquad x\in M\setminus K.
\end{equation*}
Since $\tau^{2\alpha-1}\in L^\infty(M) $ for $2\alpha\geq 1$, the previous inequality and Remark~\ref{R:coinciding at infty} imply the thesis.
\end{proof}


\subsection{A Friedrich inequality for operators linear over $C^\ast$-algebras}\label{SS:Friedrich inequality}
Let $(M,g)$ be an $n$-dimensional Riemannian spin manifold with associated spinor bundle $S_M$ and Dirac operator $\D_M$.
Let $\bigl(E,\nabla^E\bigr)$ be a bundle of finitely generated projective Hilbert $A$-modules with inner product and metric connection.
Denote by $\Zop\colon\Gamma(M;S_M\tensor E)\to\Gamma(M;S_M\tensor E)$ the Dirac operator $\D_M$ twisted with the bundle $E$.
We consider the following two situations:
\begin{enumerate}
    \item $S_M$ is the complex spinor bundle $\slashed S_M$, $\bigl(E,\nabla^E\bigr)$ is a Hermitian vector bundle with metric connection and $\Zop$ is the twisted complex spin Dirac operator $\slashed \D_{M,E}$ described in Example~\ref{Ex:complex dirac};
    \item $A$ is a Real $C^\ast$-algebra, $S_M$ is the $\cl_{n,0}$-linear spinor bundle $\spinor_M$, $\bigl(E,\nabla^E\bigr)$ is a bundle of finitely generated projective Real Hilbert $A$-modules with inner product and metric connection and $\Zop$ is the twisted $\cl_{n,0}$-linear Dirac operator $\spind_{M,E}$ described in Example~\ref{Ex:real Dirac}.
\end{enumerate}
When there is no danger of confusion, we will denote the bundle $S_M$ simply by $S$.
The operator $\Zop$ is related to the scalar curvature of $g$ through the classical Lichnerowicz formula
\begin{equation}\label{E:Lichnerowicz}
    \Zop^2=\nabla^\ast\nabla+\frac{1}{4}\scal_g+\mathcal R^E,
\end{equation}
where $\nabla^\ast\nabla$ is the connection Laplacian of $S\tensor E$ and $\mathcal R^ E\colon S\tensor E\to S\tensor E$ is a bundle map depending linearly on the components of the curvature tensor $F\bigl(\nabla^ E\bigr)$ of $\nabla^ E$.
In particular, if $F\bigl(\nabla^ E\bigr)=0$ in a region $\Omega\subset M$, then $\mathcal R^E =0$ on $\Omega$.
See~\cite[\S II.8]{LM89} for more details.
The next theorem provides a slight improvement of the estimate from below of $\Zop^2$ directly following from~\eqref{E:Lichnerowicz}.


\begin{theorem}\label{T:Friedrich}
Let $(M,g)$, $\big(E,\nabla^E\big)$ and $\Zop$ be as above. 
Set
\begin{equation}\label{E:n sigma}
    \bar n\defeq \frac{n}{n-1}\qquad\text{and}\qquad
    \nscal_g(x)\defeq \frac{1}{4}\scal_g(x).
\end{equation}
Then the inequality
\begin{equation}\label{E:Friedrich}
    \left<\Zop^2 u,u\right>\geq \bar n\left<\overline\scal_gu,u\right>+\bar n\left<\cal R^E u,u\right>
\end{equation}
holds for all $u\in \Gamma_\rm c(M;S\tensor E)$.
\end{theorem}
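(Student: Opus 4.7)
My plan is to adapt Friedrich's classical twistor-operator argument to the $C^*$-algebra setting, reducing the desired inequality to a pointwise algebraic identity plus positivity of a certain $A$-valued inner product.

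First, I would introduce the Penrose (twistor) operator on $S \otimes E$. Working in a local orthonormal frame $e_1,\ldots,e_n$ of $TM$, define
\[
    \Pop u \defeq \sum_{i=1}^n e_i^\flat \tensor \Bigl(\nabla_{e_i} u + \tfrac{1}{n} e_i \cdot \Zop u\Bigr),
\]
which sends sections of $S \otimes E$ to sections of $T^*M \otimes S \otimes E$. Because Clifford multiplication and the connection $\nabla$ extend naturally to the $A$-linear setting, and $\nabla$ is a metric connection compatible with the $A$-valued inner product, $\Pop$ is a well-defined first-order differential operator in both the complex case (twisted $\slashed\D_{M,E}$) and the $\cl_{n,0}$-linear case (twisted $\spind_{M,E}$). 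The key pointwise identity, obtained by expanding $\langle \Pop u, \Pop u\rangle$ using $\sum_i e_i \cdot e_i = -n$ and the identity $\sum_{i} e_i \cdot \nabla_{e_i} u = \Zop u$, is
\[
    \langle \nabla u, \nabla u\rangle = \langle \Pop u, \Pop u\rangle + \tfrac{1}{n}\langle \Zop u, \Zop u\rangle,
\]
where all inner products are $A$-valued. The computation is the same as in the classical case; the only care needed is that the $A$-valued inner product is conjugate-linear in one slot and that Clifford multiplication by real vectors is skew-adjoint, both of which hold by construction of $S \otimes E$ in the $C^*$-algebra setting.

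Next, I would integrate the Lichnerowicz formula~\eqref{E:Lichnerowicz} against $u \in \Gamma_\rm c(M;S\tensor E)$. Since $u$ has compact support and $\nabla$ is metric and $\Zop$ is formally self-adjoint, integration by parts gives
\[
    \left<\Zop^2 u,u\right> = \int_M \langle \nabla u,\nabla u\rangle + \left<\nscal_g u,u\right> + \left<\cal R^E u,u\right>,
\]
where all integrands are interpreted as elements of $A$ (or its fixed point algebra) and the integrals converge in $A$. Substituting the twistor identity yields
\[
    \left<\Zop^2 u,u\right> = \int_M \langle \Pop u,\Pop u\rangle\, + \tfrac{1}{n}\left<\Zop u,\Zop u\right> + \left<\nscal_g u,u\right> + \left<\cal R^E u,u\right>.
\]

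Finally, I would use positivity. The pointwise element $\langle \Pop u,\Pop u\rangle_x$ is a positive element of $A$, hence its integral is positive in $A$; similarly $\langle \Zop u, \Zop u\rangle = \langle \Zop^2 u, u\rangle$ after integration by parts. Dropping the nonnegative twistor term and rearranging,
\[
    \tfrac{n-1}{n}\left<\Zop^2 u,u\right> \geq \left<\nscal_g u,u\right> + \left<\cal R^E u,u\right>,
\]
and multiplication by $\bar n = n/(n-1)$ gives~\eqref{E:Friedrich}. The main point requiring care is the verification of the pointwise twistor identity in the $A$-linear case and the fact that $\langle \Pop u, \Pop u\rangle \geq 0$ in $A$; both follow from the general theory of Hilbert $A$-modules and the naturality of Clifford-algebraic identities, so I do not expect any serious obstruction beyond bookkeeping.
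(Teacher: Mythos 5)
Your proposal is correct, but at the crucial step it follows a different route from the paper. Both arguments share the same skeleton: obtain the pointwise estimate $\left<\Zop u,\Zop u\right>_x\leq n\left<\nabla u,\nabla u\right>_x$, integrate, insert the Lichnerowicz formula~\eqref{E:Lichnerowicz}, and rearrange using $\left<\Zop u,\Zop u\right>=\left<\Zop^2u,u\right>$ for compactly supported $u$. You derive that estimate from the Penrose/twistor decomposition $\left<\nabla u,\nabla u\right>=\left<\Pop u,\Pop u\right>+\tfrac1n\left<\Zop u,\Zop u\right>$ together with positivity of the twistor term, which is the classical Friedrich argument; the paper instead avoids the twistor operator altogether and proves an elementary Cauchy--Schwarz-type inequality for Hilbert $A$-modules (Lemma~\ref{L:Friedrich}), namely $\bigl(\sum_{i} x_i\bigm|\sum_{i}x_i\bigr)\leq N\sum_{i}(x_i\mid x_i)$, applied to $x_i=\cc(e_i)\nabla_{e_i}u$. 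Your route yields an exact identity with a manifestly nonnegative remainder (slightly stronger information, and since the identity is pointwise you can drop $\left<\Pop u,\Pop u\right>_x\geq 0$ before integrating, sidestepping any discussion of $A$-valued integrals of positive elements), but it obliges you to check in the Hilbert $A$-module setting that Clifford multiplication by unit vectors is unitary/skew-adjoint for the relevant $A$-valued inner product in both cases (I) and (II) and that $\left<\Pop u,\Pop u\right>_x$ is a positive element of $A$; these are true and are essentially the same facts the paper uses, so this is only bookkeeping. The paper's abstract lemma needs nothing but isometry of each $\cc(e_i)$ and no auxiliary operator on $T^\ast M\tensor S\tensor E$, so it is marginally more economical; also note that the symbol $\Pop$ is already reserved in the paper for the generalized Gromov--Lawson operator, so you should rename your twistor operator to avoid a clash.
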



\noindent In order to prove Theorem~\ref{T:Friedrich}, we first establish the following abstract inequality for Hilbert $C^\ast$-modules.


\begin{lemma}\label{L:Friedrich}
Let $H$ be a Hilbert module over a $C^\ast$-algebra $A$.
For $x_1,\ldots, x_N\in H$, we have
\[
    \left(\sum_{i=1}^N x_i\Biggm| \sum_{i=1}^Nx_i\right)\leq N\sum_{i=1}^N(x_i\mid x_i)
\]
where $(\cdot\mid\cdot)$ is the $A$-valued inner product of $H$.
\end{lemma}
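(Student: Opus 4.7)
The plan is to prove the lemma by identifying the defect
$$
D \defeq N\sum_{i=1}^N (x_i\mid x_i) - \left(\sum_{i=1}^N x_i \Bigm| \sum_{j=1}^N x_j\right)
$$
with a manifestly positive element of $A$. Concretely, I will establish the purely algebraic identity
$$
D \;=\; \sum_{1\leq i<j\leq N}\bigl(x_i-x_j\bigm| x_i-x_j\bigr),
$$
which is a finite sum of inner products of the form $(y\mid y)$ with $y\in H$. Since the $A$-valued inner product on a Hilbert $A$-module is positive by definition, each summand, and hence $D$ itself, is positive in $A$. This is exactly the inequality claimed.

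To establish the identity, I would expand the right-hand side using $A$-sesquilinearity of the inner product. Each diagonal term $(x_k\mid x_k)$ appears in every pair $\{i,j\}$ containing $k$, of which there are $N-1$, contributing $(N-1)\sum_k (x_k\mid x_k)$. The cross terms contribute $-\sum_{i\neq j}(x_i\mid x_j)$, and adding and subtracting the diagonal of this double sum rewrites it as $-\bigl(\sum_i x_i \bigm| \sum_j x_j\bigr)+\sum_k(x_k\mid x_k)$. Summing the two pieces yields exactly $D$.

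I do not expect any real obstacle: the lemma is the $C^\ast$-module analogue of the elementary Hilbert-space inequality $\|a_1+\cdots+a_N\|^2 \leq N(\|a_1\|^2+\cdots+\|a_N\|^2)$, and the ``sum of squared differences'' identity used in the scalar case transfers directly to the Hilbert $A$-module setting since only sesquilinearity and the defining positivity $(y\mid y)\geq 0$ are invoked. A more conceptual alternative would be to view $\sum_i x_i$ as a Cauchy--Schwarz pairing of $(x_1,\ldots,x_N)\in H^N$ with $(1,\ldots,1)\in A^N$, but the direct identity is cleaner, self-contained, and avoids any appeal to auxiliary machinery for Hilbert $A$-modules.
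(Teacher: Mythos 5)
Your proposal is correct and rests on exactly the same mechanism as the paper's proof: the paper bounds each cross term via $(x_i\mid x_j)+(x_j\mid x_i)\leq(x_i\mid x_i)+(x_j\mid x_j)$, which is just positivity of $(x_i-x_j\mid x_i-x_j)$, while you aggregate those same pairwise positivity statements into the single identity $N\sum_i(x_i\mid x_i)-\bigl(\sum_i x_i\bigm|\sum_j x_j\bigr)=\sum_{i<j}(x_i-x_j\mid x_i-x_j)$. This is only a cosmetic repackaging, so your argument matches the paper's in substance.
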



\begin{proof}
For $x$, $y\in H$, we have
\begin{equation}\label{E:xy+yx<x^2+y^2}
    (x\mid y)+(y\mid x)\leq(x-y\mid x-y)+(x\mid y)+(y\mid x)= (x\mid x)+(y\mid y).
\end{equation}
Therefore,
\[
    \Bigg(\sum_{i=1}^N x_i\Bigg\vert \sum_{i=1}^Nx_i\Bigg)
    =\sum_{i=1}^N(x_i\mid x_i)+\sum_{i<j}\bigl\{(x_i\mid x_j)+(x_j\mid x_i)\bigr\}
    \leq N\sum_{i=1}^N(x_i\mid x_i)
\]
where the last inequality is obtained by applying Inequality~\eqref{E:xy+yx<x^2+y^2} to the terms $(x_i\mid x_j)+(x_j\mid x_i)$.
\end{proof}


\begin{proof}[Proof of Theorem~\ref{T:Friedrich}]
Let $u\in \Gamma_\rm c(M;S\tensor E)$.
Recall that the operator $\Zop$ has the local expression
\[
    \Zop u=\sum_{i=1}^n\cc(e_i)\nabla_{e_i}u
\]
where $\left\{e_1,\ldots,e_n\right\}$ is an orthonormal basis of $T_xM$, $\cc(e_i)$ is Clifford multiplication by $e_i$, and $\nabla$ is the connection on $S\tensor E$ induced by the connections on $S$ and $E$.
At a point $x\in M$, using Lemma~\ref{L:Friedrich} we obtain
\begin{multline*}
    \left<\Zop u,\Zop u\right>_x=\left<\sum_{i=1}^n\cc(e_i)\nabla_{e_i}u,\sum_{i=1}^n\cc(e_i)\nabla_{e_i}u\right>_x\\
    \leq n\sum_{i=1}^n\left<\cc(e_i)\nabla_{e_i}u,\cc(e_i)\nabla_{e_i}u\right>_x
    = n\sum_{i=1}^n\left<\nabla_{e_i}u,\nabla_{e_i}u\right>_x    
    = n\left<\nabla u,\nabla u\right>_x.
\end{multline*}
By integrating the previous inequality, we get
\begin{equation*}
    \left<\Zop^2u,u\right>=\left<\Zop u,\Zop u\right>\leq n\left<\nabla u,\nabla u\right>=n\left<\nabla^\ast\nabla u,u\right>.
\end{equation*}
Using the last inequality together with the Lichnerowicz formula~\eqref{E:Lichnerowicz}, we deduce
\[
    \left<\Zop^2 u,u\right>
    \geq \frac{1}{n}\left<\Zop^2 u,u\right>+\left<\nscal_g u,u\right>+\left<\cal R^E u,u\right>,
\]
from which Inequality~\eqref{E:Friedrich} follows.
\end{proof}


\subsection{A Lichnerowicz-type inequality for the rescaled operator}\label{SS:rescaled Lichnerowicz}
Let $(M,g)$, $S$, $E$ and $\Zop$ be as in Subsection~\ref{SS:Friedrich inequality}.
For a smooth function $\rho$, let $\Zop_\rho$ be the rescaled operator defined by~\eqref{E:rescaled operator}.
In the next proposition, we state a Lichnerowicz-type inequality for the rescaled operator.


\begin{proposition}\label{P:rescaled Lichnerowitcz}
Suppose the scalar curvature of $g$ is bounded from below by a constant $\sigma>0$.
Let $\rho\colon M\to(0,\infty)$ be a smooth function and let $\bar n$, $\bar\sigma$ be the constants defined in~\eqref{E:n sigma}.
Then the inequality
\begin{equation}
	\bigl<\Zop_\rho^2u,u\bigr> \geq 
	\frac{\bar n\omega}{1+\omega}\left<\nscal_g\rho^4u,u\right>+\frac{\bar n\omega}{1+\omega}\bigl<\cal R^E\rho^4u,u\bigr>
	-\omega\bigl<\rho^2\abs{\dd\rho}^2u,u\bigr>
\end{equation}
holds for every $\omega>0$ and every $u\in\Gamma_\rm c(M;S\tensor E)$.
\end{proposition}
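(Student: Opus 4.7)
The plan is to reduce the rescaled estimate to the unrescaled Friedrich inequality of Theorem~\ref{T:Friedrich} by a convenient reassembly of $\Zop_\rho u$ together with a Peter--Paul splitting.

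First I would exploit the Leibniz rule. Since the principal symbol of $\Zop$ is Clifford multiplication, one has $[\Zop,\xi] = \cc(\dd\xi)$ for every scalar $\xi \in C^\infty(M)$. Applied twice, and using $\dd(\rho^2) = 2\rho\,\dd\rho$, this yields the pointwise identity
\begin{equation*}
    \Zop_\rho u \;=\; \rho\,\Zop(\rho u) \;=\; \rho^2 \Zop u + \rho\,\cc(\dd\rho) u \;=\; \Zop(\rho^2 u) - \rho\,\cc(\dd\rho) u.
\end{equation*}
This is the key algebraic manipulation: the first summand on the right is something to which Theorem~\ref{T:Friedrich} applies, while the second is of order zero with well-controlled pointwise norm.

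Next, because $\Zop_\rho$ is formally self-adjoint on $\Gamma_\rm c(M;S\otimes E)$, we have $\langle \Zop_\rho^2 u,u\rangle = \langle \Zop_\rho u,\Zop_\rho u\rangle$ in the Hilbert $A$-module $L^2(M;S\otimes E)$. To this I would apply the Peter--Paul inequality
\begin{equation*}
    \langle a-b,\,a-b\rangle \;\geq\; \frac{\omega}{1+\omega}\langle a,a\rangle - \omega\langle b,b\rangle \qquad (\omega>0),
\end{equation*}
which holds in any Hilbert $A$-module by expanding the positivity of $(\sqrt{\epsilon}\,a - \epsilon^{-1/2} b \mid \sqrt{\epsilon}\,a - \epsilon^{-1/2} b)$ with $\epsilon = 1/(1+\omega)$. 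Taking $a = \Zop(\rho^2 u)$ and $b = \rho\,\cc(\dd\rho) u$ reduces the desired estimate to two separate bounds.

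For the first term, Theorem~\ref{T:Friedrich} applied to $\rho^2 u \in \Gamma_\rm c(M;S\otimes E)$ gives $\langle \Zop(\rho^2 u),\Zop(\rho^2 u)\rangle \geq \bar n\langle \nscal_g\rho^2 u,\rho^2 u\rangle + \bar n\langle \cal R^E \rho^2 u,\rho^2 u\rangle$; since $\rho$ is a real scalar function it commutes past both $\nscal_g$ and $\cal R^E$, so this becomes $\bar n\langle \nscal_g \rho^4 u,u\rangle + \bar n\langle \cal R^E \rho^4 u,u\rangle$. For the second term, Clifford multiplication by a covector $\eta$ satisfies $\cc(\eta)^\ast \cc(\eta) = |\eta|^2 \Id$, hence $\langle \rho\,\cc(\dd\rho) u,\,\rho\,\cc(\dd\rho) u\rangle = \langle \rho^2 |\dd\rho|^2 u,u\rangle$. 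Assembling the three displays yields the stated inequality. I do not anticipate any real obstacle; the only mild subtlety is that the Peter--Paul step must be carried out for the $A$-valued inner product rather than a scalar one, but this goes through verbatim by positivity in the module.
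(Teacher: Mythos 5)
Your argument is correct and is essentially the paper's own proof: the identity $\Zop_\rho u=\Zop(\rho^2u)-\rho\,\cc(\dd\rho)u$ combined with your Peter--Paul inequality is exactly Lemma~\ref{L:BK} specialized to $\xi=\rho$, $v=\rho u$ (same constants $\tfrac{\omega}{1+\omega}$ and $\omega$), after which the paper likewise applies Theorem~\ref{T:Friedrich} to $\rho^2u$ and uses $\cc(\dd\rho)^\ast\cc(\dd\rho)=\abs{\dd\rho}^2$. No gap; nothing further is needed.
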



\noindent The proof of this proposition is based on the following lemma.


\begin{lemma}\label{L:BK}
Let $\xi\colon M\to\RR$ be a smooth function.
Then the inequality
\begin{equation}\label{E:BL}
	\bigl<\xi\Zop(v),\xi\Zop(v)\bigr> \geq 
	\frac{\omega}{1+\omega}\bigl<\Zop(\xi v),\Zop(\xi v)\bigr>
	-\omega\bigl<\abs{\dd\xi}^2 v,v\bigr>
\end{equation}
holds for every $\omega>0$ and every $v\in\Gamma_\rm c(M;S\tensor E)$.
\end{lemma}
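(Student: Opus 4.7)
The plan is to exploit the commutator decomposition $\Zop(\xi v)=\xi\,\Zop(v)+[\Zop,\xi]v$, combined with the pointwise symbol bound~\eqref{E:uniformly bounded symbol} and a Hilbert-module analogue of the elementary scalar inequality $|a+b|^2\le(1+\omega^{-1})|a|^2+(1+\omega)|b|^2$. The argument uses only the order-one character and the symbol estimate of $\Zop$; the spin structure, the twisting bundle, and the Lichnerowicz formula~\eqref{E:Lichnerowicz} do not enter.

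First I would establish the following abstract inequality in a Hilbert $A$-module $H$: for any $a,b\in H$ and any $\omega>0$,
\[
    (a+b\mid a+b)\le (1+\omega^{-1})(a\mid a)+(1+\omega)(b\mid b).
\]
This is immediate from the positivity of $\bigl(\omega^{-1/2}a-\omega^{1/2}b\bigm|\omega^{-1/2}a-\omega^{1/2}b\bigr)\ge 0$, which after expansion yields the cross-term bound $(a\mid b)+(b\mid a)\le\omega^{-1}(a\mid a)+\omega(b\mid b)$; inserting this into $(a+b\mid a+b)=(a\mid a)+(a\mid b)+(b\mid a)+(b\mid b)$ gives the claim. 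This is the $C^\ast$-module substitute for scalar Cauchy--Schwarz, which is not directly available because the inner products are $A$-valued.

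Next I apply this fiberwise with $a\defeq\xi\,\Zop(v)$ and $b\defeq[\Zop,\xi]v$, so that $a+b=\Zop(\xi v)$. Integrating over $M$ and rearranging (using $\tfrac{1}{1+\omega^{-1}}=\tfrac{\omega}{1+\omega}$ and $\tfrac{1+\omega}{1+\omega^{-1}}=\omega$) produces
\[
    \bigl<\xi\Zop(v),\xi\Zop(v)\bigr>\ge\frac{\omega}{1+\omega}\bigl<\Zop(\xi v),\Zop(\xi v)\bigr>-\omega\bigl<[\Zop,\xi]v,[\Zop,\xi]v\bigr>.
\]

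Finally, the symbol bound~\eqref{E:uniformly bounded symbol} says that the fiberwise adjointable map $[\Zop,\xi]_x$ has operator norm at most $\abs{\dd\xi_x}$, so pointwise
$\bigl([\Zop,\xi]v\bigm|[\Zop,\xi]v\bigr)_x\le \abs{\dd\xi_x}^2\,(v\mid v)_x$; integrating yields $\bigl<[\Zop,\xi]v,[\Zop,\xi]v\bigr>\le\bigl<\abs{\dd\xi}^2 v,v\bigr>$. Substituting this into the previous display delivers~\eqref{E:BL}. I do not anticipate any serious obstacle; the only delicate point is the $C^\ast$-module nature of the inner product, which forces the $(a+b\mid a+b)$ estimate above to be proved from positivity rather than from scalar norms.
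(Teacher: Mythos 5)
Your proposal is correct and takes essentially the same route as the paper: the paper likewise expands the discrepancy between $\Zop(\xi v)$ and $\xi\Zop(v)$ (written there via the Clifford term $\cc(\dd\xi)$ rather than the commutator $[\Zop,\xi]$) and controls the cross term by the same weighted positivity trick, namely $0\le\bigl<\omega^{-1/2}\xi\Zop(v)-\omega^{1/2}\cc(\dd\xi)v,\,\omega^{-1/2}\xi\Zop(v)-\omega^{1/2}\cc(\dd\xi)v\bigr>$, which is exactly the content of your abstract two-term inequality. The only cosmetic difference is that you invoke the symbol bound $\norm{[\Zop,\xi]_x}\le\abs{\dd\xi_x}$ where the paper uses the exact relation $\cc(\dd\xi)^\ast\cc(\dd\xi)=\abs{\dd\xi}^2$; both yield the same estimate on the error term, so the arguments coincide.
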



\begin{proof}
By direct computation,
\[
    \Zop\xi^2\Zop=\xi\Zop^2\xi-\abs{\dd\xi}^2-\Zop\xi\cc(\dd\xi)+\cc(\dd\xi)\xi\Zop.
\]
Hence,
\begin{multline}\label{E:BL11}
    \bigl<\xi\Zop(v),\xi\Zop(v)\bigr>=\bigl<\Zop(\xi v),\Zop(\xi v)\bigr>-\bigl<\abs{\dd\xi}^2 v,v\bigr>\\
    -\bigl<\cc(\dd\xi)v,\xi\Zop v\bigr>-\bigl<\xi\Zop v,\cc(\dd\xi)v\bigr>.
\end{multline}
Fix $\omega>0$ and observe that
\begin{multline*}
    0\leq\biggl<\frac{\xi}{\sqrt{\omega}}\Zop(v)-\sqrt{\omega}\cc(\dd\xi)v,\frac{\xi}{\sqrt{\omega}}\Zop(v)-\sqrt{\omega}\cc(\dd\xi)v\biggr>\\
    =\frac{1}{\omega}\bigl<\xi\Zop(v),\xi\Zop(v)\bigr>+{\omega}\bigl<\abs{\dd\xi}^2 v,v\bigr>
    -\bigl<\cc(\dd\xi)v,\xi\Zop v\bigr>-\bigl<\xi\Zop v,\cc(\dd\xi)v\bigr>.
\end{multline*}
This inequality together with~\eqref{E:BL11} yields
\[
    \bigl<\xi\Zop(v),\xi\Zop(v)\bigr>\geq \bigl<\Zop(\xi v),\Zop(\xi v)\bigr>-\frac{1}{\omega}\bigl<\xi\Zop(v),\xi\Zop(v)\bigr>-(1+\omega)\bigl<\abs{\dd\xi}^2 v,v\bigr>,
\]
which implies~\eqref{E:BL}.
\end{proof}


\begin{proof}[Proof of Proposition~\ref{P:rescaled Lichnerowitcz}]
It follows from Theorem~\ref{T:Friedrich} and Lemma~\ref{L:BK}, with $\xi=\rho$ and $v=\rho u$.
\end{proof}


\section{Generalized Gromov-Lawson operators}\label{S:GL}
In this section, we study the geometric situation when $M$ is a Riemannian spin manifold and $(E,F)$ is a pair of bundles with isomorphic typical fibers and whose supports are contained in the interior of a compact submanifold with boundary $L\subset M$.
In Subsection~\ref{SS:localized obstructions}, we define the class $\relind(M;E,F)$ as the index of a suitable elliptic differential operator $\D_{L_\rm D}^{E,F}$ over the double $L_\rm D$ of $L$.
In Subsection~\ref{SS:compatible potentials}, we use an admissible rescaling function $\rho$ and a potential $\phi$ to define a Fredholm operator $\Pop_{\rho,\phi}^{E,F}$ on $M$.
Finally, in Subsection~\ref{SS:index theorem} we show that the index of $\Pop_{\rho,\phi}^{E,F}$ coincides with $\relind(M;E,F)$.


\subsection{Localized Dirac obstructions}\label{SS:localized obstructions}
Let $(M,g)$ be an $n$-dimensional Riemannian spin manifold with associated $\ZZ_2$-graded spinor bundle $S_M=S^+_M\oplus S^-_M$.
Let $\bigl(E,\nabla^E\bigr)$ be a bundle of finitely generated projective Hilbert $A$-modules with inner product and metric connection on $M$.
Denote by $\D_{M,E}\colon\Gamma(M;S_M\tensor E)\to\Gamma(M;S_M\tensor E)$ the spin Dirac operator twisted with the bundle $E$.
Observe that $\D_{M,E}$ is odd with respect to the grading
\begin{equation}\label{E:S tensor E grading}
     S_M\tensor E= \bigl(S_M^+\tensor E\bigr)\oplus \bigl(S_M^-\tensor E\bigr).
\end{equation}
We consider the following two situations.
\begin{enumerate}[label=(\Roman*)]
    \item $M$ is even dimensional and $\bigl(E,\nabla^E\bigr)$ is a Hermitian vector bundle with metric connection.
    In this case, $S_M$ is the complex spinor bundle $\slashed S_M$ and $\D_{M,E}$ is the twisted complex spin Dirac operator $\slashed\D_{M,E}$ described in Example~\ref{Ex:complex dirac}.
    \item $A$ is a Real $C^\ast$-algebra, and $E$ is a bundle of finitely generated projective Real Hilbert $A$-modules with inner product and metric connection.
    In this case, $S_M$ is the $\cl_{n,0}$-linear spinor bundle $\spinor_M$ and $\D_{M,E}$ is the twisted $\cl_{n,0}$-linear spin Dirac operator $\spind_{M,E}$ described in Example~\ref{Ex:real Dirac}.
\end{enumerate}
When there is no danger of confusion, we will use the notation $S$ and $S^\pm$ instead of $S_M$ and $S_M^\pm$.


Let $\bigl(F,\nabla^F\bigr)$ be a second bundle of finitely generated projective Hilbert $A$-modules with inner product and metric connection over $M$.
We make the following assumption.


\begin{assumption}\label{A:GL}
\emph{The bundles have isomorphic typical fibers and are trivializable at infinity}.
This means that there exist a finitely generated projective Hilbert $A$-module $\cal V$ and a compact subset $K\subset M$ such that
\[
    \bigl(E,\nabla^E\bigr)\big|_{M\setminus K}\cong(F,\nabla^F)\big|_{M\setminus K}\cong \bigl(\underline {\cal V},\dd_{\underline {\cal V}}\bigr)\big|_{M\setminus K}
\]
where $\underline{\cal V}\to M$ denotes the trivial bundle with fiber $\cal V$ and $\dd_{\underline {\cal V}}$ denotes the trivial connection on $\underline{\cal V}$. 
In this case, we say that $K$ is an \emph{essential support} for $(E,F)$ and that $M\setminus K$ is a \emph{neighborhood of infinity}.
\end{assumption}


In this setting, we define a relative index following Gromov and Lawson~\cite{GL83}.
Let $L\subset M$ be a smooth compact submanifold with boundary, whose interior contains an essential support of $(E,F)$.
Deform the metric and the spinor bundle in such a way that they have a product structure in a tubular neighborhood of $\partial L$.
Form the double $L_\rm D\defeq L\cup_{\partial L} L^-$ of $L$, where $L^-$ denotes the manifold $L$ with opposite orientation.
The Riemannian metric $g$ induces a Riemannian metric $g_1$ on $L_\rm D$ which is symmetric with respect to $\partial L$ and has a product structure in a tubular neighborhood of $\partial L$.
The double $L_\rm D$ is a closed manifold carrying a natural spin structure induced by the spin structure of $L$.
The associated spinor bundle $S_{L_\rm D}$ has a reflection symmetry with respect to $\partial L$.
Using Assumption~\eqref{A:GL}, define $\bigl(V(E,F),\nabla^{V(E,F)}\bigr)$ as the bundle with connection on $L_\rm D$ coinciding with $\bigl(E,\nabla^E\bigr)$ over $L$ and with $\bigl(F,\nabla^F\bigr)$ over $L^-$.
Denote by $\D_{L_{\D}}^{E,F}$ the Dirac operator $\D_{L_\rm D}$ twisted with the bundle $V(E,F)$.
In the next lemma, we collect some properties of the index of the operator $\D_{L_\rm D}^{E,F}$.


\begin{lemma}\label{L:index on double}
Let $(E,F)$ and $(G,H)$ be two pairs of bundles of finitely generated projective Hilbert $A$-modules with inner product and metric connection over $M$ satisfying Assumption~\ref{A:GL}.
Let $L\subset M$ be a compact submanifold with boundary whose interior contains an essential support of both $(E,F)$ and $(G,H)$.
Then
\begin{equation}\label{E:index on double1}
    \ind\left(\D_{L_D}^{E,E}\right)=0;
\end{equation}
\begin{equation}\label{E:index on double2}
    \ind\left(\D_{L_D}^{E,F}\right)+\ind\left(\D_{L_D}^{H,G}\right)
    =\ind\left(\D_{L_D}^{E,G}\right)+\ind\left(\D_{L_D}^{H,F}\right);
\end{equation}
and
\begin{equation}\label{E:index on double3}
    \ind\left(\D_{L_D}^{E,F}\right)+\ind\left(\D_{L_D}^{F,E}\right)=0.
\end{equation}
\end{lemma}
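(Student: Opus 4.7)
My plan is to establish the three identities in the order~\eqref{E:index on double2}, \eqref{E:index on double1}, \eqref{E:index on double3}. Identity~\eqref{E:index on double2} is a direct application of the cut-and-paste formula (Theorem~\ref{T:relative}); identity~\eqref{E:index on double1} requires a separate vanishing argument; and~\eqref{E:index on double3} is a formal corollary of the first two.

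For~\eqref{E:index on double2}, I would apply Theorem~\ref{T:relative} with $M_1=M_2=L_\rm D$, bundles $S_{L_\rm D}\tensor V(E,F)$ and $S_{L_\rm D}\tensor V(H,G)$, and separating hypersurface $\partial L$. The product structure of the metric and spinor bundle near $\partial L$ makes the ambient geometries agree there, and since the essential supports of both $(E,F)$ and $(H,G)$ lie in the interior of $L$, Assumption~\ref{A:GL} trivializes $V(E,F)$ and $V(H,G)$ to $\underline{\cal V}$ in a neighborhood of $\partial L$; Assumption~\ref{A:cut-and-paste} is therefore met with identity identifications $\Gamma$ and $\widetilde\Gamma$. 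The two cut-and-pasted manifolds are again copies of $L_\rm D$ carrying respectively $V(E,G)$ and $V(H,F)$, so Theorem~\ref{T:relative} yields~\eqref{E:index on double2}. (The completeness and uniform-positivity-at-infinity hypotheses are trivially satisfied because $L_\rm D$ is closed.) Identity~\eqref{E:index on double3} then follows by setting $H=F$ and $G=E$ in~\eqref{E:index on double2} and invoking~\eqref{E:index on double1} on the two diagonal terms $\ind(\D_{L_\rm D}^{E,E})$ and $\ind(\D_{L_\rm D}^{F,F})$.

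The core content is~\eqref{E:index on double1}. The bundle $V(E,E)$ is globally well-defined on $L_\rm D$, and my strategy is to exhibit $L_\rm D$ as the boundary of the spin cobordism $W=L\times[0,1]$ (with corners smoothed), over which $V(E,E)$ extends as the pullback of $E$ along the projection $W\to L$; cobordism invariance of the twisted index for Dirac operators on bundles of finitely generated projective Hilbert $A$-modules then forces $\ind(\D_{L_\rm D}^{E,E})=0$. An alternative route, staying on $L_\rm D$ itself, is to use the orientation-reversing isometric involution $\tau$ swapping $L$ and $L^-$ and fixing $\partial L$: since $V(E,E)$ with its connection is $\tau$-invariant, and the lift of $\tau$ to the spinor bundle reverses the relevant grading in both case~(I) and case~(II) of Subsection~\ref{SS:localized obstructions}, the index must equal its own negative in the appropriate $K$-theory group and hence vanishes.

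The main obstacle is~\eqref{E:index on double1}, specifically verifying that either the cobordism-invariance or the involution argument is compatible with the Real $C^\ast$-algebra setting of case~(II) and with the boundary-product deformation imposed on the Riemannian metric and spinor bundle; once this is in place, everything else reduces to bookkeeping on top of Theorem~\ref{T:relative}.
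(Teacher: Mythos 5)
Your treatment of \eqref{E:index on double2} and \eqref{E:index on double3} coincides with the paper's: the paper also proves \eqref{E:index on double2} by applying Theorem~\ref{T:relative} to the two doubled operators with respect to the partition $L\cup_{\partial L}L^{-}$ (Assumption~\ref{A:cut-and-paste} being satisfied via the trivializations by $\underline{\cal V}$ near $\partial L$, exactly as you note), and it deduces \eqref{E:index on double3} formally from \eqref{E:index on double1} and \eqref{E:index on double2}, as you do by taking $G=E$, $H=F$. The only divergence is \eqref{E:index on double1}: the paper settles it in one line by invoking the symmetry of $\D_{L_{\mathrm D}}^{E,E}$ with respect to the separating hypersurface $\partial L$ --- essentially your second (involution) argument --- whereas your primary argument is cobordism invariance applied to $L_{\mathrm D}=\partial\bigl(L\times[0,1]\bigr)$ with the pulled-back bundle. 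The cobordism route is perfectly legitimate and arguably more robust, provided you take as input the bordism invariance of the index of $\cl_{n,0}$-linear Dirac operators twisted by bundles of Hilbert $A$-modules (standard, e.g.\ it underlies bordism invariance of the Rosenberg index, but it is not proved in this paper); the product-structure deformation near $\partial L$ is harmless since the index is unchanged under homotopies of the metric and connection.

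One concrete caution about your involution variant, which confirms the concern you flag yourself: the inference ``the index equals its own negative, hence it vanishes'' is valid in case~(I), where the index lies in $\ZZ$, but not in case~(II), since $\KO_n(A)$ may contain $2$-torsion (already $\KO_1(\RR)\cong\ZZ/2$), so knowing that twice the class vanishes does not force the class itself to vanish. Hence in the Real setting the reflection argument needs a more careful implementation than ``orientation reversal flips the sign of the index'' (the same caveat applies when reading the paper's one-line proof of \eqref{E:index on double1}); if you rely instead on your cobordism argument, this issue does not arise.
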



\begin{proof}
Identity~\eqref{E:index on double1} follows from the fact that the operator $\D_{L_\rm D}^{E,E}$ is symmetric with respect to the separating hypersurface $\partial L$.
For Identity~\eqref{E:index on double2}, consider the partition $L\cup_{\partial L}L^-$ and apply Theorem~\ref{T:relative} to the operators $\D_{L_D}^{E,F}$ and $\D_{L_D}^{G,H}$.
Finally, Identity~\eqref{E:index on double3} follows from~\eqref{E:index on double1} and~\eqref{E:index on double2}.
\end{proof}


Observe that the index of $\D_{L_{\D}}^{E,F}$ does not depend on the metric.
The next proposition states that it does not depend on the choice of the submanifold $L$.


\begin{proposition}\label{P:relind}
Let $(E,\nabla^E)$ and $(F,\nabla^F)$ be a pair of bundles of finitely generated projective Hilbert $A$-modules with inner product and metric connection over $M$ satisfying Assumption~\ref{A:GL}.
Suppose $L$ and $L^\prime$ are smooth compact submanifolds with boundary of $M$ whose interiors contain an essential support of $(E,F)$.
Then the indices of $\D_{L_{\D}}^{E,F}$ and $\D_{L^\prime_{\D}}^{E,F}$ coincide. 
\end{proposition}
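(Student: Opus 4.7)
My plan is to reduce first to the nested case $L\subset L'$ and then to apply the cut-and-paste formula (Theorem~\ref{T:relative}) in a configuration that replaces the complicated ``middle piece'' of $L'_D$ with a trivial cylindrical neck. For the reduction, I would take a smoothed compact thickening $L''$ of $L\cup L'$ (say, a sublevel set of an exhaustion function at a regular value), so that $L,L'\subset (L'')^\circ$ and $(L'')^\circ$ contains the union of an essential support of $(E,F)$ inside $L^\circ$ and one inside $(L')^\circ$; in particular $L''$ satisfies the hypotheses. The general case would then follow from two applications of the nested case: $\ind\bigl(\D_{L_D}^{E,F}\bigr)=\ind\bigl(\D_{L''_D}^{E,F}\bigr)=\ind\bigl(\D_{L'_D}^{E,F}\bigr)$.

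Assuming $L\subset (L')^\circ$, I would set $W\defeq\overline{L'\setminus L^\circ}$ and $\widetilde W\defeq W\cup_{\partial L'} W^-$, a compact manifold with $\partial\widetilde W=\partial L\sqcup\partial L^-$, and deform the metric and bundles near $\partial L$ and $\partial L'$ to product structure. I would then apply Theorem~\ref{T:relative} with the following two inputs. For $M_1$, take $L'_D$ with the bundle $V(E,F)$, cut along the separating hypersurface $N_1=\partial L\sqcup\partial L^-$ consisting of the two copies of $\partial L$ inside $L'$ and $(L')^-$; this produces $U_1=L\sqcup L^-$ and $V_1=\widetilde W$. For $M_2$, take the closed manifold obtained by attaching a cylinder $C\defeq\partial L\times[0,1]$ to $\widetilde W$ along $\partial\widetilde W$ (matching the two ends of $C$ to the two boundary components of $\widetilde W$), equipped throughout with the trivial bundle $\underline{\cal V}$ and cut along $N_2=\partial C$; this splits $M_2$ into $U_2=\widetilde W$ and $V_2=C$, and absorbing the cylinder identifies $M_2$ with the double $W_D$. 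The crucial point, which is what makes Assumption~\ref{A:cut-and-paste} applicable, is that $\widetilde W$ lies in a neighborhood of infinity for $(E,F)$, so $V(E,F)|_{\widetilde W}\cong\underline{\cal V}$ canonically; thus the bundles on $M_1$ and $M_2$ are trivially $\underline{\cal V}$ in tubular neighborhoods of $N_1$ and $N_2$, and the required bundle isometry is the obvious one.

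The resulting cut-and-paste manifolds are $M_3=U_1\cup_N V_2=(L\sqcup L^-)\cup C$, which upon cylinder collapse is $L_D$ with bundle $V(E,F)$, so $\ind(\Pop_3)=\ind\bigl(\D_{L_D}^{E,F}\bigr)$; and $M_4=U_2\cup_N V_1=\widetilde W\cup_{\partial\widetilde W}\widetilde W$, diffeomorphic to $\widetilde W_D$ carrying the trivial bundle. Applying identity~\eqref{E:index on double1} of Lemma~\ref{L:index on double} to the doubles $W_D$ and $\widetilde W_D$ (equipped with the trivial bundle) will yield $\ind(\Pop_2)=\ind(\Pop_4)=0$, and Theorem~\ref{T:relative} will then give
\[
\ind\bigl(\D_{L'_D}^{E,F}\bigr)+0\;=\;\ind\bigl(\D_{L_D}^{E,F}\bigr)+0,
\]
which is the conclusion.

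The step I expect to be most delicate is verifying Assumption~\ref{A:cut-and-paste} carefully: the product-structure deformations and the trivializations on both sides of the doubling must be chosen compatibly, so that a single bundle isometry simultaneously transports the operator on a tubular neighborhood of $N_1$ (whose components sit in $L'$ and in $(L')^-$) to the operator near $N_2$ (the two ends of the cylindrical neck of $M_2$). Since all the data near $\partial L$ already lies in the trivialization region for $(E,F)$ this can be arranged at no cost, but the bookkeeping for the orientations and for the identification of the two ends of $C$ with the two boundary circles of $\widetilde W$ needs to be set up explicitly before invoking the abstract cut-and-paste theorem.
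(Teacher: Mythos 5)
Your proof is correct, and it rests on the same two pillars as the paper's argument --- reduction to the nested case $L\subset (L')^{\rm o}$ via a common thickening $L''$, followed by one application of Theorem~\ref{T:relative} together with the reflection-symmetry vanishing of~\eqref{E:index on double1} --- but the cut-and-paste configuration you choose is genuinely different. The paper takes as inputs $\D_{L'_{\D}}^{E,F}$ on $L'_{\D}$ and $\D_{L_{\D}}^{F,F}$ on $L_{\D}$, cuts each along a \emph{single} copy of $\partial L$, and the swapped manifolds come out directly as $L'_{\D}$ with $V(F,F)$ and $L_{\D}$ with $V(E,F)$, the vanishing inputs being $\ind\bigl(\D_{L_{\D}}^{F,F}\bigr)=\ind\bigl(\D_{L'_{\D}}^{F,F}\bigr)=0$. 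You instead pair $\D_{L'_{\D}}^{E,F}$ against the double of the annular region $W=\overline{L'\setminus L^{\rm o}}$ with trivial coefficients $\underline{\cal V}$ (presented as $\widetilde W$ plus a cylindrical neck) and cut along the disconnected hypersurface formed by \emph{both} copies of $\partial L$; this is legitimate, since nothing in Theorem~\ref{T:relative} requires $N$ connected (the paper itself cuts along $N_-\sqcup N_+$ in the proof of Theorem~\ref{T:Callias index}), and since $W\subset M\setminus K$ for an essential support $K\subset L^{\rm o}$, so $V(E,F)|_{\widetilde W}\cong\underline{\cal V}$ with the trivial connection, exactly as you say. What your route costs is a bit of extra identification: you must absorb the cylindrical neck (harmless, because the index on a closed manifold is independent of the metric and of stretching a collar), and the vanishing of $\ind(\Pop_2)$ and $\ind(\Pop_4)$ for $W_{\D}$ and $\widetilde W_{\D}$ is obtained from the reflection-symmetry argument \emph{behind}~\eqref{E:index on double1} rather than its literal statement, since $\widetilde W$ is not a submanifold of $M$ (for $W_{\D}$ you may quote it verbatim with the pair $(\underline{\cal V},\underline{\cal V})$). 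What it buys is that the second manifold carries only the trivialized data, so both vanishing statements are tautological, and only one nontrivial bundle ever appears. The delicate point you flag --- choosing the product deformations, orientations, spin structures and trivializations compatibly at both components of the cut so that Assumption~\ref{A:cut-and-paste} holds, with $\Gamma$ matching the $U_1$-side to the $U_2$-side --- is real but no worse than what the paper handles implicitly when it ``deforms all structures to be a product near $\partial L$'', because near $\partial L$ every bundle in sight is canonically $\underline{\cal V}$ with the trivial connection. I see no gap.
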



\begin{proof}
Observe first that it suffices to prove the thesis when one of the submanifolds is contained in the interior of the other.
To see this, consider a compact submanifold with boundary $L^{\prime\prime}\subset M$ whose interior contains both $L$ and $L^\prime$.

Using this observation, we will prove the theorem under the assumption that $L$ is contained in the interior of $L^\prime$.
Consider the Riemannian spin manifolds $(L_\rm D,g_1)$ and $(L^\prime_\rm D,g_2)$, where $g_1$ and $g_2$ are induced by $g$ as explained above.
Consider the operators $\D_{L_{\D}}^{F,F}$ on $L_\rm D$ and $\D_{L^\prime_{\D}}^{E,F}$ on $L^\prime_\rm D$.
Observe we have partitions $L_\rm D=L\cup_{\partial L}L^-$ and $L^\prime_\rm D=L\cup_{\partial L}W$, where $W=\overline{L^\prime\setminus L}\cup_{\partial L^\prime}(L^\prime)^-$.
Deform all structures to be a product in a tubular neighborhood of $\partial L$ in such a way that Assumption~\ref{A:cut-and-paste} is satisfied.
Using the cut-and-paste construction described in Subsection~\ref{SS:cut-and-paste}, we obtain the operator $\D_{L^\prime_{\D}}^{F,F}$ on $L^\prime_{\D}$ and the operator $\D_{L_\rm D}^{E,F}$ on $L_\rm D$.
By~\eqref{E:index on double1}, the indices of $\D_{L_{\D}}^{F,F}$ and $\D_{L^\prime_{\D}}^{F,F}$ vanish.
Using Theorem~\ref{T:relative}, we obtain
\begin{multline*}
    \ind\left(\D_{L^\prime_{\D}}^{E,F}\right)=\ind\left(\D_{L_{\D}}^{F,F}\right)+\ind\left(\D_{L^\prime_{\D}}^{E,F}\right)\\
    =\ind\left(\D_{L^\prime_{\D}}^{F,F}\right)+\ind\left(\D_{L_{\D}}^{E,F}\right)=\ind\left(\D_{L_{\D}}^{E,F}\right),
\end{multline*}
which concludes the proof.
\end{proof}


\noindent Proposition~\ref{P:relind} allows us to define the \emph{relative index} of the pair $(E,F)$ as the class
\begin{equation}\label{E:relind}
    \relind(M;E,F)\defeq\ind\left(\D_{L_{\D}}^{E,F}\right),
\end{equation}
where $L\subset M$ is a submanifold with boundary whose interior contains an essential support of $(E,F)$.


\begin{remark}
In the case~(I) from the beginning of this subsection, $\relind(M;E,F)\in\ZZ$.
In the case~(II), $\relind(M;E,F)\in\KO_n(A)$.
\end{remark}


This class will be used as a localized obstruction for the metric $g$ to have positive scalar curvature under some extra geometric conditions.
To this end, we will need information on the endomorphisms $\cal R^E$ and $\cal R^F$ that appear in the Lichnerowicz formula~\eqref{E:Lichnerowicz}.
We conculde this subsection presenting two examples where we can determine whether the class $\relind(M;E,F)$ vanishes and we have control on the lower bound of the endomorphisms $\cal R^E$ and $\cal R^F$.
These two examples will be used in the geometric applications of Section~\ref{S:LNP} and Section~\ref{S:BW}.


\begin{example}\label{E:GL relative}
Let $(M,g)$ be an even-dimensional Riemannian spin manifold and let $f\colon (M,g)\to(S^n,g_0)$ be a smooth map which is strictly area decreasing and locally constant at infinity.
This last condition means that there exists a compact subset $K\subset M$ such that $f$ is constant on the connected components of $M\setminus K$.
Then, using a construction of Gromov and Lawson~\cite{GL80,GL83} and estimates by Llarull~\cite{Lla98}, there exist Hermitian vector bundles with metric connections $(E,\nabla^E)$ and $(F,\nabla^F)$ satisfying Assumption~\eqref{A:GL} and such that
\begin{enumerate}[label=(\roman*)]
    \item $(F,\nabla^F)$ is the trivial bundle endowed with the trivial connection;
    \item $(E,\nabla^E)$ is pulled back from $S^n$ and satisfies
    	\[
    		{\cal R}^E_x>-\frac{n(n-1)}{4},\qquad x\in\supp(\dd f);
    	\] 
    \item the support of $\dd f$ is an essential support of $(E,F)$;
    \item if $\relind(M;E,F)$ vanishes, then $\deg(f)=0$.
\end{enumerate}
\end{example}


\begin{example}\label{E:higher relative}
Our second example makes use of higher index theory.
Let $Y$ be a closed $n$-dimensional spin manifold with fundamental group $\Gamma$.
Let $\bigl(\mathcal{L}_Y,\nabla^{\mathcal{L}_Y}\bigr)$ be the Mishchenko bundle over $Y$ endowed with the canonical flat metric connection.
Recall that $\mathcal{L}_Y$ has typical fiber $C^\ast\Gamma$.
Suppose Condition~\eqref{C:relative higher index} is satisfied.
Pick distinct points $P_1,\ldots,P_N\in Y$ and consider the open manifold $M\defeq Y\setminus\{P_1,\ldots P_N\}$.
Let $D_1,\ldots,D_N$ be pairwise disjoint $n$-dimensional disks embedded in $Y$ such that $P_j$ is in the interior of $D_j$.
Choose embedded $n$-dimensional disks $D_1^\prime,\ldots, D_N^\prime$ such that $D_j^\prime$ lies in the interior of $D_j$ and $P_j$ is in the interior of $D_j^\prime$.
Let $f\colon M\to Y$ be a smooth map collapsing each end $D_j^\prime\setminus P_j$ to the point $P_j$ and being the identity map outside of $(D_1\setminus P_1)\sqcup\ldots\sqcup (D_N\setminus P_N)$.
Let $\bigl(E,\nabla^E\bigr)$ be the flat bundle over $M$ obtained as the pullback of $\bigl(\mathcal{L}_Y,\nabla^{\mathcal{L}_Y}\bigr)$ through $f$.
Let $\bigl(F,\nabla^F\bigr)$ be the trivial bundle over $M$ with fiber $C^\ast\Gamma$, equipped with the trivial connection.
Then 
\begin{enumerate}[label=(\roman*)]
    \item $(E,\nabla^E)$ and $(F,\nabla^F)$ satisfy Assumption~\eqref{A:GL};
    \item $\relind(M;E,F)\neq 0$.
\end{enumerate}
Property~(ii) follows from Condition~\eqref{C:relative higher index} using Theorem~\ref{T:relative}.
Notice that, since the connections $\nabla^E$ and $\nabla^F$ are flat, $\cal R^E=\cal R^F=0$.
A similar construction works if we pick pairwise disjoint embedded $n$-dimensional disks $D_1,\ldots,D_N\subset Y$ and consider the open manifold $Y\setminus\bigsqcup_{j=1}^N D_j$.
\end{example}


\subsection{Compatible potentials}\label{SS:compatible potentials}
Let $(M,g)$, $\bigl(E,\nabla^E\bigr)$ and $\bigl(F,\nabla^F\bigr)$ denote the same objects as in Subsection~\ref{SS:localized obstructions}.
Suppose Assumption~\ref{A:GL} is satisfied.
Consider the twisted Dirac operators $\Qop\colon\Gamma_\rm c(M; S\tensor E)\to\Gamma_\rm c(M; S\tensor E)$ and $\Rop\colon\Gamma_\rm c(M; S\tensor F)\to\Gamma_\rm c(M; S\tensor F)$.
Recall from Subsection~\ref{SS:localized obstructions} that we have $\ZZ_2$-gradings
\begin{equation}\label{E:twisted grading}
     S\tensor E= \bigl(S^+\tensor E\bigr)\oplus \bigl(S^-\tensor E\bigr)\qquad\text{and}\qquad
     S\tensor F= \bigl(S^+\tensor F\bigr)\oplus \bigl(S^-\tensor F\bigr)
\end{equation}
and that the operators $\Qop$ and $\Rop$ are odd with respect to these gradings.
Fix an admissible rescaling function $\rho$ for $M$ and consider the rescaled operators $\Qop_\rho$ and $\Rop_\rho$ defined by~\eqref{E:rescaled operator}.
Recall from Subsection~\ref{SS:admissible rescaling} that $\Qop_\rho$ and $\Rop_\rho$ are first order formally self-adjoint elliptic differential operators.
Finally, observe that the operators $\Qop_\rho$ and $\Rop_\rho$ are odd with respect to the grading~\eqref{E:twisted grading}, i.e. they are of the form
\[
    \Qop_\rho=\begin{pmatrix}0&\Qop_\rho^-\\\Qop_\rho^+&0\end{pmatrix}\qquad\text{and}\qquad
    \Rop_\rho=\begin{pmatrix}0&\Rop_\rho^-\\\Rop_\rho^+&0\end{pmatrix}
\]
where $\Qop_\rho^+\colon\Gamma_\rm c(M; S^+\tensor E)\to \Gamma_\rm c(M; S^-\tensor E)$, 
$\Rop_\rho^+\colon\Gamma_\rm c(M; S^+\tensor F)\to \Gamma_\rm c(M; S^-\tensor F)$ and $\Qop_\rho^-$, $\Rop_\rho^-$ are formally adjoint respectively to $\Qop_\rho^+$, $\Rop_\rho^+$.

In order to construct a Fredholm operator out of the operators $\Qop_\rho$ and $\Rop_\rho$, we make use of a potential.


\begin{definition}
We say that a smooth function $\phi\colon M\to[0,\infty)$ is a \emph{compatible potential} if $\phi=0$ in a neighborhood of an essential support of $(E,F)$ and $\phi$ is constant and nonzero in a neighborhood of infinity.
\end{definition}


\noindent Fix a compatible potential $\phi$.
By Assumption~\ref{A:GL}, the bundles $E$ and $F$ are isomorphic in a neighborhood of the support of $\phi$.
Therefore, $\phi$ defines bundle maps
\[
	\phi\colon S^\pm\tensor E\longrightarrow S^\pm\tensor F\qquad\text{and}\qquad
	\phi\colon S^\pm\tensor F\longrightarrow S^\pm\tensor E.
\]
Set
\[
    W^+:=\bigl( S^+\tensor E\bigr)\oplus\bigl( S^-\tensor F\bigr)\qquad\text{and}\qquad
    W^-:=\bigl( S^+\tensor F\bigr)\oplus\bigl( S^-\tensor E\bigr).
\]
Define the operator $\Pop_{\rho,\phi}^+\colon\Gamma(W^+)\rightarrow \Gamma(W^-)$ through the formula
\[
	\Pop_{\rho,\phi}^+\defeq
		\begin{pmatrix}\phi &\Rop_\rho^- \\ \Qop_\rho^+ &-\phi \end{pmatrix}.
\]
Denote by $\Pop_{\rho,\phi}^-$ its formal adjoint and consider the graded bundle $W\defeq W^+\oplus W^-$.
The \emph{generalized Gromov-Lawson operator} associated to our data is the operator $\Pop_{\rho,\phi}^{E,F}\colon\Gamma(W)\to\Gamma(W)$ defined as
\[
	\Pop_{\rho,\phi}^{E,F}\defeq\begin{pmatrix}0 & \Pop_{\rho,\phi}^- \\ \Pop_{\rho,\phi}^+ & 0\end{pmatrix}.
\]
By construction, $\Pop_{\rho,\phi}^{E,F}$ is an odd formally self-adjoint elliptic differential operator of order one.
When there is no danger of confusion, we will denote $\Pop_{\rho,\phi}^{E,F}$ simply by $\Pop_{\rho,\phi}$.

\begin{theorem}\label{T:Fredholm}
For every admissible rescaling function $\rho$ and every compatible potential $\phi$, the pair $(M,\Pop_{\rho,\phi})$ is complete and the operator $\Pop_{\rho,\phi}^2$ is uniformly positive at infinity.
\end{theorem}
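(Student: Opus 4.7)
The plan is to split $\Pop_{\rho,\phi}$ into a ``Dirac part'' $\Pop_{\rho,0}$ (the operator obtained by setting $\phi=0$) and a ``potential part'' $\Phi$ that is a fiberwise self-adjoint bundle endomorphism, and then handle the two required properties separately. The key structural observation is that $\Phi$ is zero-order, and that on the neighborhood of infinity where $E$ and $F$ are simultaneously trivialized the twisted Dirac operators $\Qop_\rho$ and $\Rop_\rho$ coincide and $\phi$ is locally constant, so that at infinity $\Pop_{\rho,\phi}^2$ reduces to $\Pop_{\rho,0}^2+\phi^2\cdot I$.

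For completeness, I regroup $W=W^+\oplus W^-$ as $(S\tensor E)\oplus(S\tensor F)$ (with a suitably rearranged $\ZZ_2$-grading); under this identification $\Pop_{\rho,0}$ is the direct sum $\Qop_\rho\oplus\Rop_\rho$. By Proposition~\ref{P:adm->complete}, each summand is complete, witnessed by a coercive function $h$ with $\rho^2|\dd h|\in L^\infty(M)$, and taking the direct-sum commutator yields $\|[\Pop_{\rho,0},h]\|\leq\|\rho^2|\dd h|\|_\infty$. Since $\Phi$ is defined by multiplication by $\pm\phi$ and $h$ is also a scalar function, $[\Phi,h]=0$, so $[\Pop_{\rho,\phi},h]$ remains bounded. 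Alternatively, this is a direct application of Remark~\ref{R:completeness depends on symbol}, since $\Pop_{\rho,\phi}$ and $\Pop_{\rho,0}$ have the same principal symbol.

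For uniform positivity at infinity, let $c>0$ be the constant value $\phi$ takes on a neighborhood of infinity and pick a compact $K\subset M$ containing an essential support of $(E,F)$ and such that $\phi\equiv c$ on $M\setminus K$. Expanding the block product
\[
\Pop_{\rho,\phi}^{-}\Pop_{\rho,\phi}^{+}=\begin{pmatrix}\phi^2+\Qop_\rho^-\Qop_\rho^+ & \phi\Rop_\rho^- - \Qop_\rho^-\phi\\ \Rop_\rho^+\phi-\phi\Qop_\rho^+ & \phi^2+\Rop_\rho^+\Rop_\rho^-\end{pmatrix}
\]
and doing the analogous computation for $\Pop_{\rho,\phi}^+\Pop_{\rho,\phi}^-$, the off-diagonal entries collapse on $M\setminus K$ to commutators of the form $[\phi,\Rop_\rho^\pm]$ (using $\Qop_\rho=\Rop_\rho$ under the trivialization). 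By~\eqref{E:rescaled symbol} and~\eqref{E:uniformly bounded symbol} these are pointwise bounded by $\rho^2|\dd\phi|$, which vanishes since $\phi$ is locally constant outside $K$. Hence on $M\setminus K$ one has $\Pop_{\rho,\phi}^2=\Pop_{\rho,0}^2+c^2\cdot I$, and formal self-adjointness of $\Pop_{\rho,0}$ gives $\langle\Pop_{\rho,\phi}^2 w,w\rangle\geq c^2\langle w,w\rangle$ for all $w\in\Gamma_\rm c(M\setminus K;W|_{M\setminus K})$.

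The only point that requires care, rather than being purely formal, is the off-diagonal block computation: one must verify that the expressions $\phi\Rop_\rho^--\Qop_\rho^-\phi$ and $\Rop_\rho^+\phi-\phi\Qop_\rho^+$ genuinely collapse to commutators on $M\setminus K$, which relies on keeping track of the identification of $E$ and $F$ with $\underline{\cal V}$ on the neighborhood of infinity. Once this identification is in place, everything else reduces to invoking admissibility of $\rho$ and the compatibility condition on $\phi$.
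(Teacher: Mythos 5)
Your argument is correct, and it splits naturally into two halves that compare differently with the paper. The completeness half is exactly the paper's: the proof there also consists of invoking Proposition~\ref{P:adm->complete} together with Remark~\ref{R:completeness depends on symbol}, since after regrouping $W$ as $(S\tensor E)\oplus(S\tensor F)$ the operator $\Pop_{\rho,0}$ is $\Qop_\rho\oplus\Rop_\rho$ and the potential part is an odd fiberwise self-adjoint bundle map. The positivity half takes a genuinely different route. The paper deduces uniform positivity at infinity from the \emph{global} estimate of Lemma~\ref{L:Fredholm},
$\bigl<\Pop_{\rho,\phi}^2w,w\bigr>\geq\bigl<(\phi^2-\abs{\dd\phi})w,w\bigr>$ for all $w\in\Gamma_{\mathrm c}(M;W)$, whose proof must control the off-diagonal cross terms $\rho^2\tilde\cc(\dd\phi)$ in the region where $\dd\phi\neq0$ via the Hilbert-module Cauchy--Schwarz-type estimate of Lemma~\ref{L:Hilbert inequality}. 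You instead localize at infinity: outside a compact set containing an essential support and the region where $\phi$ varies, one has $\dd\phi=0$ and $\Qop_\rho=\Rop_\rho$ under the trivializations, so the off-diagonal blocks vanish identically and $\Pop_{\rho,\phi}^2=\Pop_{\rho,0}^2+c^2$, giving the required inequality with constant $c^2$ directly from formal self-adjointness of $\Pop_{\rho,0}$ on compactly supported sections of $M\setminus K$ (locality of differential operators ensures no contribution from $K$). This is more economical for the statement at hand, since it avoids Lemma~\ref{L:Hilbert inequality} entirely; what the paper's stronger global inequality buys is reuse later, as it is the input for Lemma~\ref{L:LWS}, the vanishing Theorem~\ref{T:GLN}, and the proof of Theorem~\ref{T:complete metrics on torus}, where one genuinely needs the $\phi^2-\abs{\dd\phi}$ bound in the region where $\dd\phi\neq0$. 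One cosmetic point: early on you say $\phi$ is ``locally constant'' at infinity, but the definition of a compatible potential requires it to be constant and nonzero there, which is what your choice of the single constant $c>0$ actually uses.
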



\noindent The proof of this theorem is based on the following two lemmas.


\begin{lemma}\label{L:Hilbert inequality}
Let $U$, $V$ be Hilbert $A$-modules and let $T\colon U\to V$ be an adjointable operator such that $T^\ast T=b^2\id_U$, for some constant $b>0$.
Then for every $\eta\in U$ and $\theta\in V$ we have
\begin{equation}\label{E:Hilbert inequality}
    \left( T\eta\mid \theta \right)_V+ \left( \theta\mid T\eta \right)_V \geq - \left(b\eta\mid \eta\right)_U-\left(b\theta\mid \theta\right)_V,
\end{equation}
where $(\cdot\mid\cdot)_U$ and $(\cdot\mid\cdot)_V$ are the $A$-valued inner products respectively of $U$ and $V$.
\end{lemma}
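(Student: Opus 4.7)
The proof should be a direct consequence of the positivity of the $A$-valued inner product together with the identity $T^\ast T = b^2 \id_U$. The plan is to start from a single ``completing the square'' inequality in the Hilbert module $V$, expand it, and then optimize the scaling parameter.

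Concretely, I would fix a real parameter $s>0$ and begin from the positivity of the inner product
\[
    \bigl(sT\eta + s^{-1}\theta \bigm| sT\eta + s^{-1}\theta\bigr)_V \geq 0
\]
as an element of $A$. Expanding the right-hand side and using that $s$ is a real scalar (so no conjugation appears), this gives
\[
    s^2(T\eta\mid T\eta)_V + (T\eta\mid\theta)_V + (\theta\mid T\eta)_V + s^{-2}(\theta\mid\theta)_V \geq 0.
\]
At this point the assumption $T^\ast T = b^2 \id_U$ enters: by adjointness,
\[
    (T\eta\mid T\eta)_V = (T^\ast T\eta\mid \eta)_U = b^2(\eta\mid\eta)_U,
\]
so rearranging the previous inequality yields
\[
    (T\eta\mid\theta)_V + (\theta\mid T\eta)_V \geq -s^2 b^2(\eta\mid\eta)_U - s^{-2}(\theta\mid\theta)_V.
\]
The last step is to choose $s$ to balance the two coefficients. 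Taking $s^2 = 1/b$ gives $s^2 b^2 = b$ and $s^{-2} = b$, which produces exactly the claimed inequality \eqref{E:Hilbert inequality}.

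There is no substantive obstacle here: the only subtlety worth writing down carefully is that the inequalities are taking place in the $C^\ast$-algebra $A$ (between self-adjoint elements in the natural order), so one should verify that scalar multiplication by the positive real $s$ preserves the positivity of the inner product and that the adjoint identity $(T\eta\mid T\eta)_V = (T^\ast T\eta\mid\eta)_U$ is valid in the Hilbert module sense, both of which are standard facts about adjointable operators on Hilbert $C^\ast$-modules \cite{Lan95}. Beyond that the argument is just the ``$ab \leq \tfrac{1}{2}(\varepsilon a^2 + \varepsilon^{-1} b^2)$'' trick adapted to the $A$-valued setting.
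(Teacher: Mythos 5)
Your proposal is correct and is essentially the paper's own argument: the paper expands $0\leq\bigl(b^{-1/2}T\eta+b^{1/2}\theta\bigm|b^{-1/2}T\eta+b^{1/2}\theta\bigr)_V$ and uses $T^\ast T=b^2\id_U$, which is exactly your completing-the-square inequality with the optimal choice $s^2=1/b$ built in from the start. The only (harmless) slip is calling the expanded expression the ``right-hand side''; it is the left-hand side of the positivity inequality.
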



\begin{proof}
Pick $\eta\in U$ and $\theta\in V$.
We have
\begin{multline*}
    0\leq\left(b^{-1/2}T\eta+b^{1/2}\theta \Bigm| b^{-1/2}T\eta+b^{1/2}\theta\right)_V   \\
    =\left(b^{-1}T^\ast T\eta\mid \eta\right)_U+\left(b\theta\mid \theta\right)_V
    +\left(T\eta\mid \theta\right)_V+\left(\theta\mid T\eta\right)_V\\
    =\left(b\eta\mid\eta\right)_U+\left(b\theta\mid \theta\right)_V
    +\left(T\eta\mid \theta\right)_V+\left(\theta\mid T\eta\right)_V,
\end{multline*}
from which~\eqref{E:Hilbert inequality} follows.
\end{proof}


\begin{lemma}\label{L:Fredholm}
Let $w\in\Gamma_\rm c(M;W^+)$, $u\in\Gamma_\rm c(M;S^+\tensor E)$, and $v\in\Gamma_\rm c(M;S^-\tensor F)$ be such that $w=u\oplus v$. Then
\begin{multline}\label{E:Fred4}
\left<\Pop_{\rho,\phi}^+w, \Pop_{\rho,\phi}^+w\right>\geq
	\left<\Qop_\rho^+u,  \Qop_\rho^+u\right>
	+\left<\phi u,  \phi u\right>
	-\left<\rho^2 \abs{\dd\phi}u,u\right>\\
	+\left<\Rop_\rho^-v,  \Rop_\rho^-v\right>
	+\left<\phi v,  \phi v\right>
	-\left<\rho^2 \abs{\dd\phi}v,v\right>.
\end{multline}
Analogously, 
\begin{multline}\label{E:Fred3}
\left<\Pop_{\rho,\phi}^-\bar w, \Pop_{\rho,\phi}^-\bar w\right>\geq
	\left<\Rop_\rho^+\bar v,  \Rop_\rho^+\bar v\right>
	+\left<\phi\bar v,  \phi\bar v\right>
	-\left<\rho^2 \abs{\dd\phi}\bar v,\bar v\right>
    \\
	+\left<\Qop_\rho^-\bar u,  \Qop_\rho^-\bar u\right>
	+\left<\phi\bar u,  \phi\bar u\right>
	-\left<\rho^2 \abs{\dd\phi}\bar u,\bar u\right>
\end{multline}
for every $\bar w\in\Gamma_\rm c(M;W^-)$, $\bar u\in\Gamma_\rm c(M;S^-\tensor E)$, and $\bar v\in\Gamma_\rm c(M; S^+\tensor F)$ such that $\bar w=\bar v\oplus\bar u$.
\end{lemma}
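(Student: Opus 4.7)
The plan is a direct expansion of $\|\Pop_{\rho,\phi}^\pm w\|^2$ combined with a short commutator identity on the support of $\dd\phi$. For Inequality~\eqref{E:Fred4}, one writes $\Pop_{\rho,\phi}^+ w = (\phi u + \Rop_\rho^- v)\oplus(\Qop_\rho^+ u - \phi v)$ and squares, obtaining the four diagonal terms appearing on the right-hand side of~\eqref{E:Fred4} plus the mixed term
\[
  X \defeq \langle\phi u,\Rop_\rho^- v\rangle + \langle\Rop_\rho^- v,\phi u\rangle - \langle\Qop_\rho^+ u,\phi v\rangle - \langle\phi v,\Qop_\rho^+ u\rangle.
\]
A compatible potential vanishes on a neighborhood of an essential support of $(E,F)$, so all of $\supp\phi$ lies in the region where Assumption~\ref{A:GL} identifies $(E,\nabla^E)$ with $(F,\nabla^F)$; on this set the twisted operators $\Qop$ and $\Rop$ coincide via the identification.

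The next step is to rewrite $X$ using $\Rop_\rho^- = (\Rop_\rho^+)^\ast$ and the commutator formula $[\Zop_\rho,\xi] = \rho^2\cc(\dd\xi)$ supplied by~\eqref{E:rescaled symbol}. One computes
\[
  \langle\phi u,\Rop_\rho^- v\rangle = \langle\Rop_\rho^+(\phi u),v\rangle = \langle\phi\Rop_\rho^+ u,v\rangle + \langle\rho^2\cc(\dd\phi)u,v\rangle,
\]
and on $\supp\phi$ the identification turns $\phi\Rop_\rho^+ u$ into $\phi\Qop_\rho^+ u$. Since the scalar function $\phi$ is self-adjoint, $\langle\Qop_\rho^+ u,\phi v\rangle = \langle\phi\Qop_\rho^+ u,v\rangle$, so the $\phi\Qop_\rho^+$ contributions cancel in $X$ and leave
\[
  X = \langle\rho^2\cc(\dd\phi)u,v\rangle + \langle v,\rho^2\cc(\dd\phi)u\rangle.
\]

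Finally, apply Lemma~\ref{L:Hilbert inequality} with $T = \rho^2\cc(\dd\phi)$. Since Clifford multiplication by a cotangent vector is skew-adjoint and squares to minus its length, $T^\ast T = \rho^4|\dd\phi|^2\Id$ pointwise, so the lemma applies fiberwise with $b(x) = \rho(x)^2|\dd\phi_x|$ (trivially so where $b(x) = 0$, since $T$ vanishes there). Integrating the resulting pointwise inequality gives $X \geq -\langle\rho^2|\dd\phi|u,u\rangle - \langle\rho^2|\dd\phi|v,v\rangle$, which combined with the expansion yields~\eqref{E:Fred4}. Inequality~\eqref{E:Fred3} is obtained by applying the same three steps to the formal adjoint $\Pop_{\rho,\phi}^-$; the only cosmetic change is that one now commutes $\phi$ past $\Qop_\rho^+$ rather than $\Rop_\rho^+$, but on $\supp(\dd\phi)$ this again produces $\rho^2\cc(\dd\phi)$ and yields the same lower bound. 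The main subtlety is the bookkeeping of bundle identifications on $\supp\phi$, which is precisely what Assumption~\ref{A:GL} and the definition of a compatible potential are designed to make rigorous.
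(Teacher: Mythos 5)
Your proposal is correct and follows essentially the same route as the paper: expand $\Pop_{\rho,\phi}^+w=(\phi u+\Rop_\rho^-v)\oplus(\Qop_\rho^+u-\phi v)$, identify the cross term with $\bigl<(\Rop_\rho^+\phi-\phi\Qop_\rho^+)u,v\bigr>$ plus its conjugate, use Assumption~\ref{A:GL} on $\supp\phi$ to reduce this to $\rho^2\cc(\dd\phi)$, and bound it pointwise via Lemma~\ref{L:Hilbert inequality} with $b=\rho^2\abs{\dd\phi}$. Your remark that $T^\ast T=\rho^4\abs{\dd\phi}^2\Id$ (positive) is in fact the correct form of the identity that the paper states with a sign typo, and your treatment of the degenerate set $\dd\phi_x=0$ and of~\eqref{E:Fred3} matches the paper's.
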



\begin{proof}
We have
\begin{equation*}
	\Pop_{\rho,\phi}^+w=
	\left(\phi u+\Rop_\rho^-v\right) \oplus \left(\Qop_\rho^+u-\phi v\right),
\end{equation*}
from which
\begin{multline}\label{E:Fred0}
\left<\Pop_{\rho,\phi}^+w, \Pop_{\rho,\phi}^+w\right> 
	=\left<\Qop_\rho^+u,  \Qop_\rho^+u\right>
	+\left<\phi u,  \phi u\right>
	+\left<\bigl(\Rop_\rho^+\phi-\phi\Qop_\rho^+\bigr)u,  v\right>\\
	+\left<\Rop_\rho^-v,  \Rop_\rho^-v\right>
	+\left<\phi v,  \phi v\right>
	+\left<v, \bigl(\Rop_\rho^+\phi-\phi\Qop_\rho^+\bigr)u\right>.
\end{multline}
Let us now analyze the operator $\bigl(\Rop_\rho^+\phi-\phi\Qop_\rho^+\bigr)\colon\Gamma(M;S^+\tensor E)\to\Gamma(M;S^-\tensor F)$.
By Assumption~\ref{A:GL}, we have isomorphisms
\begin{equation}\label{E:iso at infty}
    S^+\tensor E\cong S^+\tensor F\qquad S^-\tensor E\cong S^-\tensor F\qquad\textnormal{ on } M\setminus K.
\end{equation}
Since $\phi$ vanishes in a neighborhood of $K$, using the isomorphisms~\eqref{E:iso at infty} Clifford multiplication by $\dd\phi$ defines a bundle map $\tilde\cc(\dd\phi)\colon S^+\tensor E\to S^-\tensor F$.
When $\dd\phi_x=0$, $\tilde\cc(\dd\phi_x)$ is the zero map.
When $\dd\phi_x\neq 0$, $\tilde\cc(\dd\phi_x)^\ast\tilde\cc(\dd\phi_x)=-\abs{\dd\phi_x}^2$.
Observe that, under the isomorphisms~\eqref{E:iso at infty}, the operators $\Qop^+_\rho$ and $\Qop^-_\rho$ correspond respectively to the operators $\Rop^+_\rho$ and $\Rop^-_\rho$.
Therefore,
\[
    \Rop_\rho^+\phi-\phi\Qop_\rho^+=\rho^2\tilde\cc(\dd \phi).
\]
Moreover, we have 
\begin{multline}\label{E:Fredholm1}
    \bigl<\rho^2(x)\tilde\cc(\dd\phi_x)u(x)\mid v(x)\bigr>_x+\bigl<v(x)\mid\rho^2(x)\tilde\cc(\dd\phi_x)u(x)\bigr>_x\\
    \geq
    -\bigl<\rho^2(x)\abs{\dd\phi_x}u(x)\mid u(x)\bigr>_x-\bigl<\rho^2(x)\abs{\dd\phi_x}v(x)\mid v(x)\bigr>_x
\end{multline}
for all $x\in M$.
When $\dd\phi_x=0$, this inequality is trivial.
When $\dd\phi_x\neq 0$, it follows from Lemma~\ref{L:Hilbert inequality} by setting $U=S^+_x\tensor E_x$, $V=S^-_x\tensor F_x$, $\eta=u(x)$, $\theta=v(x)$, $T=\rho^2(x)\cc(\dd\phi_x)$ and $b=\rho^2(x)\abs{\dd\phi_x}$.
Using~\eqref{E:Fredholm1}, we obtain
\begin{multline*}
    \left<\bigl(\Rop_\rho^+\phi-\phi\Qop_\rho^+\bigr)u,  v\right>
    +\left<v, \bigl(\Rop_\rho^+\phi-\phi\Qop_\rho^+\bigr)u\right>\\
    =\int_{M}\Bigl\{\bigl<\rho^2(x)\tilde\cc(\dd\phi_x)u(x)\mid v(x)\bigr>_x
    +\bigl<v(x)\mid\rho^2(x)\tilde\cc(\dd\phi_x)u(x)\bigr>_x\Bigr\}\,\dd\mu_g(x)\\
    \geq-\int_{M}\Bigl\{\bigl<\rho^2(x)\abs{\dd\phi_x}u(x)\mid u(x)\bigr>_x
    +\bigl<\rho^2(x)\abs{\dd\phi_x}v(x)\mid v(x)\bigr>_x\Bigr\}\,\dd\mu_g(x)\\
    =-\left<\rho^2\abs{\dd\phi}u,u\right>-\left<\rho^2 \abs{\dd\phi}v,v\right>.
\end{multline*}
Finally, Inequality~\eqref{E:Fred4} follows from this last inequality and~\eqref{E:Fred0}.
Inequality~\eqref{E:Fred3} is proved in a similar way.
\end{proof}


\begin{proof}[Proof of Theorem~\ref{T:Fredholm}]
The completeness of the pair $(M,\Pop_{\rho,\phi})$ follows from Proposition~\ref{P:adm->complete} and Remark~\ref{R:completeness depends on symbol}.
Moreover, since $\rho\leq1$, from Lemma~\ref{L:Fredholm} we deduce
\begin{equation*}\label{E:Pop^2 Fredholm estimate}
        \left<\Pop_{\rho,\phi}^2w,w\right>\geq \left<\left(\phi^2-\abs{\dd\phi}\right)w,w\right>,\qquad w\in\Gamma_c(M;W).
\end{equation*}
Since $\phi$ is a compatible potential, the previous inequality implies that $\Pop_{\rho,\phi}^2$ is uniformly positive at infinity.
\end{proof}


From Theorem~\ref{T:Fredholm} and the results of Subsection~\ref{SS:relative completeness}, the class $\ind\left(\Pop_{\rho,\phi}\right)$ is well defined, for every admissible rescaling function $\rho$ and every compatible potential $\phi$.
In the case~(I) from Subsection~\ref{SS:localized obstructions}, $\ind\left(\Pop_{\rho,\phi}\right)\in\ZZ$.
In the case~(II), $\ind\left(\Pop_{\rho,\phi}\right)\in\KO_n(A)$.


\subsection{The index theorem}\label{SS:index theorem}
Let $(M,g)$, $\bigl(E,\nabla^E\bigr)$ and $\bigl(F,\nabla^F\bigr)$ denote the same objects as in Subsection~\ref{SS:localized obstructions}.
Suppose Assumption~\ref{A:GL} is satisfied.


\begin{theorem}\label{T:index}
For every admissible rescaling function $\rho$ and every compatible potential $\phi$, the classes $\ind \left(\Pop_{\rho,\phi}\right)$ and $\relind(M;E,F)$ coincide.
\end{theorem}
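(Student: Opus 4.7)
The key tool is the cut-and-paste additivity formula of Theorem~\ref{T:relative}, which I would apply to compare $\Pop_{\rho,\phi}^{E,F}$ on $M$ with a suitable Dirac-type operator on the closed double $L_\D$.

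First I would show that $\ind(\Pop_{\rho,\phi}^{E,F})$ depends only on the pair $(E,F)$ and not on the auxiliary data $(\rho,\phi)$. Both the class of admissible rescalings and the class of compatible potentials are closed under convex combinations, so any two choices are connected by a straight-line homotopy along which the Fredholm estimates from the proof of Theorem~\ref{T:Fredholm} are uniform; this yields a continuous family of Fredholm operators with constant index. Using this flexibility, I would fix a compact submanifold with boundary $L\subset M$ whose interior contains an essential support of $(E,F)$, and adjust $(\rho,\phi)$ so that on a tubular neighborhood of $\partial L$ and on all of $M\setminus L^\rm o$ the bundles satisfy $E\cong F\cong \underline{\calV}$ with trivial connection, $\rho\equiv 1$, the potential $\phi$ equals a fixed positive constant $\lambda$, and the remaining Riemannian and bundle data are in product form near $\partial L$.

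Next I would build an auxiliary Fredholm pair $(L_\D,\widetilde\Pop)$ on the closed double. On the $\ZZ_2$-graded bundle $\widetilde W=(S_{L_\D}\otimes V(E,F))\oplus(S_{L_\D}\otimes V(F,E))$, define $\widetilde\Pop$ by the recipe of Subsection~\ref{SS:compatible potentials}, using $\widetilde\rho\equiv 1$ and the potential $\widetilde\phi$ obtained by extending $\phi|_L$ across $\partial L$ via reflection (so $\widetilde\phi\equiv\lambda$ on the mirror collar in $L^-$). By the product-form assumption, $\widetilde\Pop$ and $\Pop_{\rho,\phi}^{E,F}$ have identical local form in a collar of $\partial L$, so Assumption~\ref{A:cut-and-paste} is satisfied. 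Since $L_\D$ is closed, the twisted Dirac operator is already Fredholm without a potential, so $\widetilde\phi$ can be homotoped to zero through Fredholm operators; the resulting operator is $\D_{L_\D}^{E,F}\oplus\D_{L_\D}^{F,E}$, whose index is $\relind(M;E,F)+\relind(M;F,E)=0$ by identity~\eqref{E:index on double3}.

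Applying Theorem~\ref{T:relative} with cutting hypersurface $\partial L$ in both manifolds produces glued pairs $(M_3,\Pop_3)$ on $L_\D$ and $(M_4,\Pop_4)$ on $M$. Tracking the bundle swap across $\partial L$, I would identify $\Pop_3$ (after a further homotopy of its induced potential) with $\D_{L_\D}^{E,F}\oplus\D_{L_\D}^{F,E}$, hence $\ind(\Pop_3)=0$, while $\Pop_4$ has the trivial bundle and constant potential $\lambda$ outside $L$, so $\Pop_4^2\geq\lambda^2$ there; a second, cylindrical, cut-and-paste against the invertible model on $\partial L\times\RR$ then reduces $\Pop_4$ to $\D_{L_\D}^{E,F}$, giving $\ind(\Pop_4)=\relind(M;E,F)$. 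The additivity $\ind(\Pop_1)+\ind(\Pop_2)=\ind(\Pop_3)+\ind(\Pop_4)$ then yields the claimed equality. The main obstacle lies in this last step: after the swap one must carefully track the $\ZZ_2$-gradings and the interchange of $E$-type and $F$-type factors to verify Assumption~\ref{A:cut-and-paste} at the interface and to recognize $\Pop_3$ and $\Pop_4$ as the claimed operators; in particular the reduction of $\Pop_4$ to $\D_{L_\D}^{E,F}$ requires the auxiliary cylindrical comparison to be set up compatibly with the trivializations at infinity.
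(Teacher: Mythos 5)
Your overall plan (normalize $(\rho,\phi)$ and compare $\Pop_{\rho,\phi}^{E,F}$ with an operator on the closed double $L_\D$ via the cut-and-paste formula of Theorem~\ref{T:relative}) is the same as the paper's, but two of your steps fail as written. First, you cannot arrange $\rho\equiv 1$ on the complement of the interior of $L$: admissibility requires a coercive $h$ with $\rho^2\abs{\dd h}\in L^\infty$, and on the incomplete manifolds the theorem is designed for (e.g.\ $X^\rm o=X\setminus\partial X$) the constant function $1$ is \emph{not} admissible, so the pair $\bigl(M,\Pop_{1,\phi}\bigr)$ need not be complete and your homotopy leaves the class of operators with well-defined index; also the convexity claim for admissible rescalings is unjustified (two rescalings may need different coercive functions), and the paper's Lemma~\ref{L:stability}(a) only permits changing $\rho$ on a compact set, which is why the paper normalizes $\rho=1$ and $\phi=0$ only on a neighborhood of the compact set $L$, keeping $\rho$ arbitrary at infinity. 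Second, and more seriously, your comparison operator on $L_\D$ is built from $V(E,F)$ and $V(F,E)$, which restrict over $L$ to exactly the data $(E,F)$ that $\Pop_{\rho,\phi}^{E,F}$ already carries over $L\subset M$; consequently the cut-and-paste along $\partial L$ exchanges nothing ($\Pop_3\cong\widetilde\Pop$ and $\Pop_4\cong\Pop_{\rho,\phi}^{E,F}$), so Theorem~\ref{T:relative} returns a tautology. Your index evaluations on the double are also off by a grading: the recipe of Subsection~\ref{SS:compatible potentials} with zero potential yields the direct sum of $\D_{L_\D}^{E,F}$ and $\D_{L_\D}^{F,E}$ with the grading of the second summand reversed, whose index is $\ind\bigl(\D_{L_\D}^{E,F}\bigr)-\ind\bigl(\D_{L_\D}^{F,E}\bigr)=2\relind(M;E,F)$ by~\eqref{E:index on double3}, not $0$. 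The net effect is that the only substantive claim left, $\ind\left(\Pop_4\right)=\relind(M;E,F)$, is precisely the statement to be proved and is deferred to an unexplained ``cylindrical cut-and-paste''.

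The repair is the paper's choice of auxiliary operator: on $L_\D$ take the odd operator built from $V(\underline{\cal V},E)$ and $V(\underline{\cal V},F)$, i.e.\ Lemma~\ref{L:GL additivity} applied with $(G,H)=\bigl(\underline{\cal V},\underline{\cal V}\bigr)$, so that the coefficient bundles are trivial over $L$ and equal to $E$, $F$ over the reflected copy $L^-$. Then gluing along $\partial L$ genuinely swaps data: one glued operator lives on $L_\D$ with coefficients $V(E,E)$ and $V(F,F)$ and has index $0$ by~\eqref{E:index on double1}, while the other is the trivial-coefficient operator $\Pop^{\underline{\cal V},\underline{\cal V}}_{\rho,\phi}$ on $M$, whose index vanishes by Lemma~\ref{L:stability}(c). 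Combining Theorem~\ref{T:relative} with the identities of Lemma~\ref{L:index on double} then gives $\ind\bigl(\Pop^{E,F}_{\rho,\phi}\bigr)=\ind\bigl(\D_{L_\D}^{E,F}\bigr)$ directly, with no cylindrical comparison and no normalization of $\rho$ at infinity.
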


In order to prove this theorem, we first establish some stability properties of the index of $\Pop_{\rho,\phi}$.


\begin{lemma}\label{L:stability}
Let $\rho$ be an admissible rescaling function and let $\phi$ be a compatible potential. 
Then,
\begin{enumerate}[label=\emph{(\alph*)}]
    \item if $\rho^\prime$ is a second admissible rescaling function coinciding with $\rho$ in a neighborhood of infinity, then $\ind\left(\Pop_{\rho^\prime,\phi}\right)=\ind\left(\Pop_{\rho,\phi}\right)$;
    \item if $\phi^\prime$ is a second compatible potential, then $\ind\left(\Pop_{\rho,\phi^\prime}\right)=\ind\left(\Pop_{\rho,\phi}\right)$;
    \item if $E=F$, then $\ind\left(\Pop_{\rho,\phi}\right)=0$.
\end{enumerate}
\end{lemma}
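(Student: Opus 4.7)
My plan is to prove \emph{(a)} and \emph{(b)} by straightforward linear homotopies and to prove \emph{(c)} by exploiting the odd self-adjoint symmetry that appears when $E=F$.

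For \emph{(a)}, I will take $\rho_s\defeq(1-s)\rho+s\rho'$ for $s\in[0,1]$. Since $\rho_s$ coincides with $\rho$ outside a fixed compact set, Remark~\ref{R:coinciding at infty} ensures each $\rho_s$ is admissible, and so by Theorem~\ref{T:Fredholm} the operator $\Pop_s\defeq\Pop_{\rho_s,\phi}$ is Fredholm for every $s\in[0,1]$. The core step is to verify that the bounded-transform map $s\mapsto F_s\defeq\Pop_s(\Pop_s^2+1)^{-1/2}\in\cal L_A(L^2(M;W))$ is norm continuous; once this is done, homotopy invariance of the Fredholm index yields $\ind(\Pop_{\rho,\phi})=\ind(\Pop_{\rho',\phi})$. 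Continuity is obtained from the integral representation~\eqref{E:integral representation} combined with the resolvent identity
\[
    (\Pop_s^2+1+t^2)^{-1}-(\Pop_{s'}^2+1+t^2)^{-1}=(\Pop_s^2+1+t^2)^{-1}(\Pop_{s'}^2-\Pop_s^2)(\Pop_{s'}^2+1+t^2)^{-1}
\]
and the uniform bounds~\eqref{E:R(t)}--\eqref{E:D^2R(t)1} from Lemma~\ref{P:invertible at infinity}: the second-order differential operator $\Pop_{s'}^2-\Pop_s^2$ is supported in a fixed compact set and its coefficients depend continuously on $s$, so each resolvent difference is compact and the integrand is dominated by an integrable function of $t$.

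Part~\emph{(b)} runs along the same lines with $\phi_s\defeq(1-s)\phi+s\phi'$. Because $\phi$ and $\phi'$ are both nonnegative, vanish in neighborhoods of essential supports of $(E,F)$, and equal positive constants in neighborhoods of infinity, the convex combination $\phi_s$ is a compatible potential for every $s\in[0,1]$. Moreover, the perturbation $\Pop_{\rho,\phi_s}-\Pop_{\rho,\phi_{s'}}$ is a compactly supported bundle endomorphism, which makes the norm continuity of $s\mapsto\Pop_{\rho,\phi_s}(\Pop_{\rho,\phi_s}^2+1)^{-1/2}$ a simpler variant of the estimate used in~(a).

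For~\emph{(c)}, when $E=F$ the bundles $W^+=(S^+\tensor E)\oplus(S^-\tensor E)$ and $W^-=(S^+\tensor E)\oplus(S^-\tensor E)$ are canonically identified, both coinciding with $S\tensor E$. Under this identification, the matrix $\Pop_{\rho,\phi}^+=\bigl(\begin{smallmatrix}\phi&\Rop_\rho^-\\\Rop_\rho^+&-\phi\end{smallmatrix}\bigr)$ is its own formal adjoint, so $\Pop_{\rho,\phi}^-=\Pop_{\rho,\phi}^+$; equivalently, the odd unitary swap $U\colon W\to W$, $(w^+,w^-)\mapsto(w^-,w^+)$, commutes with $\Pop_{\rho,\phi}$ and hence with the bounded transform. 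It follows that $F^+=(F^+)^\ast$ on $S\tensor E$. After a small self-adjoint compact perturbation yielding a closed-range representative of the same index class, the kernel and cokernel of $F^+$ are identified as the same Hilbert $A$-submodule of $S\tensor E$, so $[\ker F^+]-[\ker(F^+)^\ast]$ vanishes in $K_0(A)$, respectively in $\KO_n(A)$ in the Real case. The main obstacle in the lemma is the norm-continuity assertion in parts~(a)--(b): once the compactly supported principal-symbol (respectively zeroth-order) deformation is shown to induce compact resolvent differences, the rest of the argument is formal, and the symmetry argument for~(c) is essentially algebraic.
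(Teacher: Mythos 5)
Parts (a) and (b) of your proposal are exactly the paper's argument: the linear homotopies $\rho_s$ and $\phi_s$ together with norm continuity of the bounded transform, which the paper asserts and you justify via the integral representation~\eqref{E:integral representation} and the bounds of Lemma~\ref{P:invertible at infinity}. Two harmless remarks: compactness of the resolvent differences is not needed (only norm smallness in $s$), and in (b) the difference $\phi'-\phi$ is in general \emph{not} compactly supported, since the two potentials may take different constant values near infinity; boundedness of the zeroth-order perturbation is all your estimate actually uses, so the argument survives.

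Part (c) is where you depart from the paper, and there is a genuine gap. Your symmetry argument — identify $W^+\cong W^-\cong S\tensor E$, observe that $\Pop^+_{\rho,\phi}$ becomes formally self-adjoint, and conclude that kernel and cokernel cancel — proves the vanishing only in case (I), where the index is the numerical Fredholm index in $\ZZ$. In case (II) the index is the $\cl_{n,0}$-linear Real index in $\KO_n(A)$, which is \emph{not} of the form $[\Ker F^+]-[\Ker (F^+)^\ast]$: for general $n$ it is a Clifford-module-valued (e.g.\ mod $2$) invariant, and formal self-adjointness does not force it to vanish. A basic illustration is the $\cl_{1,0}$-linear Dirac operator on the circle with the non-bounding spin structure, which is formally self-adjoint yet has nonzero index in $\KO_1(\RR)=\ZZ/2$; the correct vanishing criterion in the Clifford-linear setting is the existence of an extra odd symmetry \emph{anticommuting} with the operator and the Clifford generators (extending the $\cl_{n,0}$-action to $\cl_{n+1,0}$), whereas your swap $U$ \emph{commutes} with them, which yields nothing. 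Since the lemma is used through Theorem~\ref{T:index} precisely in case (II) (the $C^\ast\Gamma$-linear applications), this cannot be waved away with ``respectively in $\KO_n(A)$''. The paper's route avoids the issue entirely and works uniformly in both cases: when $E=F$ the constant function $1$ gives a well-defined operator $\Pop_{\rho,1}$, the computation of Lemma~\ref{L:Fredholm} shows $\bigl<\Pop_{\rho,1}^2 w,w\bigr>\geq\bigl<w,w\bigr>$ (the gradient term is absent), so $\Pop_{\rho,1}$ is invertible and $\ind\left(\Pop_{\rho,1}\right)=0$ in any of the relevant $K$-groups; the linear homotopy $\phi_t=t+(1-t)\phi$, handled exactly as in your part (b), then gives $\ind\left(\Pop_{\rho,\phi}\right)=0$. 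You should replace your argument for (c) by this positivity-plus-homotopy argument.
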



\begin{proof}
By Remark~\ref{R:coinciding at infty}, the function $\rho_t\defeq t\rho^\prime+(1-t)\rho$ is an admissible rescaling function, for $t\in[0,1]$.
Part~(a) follows by observing that $\Bigl\{\Pop_{\rho_t,\phi}\left(\Pop_{\rho_t,\phi}^2+1\right)^{-1/2}\Bigr\}$, with $0\leq t\leq 1$, is a continuous path of Fredholm operators.

Observe now that the function $\phi_t\defeq t\phi^\prime+(1-t)\phi$ is a compatible potential, for $t\in[0,1]$.
Part~(b) follows by observing that $\Bigl\{\Pop_{\rho,\phi_t}\left(\Pop_{\rho,\phi_t}^2+1\right)^{-1/2}\Bigr\}$, with $0\leq t\leq 1$, is a continuous path of Fredholm operators.

Finally, suppose that $E=F$.
In this case, the operator $\Pop_{\rho,1}$ is well defined and Fredholm.
Moreover, by the computations of Lemma~\ref{L:Fredholm}, the operator $\Pop_{\rho,1}^2$ is uniformly positive and $\ind\left(\Pop_{\rho,1}\right)=0$.
By considering the functions $\phi_t=t+(1-t)\phi$ and arguing as in Part~(b), we deduce Part~(c).
\end{proof}


We will now establish an additivity formula for generalized Gromov-Lawson operators, from which we will deduce Theorem~\ref{T:index}.
Let us consider the following situation.
Let $(E,F)$ and $(G,H)$ be two pairs over $(M,g)$ satisfying Assumption~\ref{A:GL}.
Let $K\subset M$ be an essential support of both $(E,F)$ and $(G,H)$ and let $L\subset M$ be a compact submanifold with boundary whose interior contains $K$.
Let $\phi$ be a compatible potential vanishing in a neighborhood of $L$ and let $\rho$ be an admissible rescaling function such that $\rho=1$ in a neighborhood of $L$.
Denote respectively by $\Pop_{\rho,\phi}^{E,F}$ and $\Pop_{\rho,\phi}^{G,H}$ the generalized Gromov-Lawson operators associated to these data.


\begin{lemma}\label{L:GL additivity}
In the above situation, we have
\begin{equation}\label{E:GL additivity2}
    \ind\left(\Pop_{\rho,\phi}^{E,F}\right)+\ind\left(\D_{L_D}^{G,H}\right)=
    \ind\left(\Pop_{\rho,\phi}^{G,H}\right)+\ind\left(\D_{L_D}^{E,F}\right).
\end{equation}
\end{lemma}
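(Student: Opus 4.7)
The plan is to reduce the additivity formula to a direct application of the $K$-theoretic cut-and-paste formula (Theorem~\ref{T:relative}), combined with the vanishing and doubling identities from Lemma~\ref{L:index on double}. The essential observation that makes the strategy work is that the generalized Gromov-Lawson operator simplifies drastically where the potential vanishes and the rescaling is trivial: on any region where $\rho\equiv 1$ and $\phi\equiv 0$, a direct inspection of the block matrix of $\Pop_{\rho,\phi}^+$ shows that $\Pop_{\rho,\phi}^{E,F}$ coincides with the spin Dirac operator $\D_M$ twisted by the $\ZZ_2$-graded bundle $E\oplus F$, with $E$ placed in even degree and $F$ in odd degree. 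By the hypothesis on $\rho$ and $\phi$, this holds in a neighborhood of all of $L$, in particular on a tubular neighborhood of $\partial L$, where furthermore $E$ and $F$ are canonically isomorphic to the trivial bundle with fibre $\mathcal V$, so that $\Pop_{\rho,\phi}^{E,F}$ reduces there to a Dirac operator on a trivial graded bundle.

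Given this, I would apply Theorem~\ref{T:relative} to the pairs
\[
    (M_1,\Pop_1) \defeq \bigl(M,\Pop_{\rho,\phi}^{E,F}\bigr), \qquad (M_2,\Pop_2) \defeq \bigl(L_\D,\D_{L_\D}^{G,H}\bigr),
\]
with separating hypersurface $N_1=N_2=\partial L$ and partitions $M_1=L\cup_{\partial L}\overline{M\setminus L}$ and $M_2=L\cup_{\partial L}L^-$. Under the swap $(U_1,V_1)\leftrightarrow(U_2,V_2)$, we would obtain $M_3=L\cup_{\partial L}L^-=L_\D$ carrying the operator built from $\Pop_{\rho,\phi}^{E,F}|_L$ (which is intrinsically the twisted Dirac operator on $L$ corresponding to the pair $(E,F)$) on the $L$-side and the restriction of $\D_{L_\D}^{G,H}$ on the $L^-$-side, and $M_4=L\cup_{\partial L}\overline{M\setminus L}=M$ with the analogous mixed operator. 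After identifying the resulting operators with $\D_{L_\D}^{E,F}$ and $\Pop_{\rho,\phi}^{G,H}$ respectively—using that the bundles $E,F,G,H$ all coincide with $\underline{\cal V}$ outside $K\subset L^\circ$ so that the glued operators are well defined on the nose—the cut-and-paste formula yields exactly~\eqref{E:GL additivity2}.

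The main obstacle is verifying Assumption~\ref{A:cut-and-paste}, namely exhibiting an isometry $\widetilde\Gamma$ of the bundles $W^{E,F}$ and $S_{L_\D}\otimes V(G,H)$ over tubular neighborhoods of $\partial L$ that intertwines the two operators. The mismatch is real: $W^{E,F}$ carries a ``doubled'' graded structure coming from the pair $(E,F)$, whereas $S_{L_\D}\otimes V(G,H)$ is a single twisted spinor bundle with the ordinary spinor grading. To bridge this gap the plan is to enlarge $\Pop_2$ by an auxiliary trivially-graded summand whose index vanishes by identities~\eqref{E:index on double1} and~\eqref{E:index on double3} (for example, $\D_{L_\D}^{H,G}$ paired with an opposite-grading copy), so that after stabilization the bundles and gradings agree on the cut. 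The $\ZZ_2$-grading bookkeeping under this stabilization is delicate but local in nature, and once carried out one obtains the isometry and hence the bundle maps $a,b,c,d$ that feed into Theorem~\ref{T:relative}. Given the compatibility, Lemma~\ref{L:stability}(c) and Lemma~\ref{L:index on double} take care of any auxiliary contributions, leaving exactly the four indices appearing in~\eqref{E:GL additivity2}.
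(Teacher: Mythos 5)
Your overall strategy is the right one and is in fact the paper's: a direct application of Theorem~\ref{T:relative} to $\bigl(M,\Pop^{E,F}_{\rho,\phi}\bigr)$ and $\bigl(L_\rm D,\D^{G,H}_{L_\rm D}\bigr)$ is impossible because of the doubled grading of $W$, and the cure is to replace the operator on $L_\rm D$ by a ``doubled'' operator before cutting and pasting, then clean up with Lemma~\ref{L:index on double}. The genuine gap is that this decisive stabilization is never pinned down, and the candidate you name does not do what you claim. If by ``$\D_{L_\rm D}^{H,G}$ paired with an opposite-grading copy'' you mean the index-zero summand $\D^{H,G}_{L_\rm D}\oplus\bigl(\D^{H,G}_{L_\rm D}\bigr)^{\rm op}$, then near $\partial L$ the positive part of the stabilized bundle is $\bigl(S^+\tensor\underline{\cal V}\bigr)\oplus\bigl(S^+\tensor\underline{\cal V}\bigr)\oplus\bigl(S^-\tensor\underline{\cal V}\bigr)$, which still does not match $W^+\cong\bigl(S^+\tensor\underline{\cal V}\bigr)\oplus\bigl(S^-\tensor\underline{\cal V}\bigr)$, so Assumption~\ref{A:cut-and-paste} remains violated. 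If instead you mean $\D^{G,H}_{L_\rm D}\oplus\bigl(\D^{H,G}_{L_\rm D}\bigr)^{\rm op}$, the bundles do match across the cut, but the added summand has index $-\ind\bigl(\D^{H,G}_{L_\rm D}\bigr)=\ind\bigl(\D^{G,H}_{L_\rm D}\bigr)$, which is not zero in general, and the glued operators are not ``exactly the four indices'' of~\eqref{E:GL additivity2}: since $\D^{G,H}_{L_\rm D}$ restricted to $L^-$ is twisted by $H$ (not $F$, as your second paragraph asserts), the glued operator on $L_\rm D$ has slots $V(E,H)$ and $V(F,G)$, hence index $\ind\bigl(\D^{E,H}_{L_\rm D}\bigr)-\ind\bigl(\D^{F,G}_{L_\rm D}\bigr)$, and one still has to convert this, together with the factor $2\ind\bigl(\D^{G,H}_{L_\rm D}\bigr)$ on the other side, into the lemma's terms via~\eqref{E:index on double2} and~\eqref{E:index on double3}. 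That conversion does close up, but it (together with the verification of Assumption~\ref{A:cut-and-paste} for a concretely chosen operator) is the whole content of the proof, and your write-up replaces it by a false vanishing claim and an unproved identification.

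For comparison, the paper resolves the mismatch by choosing the auxiliary operator on $L_\rm D$ to be the doubled operator $\Pop^{G,E}_{H,F}$ with slots $V(G,E)$ and $V(H,F)$, i.e.\ twists $(G,H)$ over $L$ and $(E,F)$ over $L^-$: near $\partial L$ it agrees with $\Pop^{E,F}_{\rho,\phi}$ (where $\rho=1$, $\phi=0$ and all four bundles are $\underline{\cal V}$), its index equals $\ind\bigl(\D^{G,H}_{L_\rm D}\bigr)-\ind\bigl(\D^{E,F}_{L_\rm D}\bigr)$ by Lemma~\ref{L:index on double}, and the cut-and-paste outputs are $\Pop^{G,H}_{\rho,\phi}$ on $M$ and the symmetric operator $\Pop^{E,E}_{F,F}$ on $L_\rm D$, whose index vanishes, so Theorem~\ref{T:relative} yields~\eqref{E:GL additivity2} at once. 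If you fix a definite stabilization---either this one, or $\D^{G,H}_{L_\rm D}\oplus\bigl(\D^{H,G}_{L_\rm D}\bigr)^{\rm op}$ with the extra index identities carried out explicitly---your argument becomes a proof; as written, the key step is missing.
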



\begin{proof}
Let $S_{L_\rm D}$ and $\D_{L_\rm D}$ be the $\ZZ_2$-graded spinor bundle and odd Dirac operator associated to the spin manifold $L_\rm D$.
Consider the $\ZZ_2$-graded bunlde $V=V^+\oplus V^-$ over $L_\rm D$, where
\[
    V^+:=\left( S_{L_\rm D}^+\tensor V(G,E)\right)\oplus\left( S_{L_\rm D}^-\tensor V(H,F)\right)
\]
\[
    V^-:=\left( S_{L_\rm D}^+\tensor V(H,F)\right)\oplus\left( S_{L_\rm D}^-\tensor V(G,E)\right)
\]
and where $V(G,E)$ and $V(H,F)$ are the bundles defined in~Subsection~\ref{SS:localized obstructions}.
Define the operator $\Top^+\colon\Gamma(L_\rm D;V^+)\rightarrow \Gamma(L_\rm D;V^-)$ through the formula
\[
	\Top^+\defeq
		\begin{pmatrix}0 &\Bigl(\D_{L_\rm D}^{H,F}\Bigr)^- \\ \Bigl(\D_{L_\rm D}^{G,E}\Bigr)^+ & 0 \end{pmatrix}.
\]
Let $\Pop_{H,F}^{G,E}\colon\Gamma(V)\to\Gamma(V)$ be the operator defined as
\[
	\Pop_{H,F}^{G,E}\defeq\begin{pmatrix}0 & \Top^- \\ \Top^+ & 0\end{pmatrix},
\]
where $\Top^-$ is the formal adjoint of $\Top^+$.
Observe that $\Pop_{H,F}^{G,E}$ is an odd formally self-adjoint elliptic differential operator of order one.
By construction,
\[
    \ind\left(\Pop_{H,F}^{G,E}\right)=\ind\left(\D_{L_D}^{G,E}\right)-\ind\left(\D_{L_D}^{H,F}\right).
\]
Using~\eqref{E:index on double2} and~\eqref{E:index on double3}, we obtain
\begin{equation}\label{E:GL additivity3}
    \ind\left(\Pop_{H,F}^{G,E}\right)
    =\ind\left(\D_{L_D}^{G,H}\right)-\ind\left(\D_{L_D}^{E,F}\right).
\end{equation}
From~\eqref{E:index on double2} and~\eqref{E:GL additivity3}, we have
\begin{equation}\label{E:GL additivity4}
    \ind\left(\Pop_{F,F}^{E,E}\right)=0.
\end{equation}
We now relate the operator $\Pop_{H,F}^{G,E}$ to the operator $\Pop^{E,F}_{\rho,\phi}$ on $M$.
Consider the partitions $M=L\cup_{\partial L} \left(M\setminus L\right)$ and $L_\rm D=L\cup_{\partial L}L^-$.
Modify the Riemannian metrics and the Clifford structures on $M$ and $L_\rm D$ in a tubular neighborhood of $\partial L$ in such a way that Assumption~\ref{A:cut-and-paste} is satisfied.
Using the cut-and-paste construction described in Subsection~\ref{SS:cut-and-paste}, we obtain the operator $\Pop^{E,F}_{\rho,\phi}$ on $M$ and the operator $\Pop_{H,F}^{G,E}$ on $L_\rm D$.
From Identities~\eqref{E:GL additivity3},~\eqref{E:GL additivity4} and Theorem~\ref{T:relative}, we deduce
\begin{multline*}
    \ind\left(\Pop^{E,F}_{\rho,\phi}\right)+\ind\left(\D_{L_D}^{G,H}\right)-\ind\left(\D_{L_D}^{E,F}\right)\\
    =\ind\left(\Pop^{E,F}_{\rho,\phi}\right)+\ind\left(\Pop_{H,F}^{G,E}\right)\\
    =\ind\left(\Pop^{G,H}_{\rho,\phi}\right)+\ind\left(\Pop_{F,F}^{E,E}\right)=\ind\left(\Pop^{G,H}_{\rho,\phi}\right),
\end{multline*}
which implies Identity~\eqref{E:GL additivity2}.
\end{proof}


\begin{proof}[Proof of Theorem~\ref{T:index}]
Let $\rho$ be an admissible rescaling function, let $\phi$ be a compatible potential, and let $L\subset M$ be a compact submanifold with boundary whose interior contains an essential support of $(E,F)$.
Using Part~(a) and Part~(b) of Lemma~\ref{L:stability}, we assume that, in a neighborhood of $L$, we have $\phi=0$ and $\rho=1$.
Let $\bigl(\underline {\cal V},\dd_{\underline {\cal V}}\bigr)$ be as in Assumption~\ref{A:GL}.
By Part~(c) of Lemma~\ref{L:stability} and Identity~\eqref{E:index on double1}, the indices of $\Pop^{\underline {\cal V},\underline {\cal V}}_{\rho,\phi}$ and $\D_{L_{\D}}^{\underline {\cal V},\underline {\cal V}}$ vanish.
Using Lemma~\ref{L:GL additivity}, we deduce
\begin{multline*}
    \ind\left(\Pop^{E,F}_{\rho,\phi}\right)=
    \ind\left(\Pop^{E,F}_{\rho,\phi}\right)+\ind\left(\D_{L_{\D}}^{\underline {\cal V},\underline {\cal V}}\right)\\
    =\ind\left(\Pop^{\underline {\cal V},\underline {\cal V}}_{\rho,\phi}\right)+\ind\left(\D_{L_{\D}}^{E,F}\right)
    =\ind\left(\D_{L_{\D}}^{E,F}\right),
\end{multline*}
which concludes the proof.
\end{proof}


\section{The long neck problem}\label{S:LNP}
This section is devoted to proving a long neck principle in the spin setting.
Suppose $(X,g)$ is a compact Riemannian spin manifold with boundary and let $(E,F)$ be a pair of bundles with isomorphic typical fibers and whose supports are contained in the interior of $X$.
We will give conditions on the lower bound of $\scal_g$ and the distance between the supports of $E$, $F$ and $\partial X$ so that $\relind(X^\rm o;E,F)$ must vanish, where $X^\rm o$ is the interior of $X$.
To this end, we will use the distance function from $\partial X$ to construct a generalized Gromov-Lawson operator $\Pop_{\rho,\phi}^{E,F}$ on $X^\rm o$.
In Subsection~\ref{SS:ALNP}, we prove a vanishing theorem for the operator $\Pop_{\rho,\phi}^{E,F}$, from which we deduce an abstract long neck principle.
As applications, we prove Theorem~\ref{T:LNP} and Theorem~\ref{T:HLN} respectively in Subsections~\ref{SS:LNP} and~\ref{SS:HLN}.
Finally, in Subsection~\ref{SS:proof of Theorem C} we use a generalized Gromov-Lawson operator on a complete manifold to prove Theorem~\ref{T:complete metrics on torus}.


\subsection{A vanishing theorem on compact manifolds with boundary}\label{SS:ALNP}
We consider the following setup.
Let $(X,g)$ be a compact $n$-dimensional Riemannian spin manifold with boundary $\partial X$.
By removing the boundary, we obtain the open manifold $X^\rm o\defeq X\setminus \partial X$.
The metric $g$ induces an incomplete metric on $X^\rm o$, that we denote by the same symbol.
Let $(E,\nabla^E)$ and $(F,\nabla^F)$ be bundles of finitely generated projective Hilbert $A$-modules with inner product and metric connection over $(X^\rm o,g)$.
Suppose Assumption~\ref{A:GL} is satisfied.


\begin{definition}
For an essential support $K$ of $(E,F)$, a \emph{$K$-bounding function} is a smooth function $\nu\colon X^\rm o\to [0,\infty)$ such that $\nu=0$ on $X^\rm o\setminus K$ and
\begin{equation}\label{C:R^E>-nu}
    {\cal R}^E_x\geq -\nu(x)\qquad\text{and}\qquad {\cal R}^F_x\geq -\nu(x)\qquad x\in K.
\end{equation}
We say that $\nu$ is a \emph{bounding function} if it is a $K$-bounding function for some essential support $K$ of $(E,F)$.
\end{definition}


The next theorem states an abstract ``long neck principle'' for compact Riemannian spin manifolds with boundary.


\begin{theorem}\label{T:GLN}
Let $K$ be an essential support of $(E,F)$, let $\nu$ be a $K$-bounding function, and let $\sigma$ be a positive constant.
Suppose that 
\begin{equation}\label{C:sigma>nu}
    \frac{\scal_g(x)}{4}>\nu(x),\qquad x\in K;
\end{equation}
\begin{equation}\label{C:scal>sigma}
    {\scal_g(x)}\geq\sigma,\qquad x\in X^\rm o\setminus K;
\end{equation}
and
\begin{equation}\label{C:dist}
    \dist\bigl(K,\partial X\bigr)>\pi\sqrt{\frac{n-1}{n\sigma}}.
\end{equation}
Then $\relind(X^\rm o;E,F)=0$.
\end{theorem}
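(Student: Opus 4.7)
The plan is to reduce the theorem to an invertibility assertion for a generalized Gromov--Lawson operator on the incomplete manifold $X^\rm o$ and then to exhibit an explicit pair $(\rho,\phi)$ that makes this operator strictly positive. By Theorem~\ref{T:index}, for \emph{any} admissible rescaling function $\rho$ and any compatible potential $\phi$ one has $\ind\bigl(\Pop_{\rho,\phi}^{E,F}\bigr)=\relind(X^\rm o;E,F)$, so it suffices to construct $(\rho,\phi)$ such that $\bigl(\Pop_{\rho,\phi}^{E,F}\bigr)^2$ is uniformly bounded below by a positive constant.

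First I set up the parameters and auxiliary functions. Set $c\defeq \tfrac{1}{2}\sqrt{n\sigma/(n-1)}$, so that $c^2=\bar n\sigma/4$ and $\pi/(2c)=\pi\sqrt{(n-1)/(n\sigma)}$; by hypothesis \eqref{C:dist}, $d\defeq\dist(K,\partial X)>\pi/(2c)$. Choose a smoothing $\tau\colon X^\rm o\to(0,1]$ of $\dist(\cdot,\partial X)$ equal to $1$ outside a thin collar of $\partial X$ and satisfying the assumptions of Proposition~\ref{P:admissible criterion}; with $\alpha=1/2$, the function $\rho\defeq\gamma_{1/2}\circ\tau$ is then an admissible rescaling function and $\rho\equiv 1$ away from the collar. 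For the potential, fix $a\in(0,1)$ close enough to $1$ that $\pi/(2ac)<d$, and let $r\colon X^\rm o\to[0,\infty)$ be a smooth function agreeing with $\dist(\cdot,K)$ on the neck. Define $\phi$ to depend only on $r$: zero on a neighbourhood of $K$, equal to the solution $\phi(r)=ac\tan(acr)$ on a subinterval of the growth region ending slightly before the blowup time $\pi/(2ac)$, and constant equal to a large value $\phi_0$ on a neighbourhood of $\partial X$. Such a $\phi$ is a compatible potential with essential support a slight enlargement of $K$.

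Next I combine Lemma~\ref{L:Fredholm} with Proposition~\ref{P:rescaled Lichnerowitcz} applied to both $\Qop$ and $\Rop$ to obtain, for every $\omega>0$ and every $w\in\Gamma_\rm c(X^\rm o;W)$,
\[
\bigl\langle\Pop_{\rho,\phi}^2 w,w\bigr\rangle\geq\left\langle\left[\tfrac{\bar n\omega}{1+\omega}\bigl(\nscal_g+\mathcal R\bigr)\rho^4-\omega\rho^2|\dd\rho|^2+\phi^2-\rho^2|\dd\phi|\right] w,w\right\rangle,
\]
where $\mathcal R$ denotes the twist curvature endomorphism of $\Qop$ or $\Rop$ (vanishing outside $K$). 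I verify strict positivity of the bracket separately on three regions. On $K$, one has $\rho\equiv 1$ and $\phi\equiv0$, so the bracket reduces to $\tfrac{\bar n\omega}{1+\omega}(\nscal_g+\mathcal R)$, which is strictly positive by hypothesis~\eqref{C:sigma>nu} and the bound $\mathcal R\geq -\nu$, hence uniformly bounded below by compactness. On the growth portion of the neck, $\rho\equiv 1$ and $\dd\rho\equiv 0$, $\mathcal R\equiv 0$, and the Riccati identity $\phi'=a^2c^2+\phi^2$ yields bracket $\geq\tfrac{\bar n\sigma}{4}\bigl(\tfrac{\omega}{1+\omega}-a^2\bigr)$, positive for $\omega>a^2/(1-a^2)$. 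Finally, on the collar of $\partial X$, $\dd\phi\equiv 0$ and $\rho^2|\dd\rho|^2$ is uniformly bounded (equal to $1/4$ where $\rho=\tau^{1/2}$); choosing $\phi_0$ sufficiently large once $\omega$ has been fixed ensures bracket $\geq\phi_0^2-\omega\cdot\mathrm{const}>0$.

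Putting the three regional bounds together, $\Pop_{\rho,\phi}^2$ has a positive uniform lower bound, so $\Pop_{\rho,\phi}^{E,F}$ is invertible in $\mathcal L_A\bigl(L^2(X^\rm o;W)\bigr)$ and its index vanishes; by Theorem~\ref{T:index}, $\relind(X^\rm o;E,F)=0$. The delicate point will be the simultaneous calibration of the three parameters $(a,\omega,\phi_0)$: $a<1$ must be close enough to $1$ that the Riccati blowup time $\pi/(2ac)$ still fits inside $d$, $\omega$ must then be large enough to make $\omega/(1+\omega)>a^2$, and only afterwards may $\phi_0$ be chosen large enough to dominate $\omega\rho^2|\dd\rho|^2$ in the collar. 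The sharp constant $\pi\sqrt{(n-1)/(n\sigma)}$ enters precisely as the blowup time of $\phi'=c^2+\phi^2$ with $\phi(0)=0$, which is what couples hypothesis~\eqref{C:dist} to the Friedrich-improved Lichnerowicz estimate of Theorem~\ref{T:Friedrich}.
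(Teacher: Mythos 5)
Your proposal is correct and follows essentially the same route as the paper: the reduction via Theorem~\ref{T:index} (i.e.\ Corollary~\ref{C:lnp abstract vanishing}), the combined estimate from Lemma~\ref{L:Fredholm} and Proposition~\ref{P:rescaled Lichnerowitcz}, and a tangent-type Riccati potential whose blowup time is exactly what ties hypothesis~\eqref{C:dist} to the constant $\pi\sqrt{(n-1)/(n\sigma)}$, together with a $\sqrt{\tau}$-type rescaling near $\partial X$ and the calibration order $a$, then $\omega$, then $\phi_0$. The only differences are inessential: you keep $\rho\equiv1$ across the neck and confine its decay to a thin collar where $\phi$ is already constant, whereas the paper takes $\rho\equiv\sqrt{L}$ on the neck and scales the tangent amplitude by $L$ (Lemmas~\ref{L:potential} and~\ref{L:diff inequality}); also your ``smooth function agreeing with $\dist(\cdot,K)$'' must really be a Greene--Wu smoothing with gradient bound $1+\delta$, and the resulting extra $\delta$-terms (including the $-\delta\phi^2$ contribution on the growth region) are absorbed by your strict inequalities by choosing $\delta$ last, exactly as the paper does with $\tau_\delta$.
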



In order to prove this theorem, we will make use of the index theory developed in Section~\ref{S:GL}.
For an admissible rescaling function $\rho$ and a compatible potential $\phi$ on $X^\rm o$, let $\Pop_{\rho,\phi}\colon\Gamma_\rm c(X^\rm o;W)\to\Gamma_\rm c(X^\rm o;W)$ be the associated generalized Gromov-Lawson operator.
For the definition of the bundle $W$ and the operator $P_{\rho,\phi}$ and their properties, see Subsection~\ref{SS:compatible potentials}.
We start with proving an estimate for the operator $\Pop_{\rho,\phi}^2$ in this setting.
As in Section~\ref{S:admissible rescaling}, we use the notation $\bar n={n/(n-1)}$ and $\nscal_g(x)=\scal_g(x)/4$.


\begin{lemma}\label{L:LWS}
Let $\rho$ be an admissible rescaling function, let $\phi$ be a compatible potential and let $\nu$ a bounding function.
Then, for every $w\in\Gamma_\rm c(M;W)$ and every $\omega>0$, we have
\[
    \bigl<\Pop^2_{\rho,\phi}w,w\bigr>\geq\bigl<\Phi_{\rho,\phi}^{\omega,\nu}w,w\bigr>,
\]
where $\Phi_{\rho,\phi}^{\omega,\nu}\colon M\to\RR$ is the smooth function defined by the formula
\begin{multline}\label{E:Phi}
    \Phi_{\rho,\phi}^{\omega,\nu}(x)\defeq 
    \frac{\bar n\omega}{1+\omega}\rho^4(x)\nscal_g(x)
    -\frac{\bar n\omega}{1+\omega}\rho^4(x)\nu(x)\\
    -\omega\rho^2(x)\abs{\dd\rho_x}^2+\phi^2(x)-\rho^2(x) \abs{\dd\phi_x}
\end{multline}
for $x\in X^\rm o$.
\end{lemma}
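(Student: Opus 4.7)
The plan is to reduce the inequality to three ingredients already established in the excerpt: the chirality decomposition of $\Pop_{\rho,\phi}^2$ combined with Lemma~\ref{L:Fredholm}, the rescaled Lichnerowicz estimate of Proposition~\ref{P:rescaled Lichnerowitcz} applied to the twisted Dirac operators $\Qop_\rho$ and $\Rop_\rho$, and the pointwise lower bound $\cal R^E,\cal R^F\geq -\nu$ coming from the definition of a bounding function.

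First, I would decompose $w=w^+ +w^-$ according to the $\ZZ_2$-grading $W=W^+\oplus W^-$. Since $\Pop_{\rho,\phi}$ is odd and formally self-adjoint, $\langle \Pop_{\rho,\phi}^2 w,w\rangle=\langle \Pop_{\rho,\phi}^+ w^+,\Pop_{\rho,\phi}^+ w^+\rangle+\langle \Pop_{\rho,\phi}^- w^-,\Pop_{\rho,\phi}^- w^-\rangle$, and the scalar function $\Phi_{\rho,\phi}^{\omega,\nu}$ respects the grading, so it suffices to prove the estimate separately on each summand. Writing $w^+=u\oplus v$ with $u\in\Gamma_\rm c(S^+\tensor E)$ and $v\in\Gamma_\rm c(S^-\tensor F)$, I would then apply inequality~\eqref{E:Fred4} from Lemma~\ref{L:Fredholm}; the analogous bound for $w^-$ follows from~\eqref{E:Fred3}. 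After using $\langle\Qop_\rho^+u,\Qop_\rho^+u\rangle=\langle\Qop_\rho^2 u,u\rangle$ and $\langle\Rop_\rho^- v,\Rop_\rho^- v\rangle=\langle\Rop_\rho^2 v,v\rangle$, we are reduced to lower-bounding $\langle\Qop_\rho^2 u,u\rangle$ and $\langle\Rop_\rho^2 v,v\rangle$.

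Next I would apply Proposition~\ref{P:rescaled Lichnerowitcz} to each of $\Qop_\rho$ and $\Rop_\rho$ with the same parameter $\omega$, obtaining on $u$ the estimate
\[
\langle\Qop_\rho^2 u,u\rangle\geq\tfrac{\bar n\omega}{1+\omega}\langle\nscal_g\rho^4 u,u\rangle+\tfrac{\bar n\omega}{1+\omega}\langle\cal R^E\rho^4 u,u\rangle-\omega\langle\rho^2\abs{\dd\rho}^2u,u\rangle
\]
and its counterpart for $v$. The bounding function property gives $\cal R^E\geq -\nu$ pointwise as a fiberwise self-adjoint endomorphism (automatic outside an essential support $K$, where both sides vanish because $\nabla^E$ is trivial there and $\nu\equiv0$), and similarly $\cal R^F\geq -\nu$. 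Since $\rho^4$ and $\nu$ are nonnegative scalar functions, multiplying by $\rho^4$ preserves the fiberwise operator inequality and hence yields $\langle\cal R^E\rho^4 u,u\rangle\geq -\langle\nu\rho^4 u,u\rangle$ at the level of $A$-valued inner products. Summing the resulting estimates with the potential contributions from Lemma~\ref{L:Fredholm} reproduces exactly the integrand $\Phi_{\rho,\phi}^{\omega,\nu}$ of~\eqref{E:Phi} paired against $u\oplus v$.

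The main thing to be careful about is that all of the pointwise inequalities are fiberwise operator inequalities on Hilbert $A$-modules rather than on Hilbert spaces, so one must confirm that multiplication by a nonnegative scalar function preserves positivity in $\cal L_A$; this is immediate because $A$-valued inner products are $\RR$-bilinear in scalars and the defining inequality $\cal R^E+\nu\geq 0$ transports to $(\cal R^E+\nu)\rho^4\geq 0$. Beyond this, the remainder is bookkeeping: add the estimates for $w^+$ and $w^-$, use that $\Phi_{\rho,\phi}^{\omega,\nu}$ is scalar and diagonal with respect to the grading, and conclude $\langle\Pop_{\rho,\phi}^2 w,w\rangle\geq\langle\Phi_{\rho,\phi}^{\omega,\nu}w,w\rangle$ for every $w\in\Gamma_\rm c(M;W)$.
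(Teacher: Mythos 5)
Your proposal is correct and follows exactly the paper's route: the paper's proof of Lemma~\ref{L:LWS} is precisely the combination of Lemma~\ref{L:Fredholm} with Proposition~\ref{P:rescaled Lichnerowitcz} (plus the pointwise bound $\cal R^E,\cal R^F\geq-\nu$ from the definition of a bounding function), and your write-up just fills in the routine bookkeeping—the graded decomposition, $\left<\Qop_\rho^+u,\Qop_\rho^+u\right>=\left<\Qop_\rho^2u,u\right>$, and the fact that multiplying by $\rho^4\geq 0$ preserves the fiberwise inequality—all of which is sound.
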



\begin{proof}
It follows from Lemma~\ref{L:Fredholm} and Proposition~\ref{P:rescaled Lichnerowitcz}.
\end{proof}


\begin{corollary}\label{C:lnp abstract vanishing}
Suppose there exist an admissible rescaling function $\rho$, a compatible potential $\phi$, a bounding function $\nu$, and positive constants $\omega$ and $c$ such that
\begin{equation}\label{E:Phi>c}
    \Phi_{\rho,\phi}^{\omega,\nu}(x)\geq c,\qquad x\in X^\rm o.
\end{equation}
Then the class $\relind(X^\rm o;E,F)$ vanishes.
\end{corollary}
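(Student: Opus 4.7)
The plan is to combine the index identification from Section~\ref{S:GL} with the pointwise lower bound of Lemma~\ref{L:LWS} to show that $\Pop_{\rho,\phi}$ is invertible as a self-adjoint regular operator, whence its index class vanishes.

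First, I would invoke Theorem~\ref{T:index} to rewrite the relative index as an analytic index: with the chosen admissible rescaling function $\rho$ and compatible potential $\phi$, we have
\[
\relind(X^\rm o;E,F)=\ind\bigl(\Pop_{\rho,\phi}\bigr).
\]
Thus it suffices to prove that $\ind\bigl(\Pop_{\rho,\phi}\bigr)=0$.

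Next, I would use Lemma~\ref{L:LWS} together with the hypothesis $\Phi_{\rho,\phi}^{\omega,\nu}\geq c$ to obtain the global coercivity estimate
\[
\bigl<\Pop_{\rho,\phi}^2 w,w\bigr>\geq c\bigl<w,w\bigr>,\qquad w\in\Gamma_\rm c(X^\rm o;W).
\]
Since $\Gamma_\rm c(X^\rm o;W)$ is a core for the self-adjoint regular closure of $\Pop_{\rho,\phi}$ (which exists by Theorem~\ref{T:Fredholm} and Theorem~\ref{T:HR}), this inequality passes to the whole domain of $\Pop_{\rho,\phi}^2$. In the terminology of Subsection~\ref{SS:relative completeness}, Inequality~\eqref{E:invertible at infty} holds with $K=\emptyset$, so in fact $\Pop_{\rho,\phi}^2$ is uniformly positive on all of $X^\rm o$, not merely at infinity.

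The last step is to conclude vanishing of the index from this uniform positivity. Since $\Pop_{\rho,\phi}^2\geq c$ as a self-adjoint regular operator on the Hilbert $A$-module $L^2(X^\rm o;W)$, functional calculus yields that $\Pop_{\rho,\phi}^2+1+t^2$ is invertible for all $t\geq 0$ and that the bounded transform
\[
F\defeq \Pop_{\rho,\phi}\bigl(\Pop_{\rho,\phi}^2+1\bigr)^{-1/2}\in\cal L_A(H)^{\rm odd}
\]
satisfies $1-F^2=\bigl(\Pop_{\rho,\phi}^2+1\bigr)^{-1}\leq(1+c)^{-1}\id$, so $F^2\geq c(1+c)^{-1}\id$. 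Hence $F$ is invertible in $\cal L_A(H)$, so in particular the odd self-adjoint Fredholm operator $F$ represents the trivial class; that is, $\ind\bigl(\Pop_{\rho,\phi}\bigr)=0$, which combined with the first step gives the conclusion. The only mildly delicate step is the passage from the core estimate to the estimate on the full domain of the closure, but this is standard given the self-adjointness and regularity established in Theorem~\ref{T:Fredholm}.
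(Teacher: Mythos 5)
Your proposal is correct and follows essentially the same route as the paper: Lemma~\ref{L:LWS} with the hypothesis $\Phi_{\rho,\phi}^{\omega,\nu}\geq c$ gives uniform positivity of $\Pop_{\rho,\phi}^2$, hence vanishing of $\ind\bigl(\Pop_{\rho,\phi}\bigr)$, and Theorem~\ref{T:index} identifies this index with $\relind(X^\rm o;E,F)$. The extra details you supply (passing from the core to the full domain and the functional-calculus argument that the bounded transform is invertible) are exactly the standard steps the paper leaves implicit.
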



\begin{proof}
From Lemma~\ref{L:LWS}, Condition~\eqref{E:Phi>c} implies that the operator $\Pop_{\rho,\phi}^2$ is invertible.
Now the thesis follows from Theorem~\ref{T:index}.
\end{proof}


For the remaininig part of this section, we use the notation
\[
    \bar\sigma\defeq\frac{\sigma}{4}
\]
for a positive constant $\sigma$.

\begin{lemma}\label{L:potential}
Suppose $\Lambda>\pi/\sqrt{\bar n\sigma}$ for some constant $\sigma>0$.
Then there exist constants $\omega>0$, $L\in(0,\Lambda)$ and a smooth function $Y\colon(0,\infty)\to [0,\infty)$ satisfying
\begin{enumerate}[label=\emph{(\arabic*)}]
   \item $Y=0$ in a neighborhood of $[\Lambda,\infty)$;
   \item $Y=Y_0$ in a neighborhood of $(0,L]$, where $Y_0$ is a constant such that $Y^2_0>\omega/4$;
   \item there exists a constant $c>0$ such that
   \begin{equation*}
       \frac{\bar n\bar\sigma\omega}{1+ \omega}L^2+Y^2(t)-L\abs{Y^\prime(t)}\geq c,
   \end{equation*}
   for $t$ varying in a neighborhood of $[L,\Lambda]$.
\end{enumerate}
Moreover, $L$ can be chosen arbitrary small.
\end{lemma}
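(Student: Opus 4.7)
The plan is to construct $Y$ explicitly as the composition
\begin{equation*}
    Y(t)\defeq b\tan\bigl(b\Phi(t)/L\bigr),
\end{equation*}
for suitable parameters $b,L>0$ and a smooth non-increasing function $\Phi\colon(0,\infty)\to[0,\Phi_0]$ which equals the constant $\Phi_0$ on a right-neighborhood of $(0,L]$ and vanishes on a left-neighborhood of $[\Lambda,\infty)$. The crucial feature is that $\Phi$ will be built with $|\Phi'|\leq 1$ everywhere. Writing $T\defeq\tan(b\Phi/L)$ and introducing the abbreviation $a\defeq\bar n\bar\sigma\omega/(1+\omega)$, the identity $\sec^2=1+\tan^2$ yields $L|Y'|=b^2(1+T^2)|\Phi'|$, so
\begin{equation*}
    aL^2+Y^2-L|Y'|=aL^2+b^2\bigl[T^2(1-|\Phi'|)-|\Phi'|\bigr]\geq aL^2-b^2,
\end{equation*}
the last bound using $|\Phi'|\leq 1$. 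Setting $c\defeq aL^2-b^2$, condition~(3) will hold globally (in particular on any neighborhood of $[L,\Lambda]$) provided $c>0$.

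The hypothesis $\Lambda>\pi/\sqrt{\bar n\sigma}$, combined with $\bar\sigma=\sigma/4$, is equivalent to $4\bar n\bar\sigma\Lambda^2>\pi^2$. Since $a\nearrow\bar n\bar\sigma$ as $\omega\to\infty$, I first fix $\omega$ so large that $a\Lambda^2>\pi^2/4$ with strict slack. Given any desired small $L>0$ and auxiliary small parameters $\delta_1,\delta_2,\epsilon,\mu>0$, set $L'\defeq L+\delta_1$, $\Lambda'\defeq\Lambda-\delta_2$, $\Phi_0\defeq(1-\epsilon)(\Lambda'-L')$, and $b\defeq L(\pi/2-\mu)/\Phi_0$. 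I build $\Phi$ by prescribing $\Phi\equiv\Phi_0$ on $(0,L']$, $\Phi\equiv 0$ on $[\Lambda',\infty)$, and $\Phi(t)\defeq\Phi_0\,\rho\bigl((t-L')/(\Lambda'-L')\bigr)$ on $[L',\Lambda']$ for a smooth profile $\rho\colon[0,1]\to[0,1]$ with $\rho(0)=1$, $\rho(1)=0$, $\rho'(0)=\rho'(1)=0$, and $\|\rho'\|_\infty\leq 1/(1-\epsilon)$ (such $\rho$ is obtained by mollifying a piecewise-linear cutoff near the endpoints). Then $|\Phi'|\leq(1-\epsilon)\cdot\frac{1}{1-\epsilon}=1$ throughout, and $Y=b\tan(b\Phi/L)$ is smooth, equals the constant $Y_0\defeq b\cot\mu$ on $(0,L']$, and vanishes on $[\Lambda',\infty)$; this immediately gives condition~(1) and the first half of~(2).

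Two bookkeeping constraints remain. First, $Y_0^2>\omega/4$ (the second half of~(2)) reduces to $b\cot\mu>\sqrt\omega/2$, which for any fixed $L$ is achieved by taking $\mu\to 0^+$, since $\cot\mu\to\infty$. Second, $b^2<aL^2$ (so that $c>0$ in~(3)) reduces to $(\pi/2-\mu)^2/\Phi_0^2<a$, and as $L,\delta_1,\delta_2,\epsilon,\mu$ all tend to $0$ the left side converges to $\pi^2/(4\Lambda^2)$, strictly less than $a$ by our choice of $\omega$. The main obstacle is this parameter coordination: the strict inequality $\Lambda>\pi/\sqrt{\bar n\sigma}$ supplies exactly the slack needed to push $b$ close to the critical value $\pi L/(2\Phi_0)$ (making $Y_0$ large) while preserving $b^2<aL^2$, and this must be arranged uniformly as $L\to 0^+$ in order to realize the ``$L$ can be chosen arbitrarily small'' part of the conclusion.
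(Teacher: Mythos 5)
Your construction is correct, and at heart it is the same mechanism as the paper's: the function $\tan$ is used as a (super)solution of the Riccati-type inequality, and the largeness of the plateau value $Y_0$ (needed for $Y_0^2>\omega/4$) is obtained by pushing the argument of $\tan$ close to its pole. The packaging differs in two ways. The paper takes the exact solution $Y_{A,B}(t)=AB\tan\bigl(A(\Lambda-t)\bigr)$ of $A^2B^2+Y^2-B\abs{Y'}=0$ with $A^2=\bar n\bar\sigma_1\omega/(1+\omega)$, generating the strict slack in condition~(3) by splitting $\sigma=\sigma_1+\sigma_2$, and then truncates and smooths a posteriori (constant on $(0,T]$, zero near $[\Lambda,\infty)$, with $\abs{\tilde Y_\epsilon'}\leq\abs{\tilde Y'}$); you instead compose $\tan$ with a $1$-Lipschitz cutoff profile $\Phi$, so that the cutoff is built in from the start and a single pointwise computation gives the global bound $aL^2+Y^2-L\abs{Y'}\geq aL^2-b^2$, the slack now coming from keeping $b$ strictly below the critical value $\sqrt{a}\,L$ rather than from splitting $\sigma$. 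Your approach arguably streamlines the verification of~(3), since the inequality holds on all of $(0,\infty)$ by one estimate, while the paper's has the advantage of not needing the profile bookkeeping. Two small points to tighten: requiring only $\rho'(0)=\rho'(1)=0$ does not by itself make $\Phi$ smooth at the gluing points, but the mollified piecewise-linear cutoff you invoke is constant near the endpoints, which does suffice, so state that; and the closing "must be arranged uniformly as $L\to0^+$" deserves one explicit line, namely that once $\omega$ is fixed with $a\Lambda^2>\pi^2/4$, every $L<\Lambda-\pi/(2\sqrt a)$ admits admissible $\delta_1,\delta_2,\epsilon$ with $\Phi_0>\pi/(2\sqrt a)$, after which $b^2<aL^2$ holds for every $\mu\in(0,\pi/2)$, so shrinking $\mu$ to enforce $b\cot\mu>\sqrt\omega/2$ creates no tension.
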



\begin{proof}
Write $\sigma=\sigma_1+\sigma_2$, with $\sigma_1$ and $\sigma_2$ positive numbers.
We use the notation $\bar\sigma_1\defeq\sigma_1/4$ and $\bar\sigma_2\defeq\sigma_2/4$.
Observe that $\frac{1+\omega}{\omega}\to 1$, as $\omega\to\infty$.
Since $\Lambda>\pi/\sqrt{\bar n\sigma}$, we choose $\omega>0$ and $\sigma_1\in(0,\sigma)$ such that
\begin{equation}\label{E:sigma_1}
    \Lambda>\pi\sqrt{\frac{1+\omega}{\bar n\sigma_1\omega}}.
\end{equation}
This is achieved by taking $\omega$ large enough and $\sigma_1$ close enough to $\sigma$.

For positive constants $A$ and $B$, consider the function
\[
    Y_{A,B}(t)=AB\tan\bigl(A(\Lambda-t)\bigr),\qquad \Lambda-\frac{\pi}{2A}<t<\Lambda+\frac{\pi}{2A}.
\]
Observe that $Y_{A,B}$ satisfies
\begin{equation}\label{E:YAB}
	A^2B^2+Y_{A,B}^2(t)-B\abs{Y_{A,B}^\prime(t)}=0\qquad\text{and}\qquad Y_{A,B}(\Lambda)=0.
\end{equation}
Moreover,
\begin{equation}\label{E:YAB1}
	Y_{A,B}(t)\to\infty,\qquad\text{as }t\to\left(\Lambda-\frac{\pi}{2A}\right)^+.
\end{equation}
Observe that the point at which the function $Y_{A,B}$ goes to infinity is uniquely determined by the constant $A$.

We now make the following choice for $A$ and $B$.
Set
\[
    A\defeq\sqrt{\frac{\bar n\bar\sigma_1\omega}{1+\omega}}.
\]
Notice that
\[
    \frac{\pi}{2A}=\frac{\pi}{2}{\sqrt{\frac{1+\omega}{\bar n\bar\sigma_1\omega}}}=\pi\sqrt{\frac{1+\omega}{\bar n\sigma_1\omega}}.
\]
Using~\eqref{E:sigma_1}, we deduce 
\[
    \Lambda-\frac{\pi}{2A}=\Lambda-\pi\sqrt{\frac{1+\omega}{\bar n\sigma_1\omega}}>0.
\]
Choose
\[
    L\in\left(0,\Lambda-\pi\sqrt{\frac{1+\omega}{\bar n\sigma_1\omega}}\right)
\]
and set $B\defeq L$.
With this choice for $A$ and $B$, we obtain the function
\[
   \tilde Y(t)\defeq \sqrt{\frac{\bar n\bar\sigma_1\omega}{1+\omega}}L\tan\left(\sqrt{\frac{\bar n\bar\sigma_1\omega}{1+\omega}}(\Lambda-t)\right),
   \qquad \Lambda-\pi\sqrt{\frac{1+\omega}{\bar n\sigma_1\omega}}<t<\Lambda+\pi\sqrt{\frac{1+\omega}{\bar n\sigma_1\omega}}.
\]
From~\eqref{E:YAB}, we deduce that 
\begin{equation}\label{C:Y_eta(Lambda)=0}
	\tilde Y(\Lambda)=0
\end{equation} 
and
\begin{equation}\label{C:inequality for Y_eta}
    \frac{\bar n\bar\sigma\omega}{1+\omega}L^2+\tilde Y^2(t)-L\abs{\tilde Y^\prime(t)}
    =\frac{\bar n\bar\sigma_2 \omega}{1+\omega}\eta^2\Lambda^2>0.
\end{equation}
Moreover, by~\eqref{E:YAB1} we deduce that we can choose $T\in\left(\Lambda-\pi\sqrt{\frac{1+\omega}{\bar n\sigma_1\omega}},\Lambda\right)$ such that
\[
    \tilde Y^2(T)>\frac{\omega}{4}.
\]

For $\epsilon>0$ small enough, let $\tilde Y_\epsilon\colon (0,\infty)\to [0,\infty)$ be a smooth function such that 
\begin{itemize}
    \item $\tilde Y_{\epsilon}$ is constant in a neighborhood of $(0,T]$;
    \item $\tilde Y_{\epsilon}=0$ in a neighborhood of $[\Lambda,\infty)$;
    \item $\tilde Y_{\epsilon}$ is $\epsilon$-close to $\tilde Y$ on $[T,\Lambda]$;
    \item $\abs{\tilde Y_{\epsilon}^\prime(t)}\leq\abs{\tilde Y^\prime(t)}$, for $t\in(T,\Lambda)$.
\end{itemize}
Since $L<T$, for $\epsilon$ small enough, we obtain a function satisfying Properties~(1),~(2) and~(3).
The final statement follows from the fact that $L$ can be arbitrarily chosen in the set $\left(0,\Lambda-\pi\sqrt{\frac{1+\omega}{\bar n\sigma_1\omega}}\right)$, where $\sigma_1$ and $\omega$ satisfy~\eqref{E:YAB}.
\end{proof}


\begin{lemma}\label{L:diff inequality}
Suppose $\Lambda>\pi/\sqrt{\bar n\sigma}$.
Then there exist a constant $\omega >0$ and smooth functions $Z\colon (0,\infty)\to (0,1]$ and $Y\colon(0,\infty)\to [0,\infty)$ satisfying
\begin{enumerate}[label=\emph{(\alph*)}]
    \item $Z(t)=d\sqrt{t}$ near $0$, for some constant $d>0$;
    \item $Z$ is constant in a neighborhood of $[\Lambda,\infty)$;
    \item $Y$ is constant near $0$;
    \item $Y=0$ in a neighborhood of $[\Lambda,\infty)$;
    \item there exists a constant $c>0$ such that the functions $Z$ and $Y$ satisfy the differential inequality
    \begin{equation*}
            \qquad \frac{\bar n\bar\sigma \omega }{1+\omega }Z^4(t)
            -\omega  Z^2(t)\abs{Z^\prime(t)}^2+Y^2(t)-Z^2(t) \abs{Y^\prime(t)}\geq c,
    \end{equation*}
    for all $t>0$.
\end{enumerate}
\end{lemma}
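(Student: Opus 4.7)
My plan is to parametrise $Z$ through its square $\kappa \defeq Z^2$, exploiting the algebraic identity $Z^2\abs{Z'}^2 = (\kappa')^2/4$, so that the penalty term $-\omega Z^2\abs{Z'}^2$ in~(e) becomes a pointwise constraint on $\abs{\kappa'}$. First I would apply Lemma~\ref{L:potential} to obtain $\omega > 0$, $L \in (0,\Lambda)$ (chosen small enough that $L \leq 1$), and a smooth $Y$ satisfying its three properties. Write $Y_0$ for the constant value of $Y$ on a neighbourhood of $(0,L]$; Property~(2) gives the strict slack $\gamma \defeq Y_0^2 - \omega/4 > 0$, and I fix $\mu > 0$ small enough that $\omega\bigl((1+\mu)^2 - 1\bigr)/4 < \gamma$.

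Next I would construct a smooth non-decreasing function $\kappa\colon (0,\infty) \to (0,L]$ with $\kappa(t) = t$ in a neighbourhood of $0$, with $\kappa(t) \equiv L$ on a neighbourhood of $[L,\infty)$, and with $\max_t \kappa'(t) \leq 1+\mu$. The transition region $\{0 < \kappa < L\}$ is an interval $(t_1,t_2) \subset (0,L)$ over which $\kappa$ increases by $L - t_1$ within a length $t_2 - t_1 < L - t_1$, so the mean value of $\kappa'$ there is strictly greater than $1$; nevertheless a standard smoothing with $t_2$ taken close enough to $L$ keeps $\max\kappa'$ as close to $1$ as desired, and in particular $\leq 1+\mu$. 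Setting $Z(t) \defeq \sqrt{\kappa(t)}$, one obtains a smooth function $Z\colon (0,\infty) \to (0,1]$ with $Z(t) = \sqrt{t}$ near $0$ (giving~(a) with $d = 1$) and $Z \equiv \sqrt{L}$ on a neighbourhood of $[L,\infty) \supseteq [\Lambda,\infty)$ (giving~(b)); properties~(c) and~(d) are inherited directly from $Y$.

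To verify~(e) I would split into three regions. On a neighbourhood of $[\Lambda,\infty)$, both $Y$ and $Z'$ vanish, so the left-hand side equals $\frac{\bar n\bar\sigma\omega}{1+\omega}L^2 > 0$. On a neighbourhood of $[L,\Lambda]$, $\kappa \equiv L$, so $Z' = 0$ and $Z^2 = L$, reducing~(e) to Property~(3) of Lemma~\ref{L:potential} with lower bound $c_0$. On $(0,L)$, $Y$ is identically $Y_0$ so $Y' = 0$, and $Z^2\abs{Z'}^2 = (\kappa')^2/4 \leq (1+\mu)^2/4$; thus the left-hand side is at least $Y_0^2 - \omega(1+\mu)^2/4 = \gamma - \omega\bigl((1+\mu)^2 - 1\bigr)/4 > 0$. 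Taking $c$ to be the minimum of the three resulting positive lower bounds yields~(e). The key technical point is the construction of $\kappa$ with $\max\kappa' \leq 1+\mu$: since the transition of $\kappa$ is forced to lie inside $(0,L)$ so that $Y'$ vanishes there, a mean-value argument rules out $\max\kappa' \leq 1$, so the argument depends crucially on the strict slack $Y_0^2 > \omega/4$ furnished by Lemma~\ref{L:potential} to absorb the unavoidable excess encoded by $\mu$.
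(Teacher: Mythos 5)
Your proposal is correct and takes essentially the same route as the paper: both invoke Lemma~\ref{L:potential}, build $Z$ equal to a multiple of $\sqrt{t}$ near $0$ and constant $\sqrt{L}$ from a point just below $L$ onward, verify (e) separately on a neighborhood of $(0,L]$ (using $Y\equiv Y_0$) and of $[L,\Lambda]$ (using Property~(3)), and absorb the unavoidable slight excess of $Z^2\abs{Z^\prime}^2$ over $1/4$ with the strict slack $Y_0^2>\omega/4$. The only difference is bookkeeping: you control $(Z^2)^\prime\leq 1+\mu$ via $\kappa=Z^2$, whereas the paper bounds $Z\leq(1+\delta)\sqrt{t}$ and $Z^\prime\leq(1+\delta)/(2\sqrt{t})$ separately.
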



\begin{proof}
Pick constants $\omega >0$, $L\in(0,\Lambda]$, with $L\leq 1$, and a smooth function $Y\colon(0,\infty)\to [0,\infty)$ satisfying Properties~(1)--(3) of Lemma~\ref {L:potential}.
Observe that Properties~(1) and~(2) imply that $Y$ satisfies Properties~(c) and~(d).
Using Property~(2) of Lemma~\ref {L:potential}, pick a smooth function $Z\colon (0,\infty)\to (0,1]$ and a constant $\delta>0$ such that
\begin{enumerate}[label=(\roman*)]
	\item $Z=\sqrt{L}$ in a neighborhood of $[L,\infty)$;
	\item $Z(t)=(1+\delta)\sqrt t$ for $t$ near $0$ and $Z(t)\leq(1+\delta)\sqrt t$ in a neighborhood of $(0,L]$;
	\item $0\leq Z^\prime(t)\leq (1+\delta)/(2\sqrt t)$ for all $t>0$;
	\item $Y_0^2>(1+\delta)^4\omega/4$.
\end{enumerate}
By Properties~(i) and (ii), it follows that $Z$ satisfies~(a) and~(b).
In order to conclude the proof, it remains to show that $Y$ and $Z$ satisfy ~(e).

Let $U_0$ be an open neighborhood of $(0,L]$ such that $Y=Y_0$ on $U_0$.
Let $U_1$ be an open neighborhood of $[L,\infty)$ such that $Z=\sqrt{L }$ on $U_1$.
We will prove Property~(e) by analyzing separately these two open sets.
Let us begin with $U_0$.
On this set, $Y=Y_0$.
Moreover, by Properties~(ii) and~(iii) we have 
\[
	Z^2(t)\abs{Z^\prime(t)}^2 \leq \frac{(1+\delta)^4}{4},\qquad t\in U_0.
\]
Therefore, using Property~(iv) we obtain
\begin{equation}\label{E:U_1}
	\frac{\bar n\bar\sigma \omega}{1+\omega}Z^4(t)
	-\omega Z^2(t)\abs{Z^\prime(t)}^2+Y^2(t)-Z^2(t) \abs{Y^\prime(t)}
	\geq
	Y_0^2-(1+\delta)^4 \frac{\omega}{4}>0,
\end{equation}
for every $t\in U_0 $.
Let us now analyze $U_1$.
On this set, $Z=\sqrt{L}$.
Therefore, using Property~(3) of Lemma~\ref {L:potential}, we deduce that there exists a constant $c>0$ such that
\begin{multline} \label{E:U_2}
	\frac{\bar n\bar\sigma \omega}{1+\omega}Z^4(t)
	-\omega Z^2(t)\abs{Z^\prime(t)}^2+Y^2(t)-Z^2(t) \abs{Y^\prime(t)}\\
	=	\frac{\bar n\bar\sigma \omega}{1+\omega} L^2
	+Y^2(t)-L \abs{Y^\prime(t)}\geq c,
\end{multline}
for every $t\in U_1 $.
Since $U_0\cup U_1=(0,\infty)$,~\eqref {E:U_1} and~\eqref {E:U_2} imply that $Y$ and $Z$ satisfy Property~(e).
This concludes the proof.
\end{proof}


\begin{proof}[Proof of Theorem~\ref{T:GLN}]
By Corollary~\ref{C:lnp abstract vanishing}, it suffices to show that, when Conditions~\eqref{C:sigma>nu} and~\eqref{C:dist} are satisfied, there exist an admissible rescaling function $\rho$, a compatible potential $\phi$ and positive constants $\omega$ and $c$ such that
\begin{equation}\label{E:Phi_L>c}
    \Phi_{\rho,\phi}^{\omega,\nu}(x)\geq c,\qquad x\in X^\rm o,
\end{equation}
where $\Phi_{\rho,\phi}^{\omega,\nu}$ is the function defined by~\eqref{E:Phi}.

Using Condition~\eqref{C:dist}, pick a constant $\Lambda$ satisfying
\begin{equation}
    \frac{\pi}{\sqrt{\bar n\sigma}}<\Lambda<\dist\bigl(K,\partial X\bigr).
\end{equation}
Choose a constant $\omega>0$ and smooth functions $Z\colon (0,\infty)\to (0,1]$ and $Y\colon(0,\infty)\to [0,\infty)$ satisfying Conditions~(a)--(e) of Lemma~\ref{L:diff inequality}.

Let $\tau$ be the distance function from the boundary of $X$.
We regard $\tau$ as a function on $X^\rm o$.
Fix a constant $\delta>0$ and consider the function 
\[
    \mu(x)\defeq\min\bigl({\tau(x)}/{2},\delta\bigr),\qquad x\in X^\rm o.
\]
By~\cite[Proposition~2.1]{GW79}, there exists a smooth function $\tau_\delta$ on $X^\rm o$ such that 
\begin{equation}\label{C:tau_delta}
    \abs{\tau(x)-\tau_\delta(x)}<\mu(x)\qquad\forall x\in X^\rm o\qquad\text{and}\qquad \norm{\dd\tau_\delta}_\infty<1+\delta.
\end{equation}
Observe that these conditions imply that $\tau_\delta$ is positive and goes to $0$ at infinity.
Choose a constant $\Lambda_1$ such that $\Lambda<\Lambda_1<\dist(K,\partial X)$.
Consider the open sets
\begin{equation}
    \Omega_0\defeq\left\{x\in X^\rm o\mid \tau_\delta(x)>\Lambda\right\}\qquad\text{and}\qquad
    \Omega_1\defeq\left\{x\in X^\rm o\mid \tau_\delta(x)<\Lambda_1\right\}.
\end{equation}
Observe that $X^\rm o=\Omega_0\cup\Omega_1$ and that, by taking $\delta$ small enough, $K\subset \Omega_0$ and $\Omega_1\subset X^\rm o\setminus K$.

Define functions $\rho\colon X^\rm o\to(0,1]$ and $\phi\colon X^\rm o\to[0,\infty)$ by setting
\begin{equation}
    \rho\defeq Z\circ\tau_\delta\qquad\text{and}\qquad\phi\defeq Y\circ\tau_\delta.
\end{equation}
By Proposition~\ref{P:admissible criterion}, Remark~\ref{R:coinciding at infty} and Property~(a) of Lemma~\ref{L:diff inequality}, $\rho$ is an admissible rescaling function.
Moreover, by Property~(b) of Lemma~\ref{L:diff inequality}, we have
\begin{enumerate}[label=(\roman*)]
    \item $\rho=\rho_0$ on $\Omega_0$, for a constant $\rho_0>0$.
\end{enumerate}
By Properties~(c) and~(d) of Lemma~\ref{L:diff inequality}, $\phi$ is a compatible potential and satisfies
\begin{enumerate}[resume*]
    \item $\phi=0$ on $\Omega_0$.
\end{enumerate}
Using~(i) and~(ii), we deduce
\begin{equation}\label{E:Omega_0}
    \Phi_{\rho,\phi}^{\omega,\nu}(x)=
    \frac{\bar n\omega}{1+\omega}\rho_0^4\nscal_g(x)
    -\frac{\bar n\omega}{1+\omega}\rho_0^4\nu(x),\qquad x\in\Omega_0.
\end{equation}
Using Condition~\eqref{C:sigma>nu} and Condition~\eqref{C:scal>sigma}, from~\eqref{E:Omega_0} we obtain
\[
    \Phi_{\rho,\phi}^{\omega,\nu}(x)\geq
    \frac{\rho_0^4\bar n \omega}{1+\omega}\inf_{y\in K}\bigl(\nscal_g(y)-\nu(y)\bigr)>0,\qquad x\in K
\]
and
\[
    \Phi_{\rho,\phi}^{\omega,\nu}(x)\geq\frac{\bar n\bar\sigma\omega}{1+\omega}\rho_0^4>0,\qquad x\in\Omega_0\setminus K.
\]
Therefore, Inequality~\eqref{E:Phi_L>c} holds on $\Omega_0$.

In order to complete the proof, we will show that, for $\delta$ small enough, Inequality~\eqref{E:Phi_L>c} holds on $\Omega_1$ as well.
Using~\eqref{C:tau_delta}, we have
\begin{equation*}
    \abs{\dd\rho_x}\leq(1+\delta)\abs{Z^\prime(\tau_\delta(x))}\qquad\text{and}\qquad
    \abs{\dd\phi_x}\leq(1+\delta)\abs{Y^\prime(\tau_\delta(x))}.
\end{equation*}
Since $\Omega_1\subset X^\rm o\setminus K$, $\scal_g\geq\sigma$ on $\Omega_1$.
Since $\nu$ is a $K$-bounding function, $\nu=0$ on $\Omega_1$.
By taking $\delta$ small enough and using Property~(e) of Lemma~\ref{L:diff inequality}, we deduce that there exists a constant $c_1>0$ such that
\begin{multline*}
    \Phi_{\rho,\phi}^{\omega,\nu}(x)\geq
    \frac{\bar n\bar\sigma \omega}{1+\omega}\rho^4(x)
    -\omega \rho^2(x)\abs{\dd\rho_x}^2+\phi^2(x)-\rho^2(x) \abs{\dd\phi_x}\\
    \geq\frac{\bar n\bar\sigma \omega }{1+\omega }Z^4(\tau_\delta(x))
    -\omega Z^2(\tau_\delta(x))(1+\delta)^2\abs{Z^\prime(\tau_\delta(x))}^2\\
    +Y^2(\tau_\delta(x))
    -Z^2(\tau_\delta(x)) (1+\delta)\abs{Y^\prime(\tau_\delta(x))}\geq c_1
\end{multline*}
for every $x\in\Omega_1$.
This concludes the proof.
\end{proof}


\subsection{Proof of Theorem~\ref{T:LNP}}\label{SS:LNP}
Assume first that $n$ is even.
Let $(E,\nabla^E)$ and $(F,\nabla^F)$ be the Hermitian bundles with metric connections constructed in Example~\ref{E:GL relative} using the map $f$.
Recall that $(E,\nabla^E)$ and $(F,\nabla^F)$ satisfy Assumption~\eqref{A:GL}, with $\supp(\dd f)$ an essential support of $(E,F)$, and
\begin{texteqn}\label{E:ALN4}
    $\relind(X^\rm o;E,F)= 0$ implies $\deg(f)=0$.
\end{texteqn}
Moreover, ${\cal R}^F=0$ everywhere, ${\cal R}^E=0$ on $M\setminus\supp(\dd f)$ and, since $f$ is strictly area decreasing, ${\cal R}^E\geq -\theta n(n-1)/4$ on $\supp(\dd f)$, for some $\theta\in(0,1)$.
By Condition~\eqref{E:ALN1}, there exists a compact subset $K\subset X^\rm o$ such that $\supp(\dd f)\subset K^\rm o$ and
\begin{equation}
    \scal_g(x)\geq\theta_1n(n-1),\qquad x\in K
\end{equation}
for some constant $\theta_1\in(\theta,1)$.
Observe that $K$ is an essential support of $(E,F)$.
Let $\nu\colon X^\rm o\to [0,\theta n(n-1)/4]$ be a smooth function such that $\nu=\theta n(n-1)/4$ on $\supp(\dd f)$ and $\nu=0$ on $X^\rm o\setminus K$.
Then $\nu$ is a $K$-bounding function and
\begin{equation*}\label{E:ALN3}
    \frac{\scal_g}{4}\geq\frac{\theta_1n(n-1)}{4}>\frac{\theta n(n-1)}{4}\geq\nu\qquad \text{on } K.
\end{equation*}
The last inequality,~\eqref{E:ALN2} and Theorem~\ref{T:GLN} imply that $\relind(X^\rm o;E,F)= 0$.
The thesis now follows from~\eqref{E:ALN4}.

Suppose now that $n$ is odd and the map $f$ is constant in a neighborhood of $\partial X$.
As in the proof of~\cite[Proposition~6.10]{LM89}, we fix a $1$-contracting map $\mu\colon S^n\times S^1\to S^{n+1}$ of degree one, which is constant on $\{\ast\}\times S^n\cup S^1\times \{\ast^\prime\}$.
Here, $\ast$ and $\ast^\prime$ are distinguished points respectively in $S^n$ and $S^1$ with $f(\partial X)\subset\{\ast\}$.
For $R>0$, let $S^1_R$ be the circle of radius $R$ and consider the manifold $X_1\defeq X\times S^1_R$ equipped with the product metric, denoted by $g_1$.
Let $f_1\colon X_1\to S^{n+1}$ be the map given by the composition $\mu\circ(f\times 1/R)$.
Then $f_1$ is area decreasing for $R$ large enough.
Moreover, $\scal_{g_1}=\scal_g\geq\sigma$ and $\dist(\supp(\dd f_1),\partial X_1)=\dist(\supp(\dd f),\partial X)$.
Now the thesis follows from the even dimensional case.
\qed


\subsection{Proof of Theorem~\ref{T:HLN}}\label{SS:HLN}
Let $g$ be a Riemannian metric on $X$ whose scalar curvature is bounded from below by a constant $\sigma>0$.
Consider the incomplete Riemannian manifold $X^\rm o$.
Let $(E,\nabla^E)$ and $(F,\nabla^F)$ be the flat bundles on $X^\rm o$ with typical fiber $C^\ast\Gamma$ constructed in Example~\ref{E:higher relative}.
Recall that, since $Y$ satisfies Condition~\eqref{C:relative higher index}, the class $\relind(X^\rm o;E,F)\in\KO_n(C^\ast\Gamma)$ does not vanish.
Moreover, for $0<R<\rad^\odot_g(\partial X)$, the geodesic collar neighborhoods $B_R(S^{n-1}_1),\ldots,B_R(S^{n-1}_N)$ are pairwise disjoint and the closure of the set
\[
    X\setminus \bigsqcup_{i=1}^NB_R(S^{n-1}_i)
\]
is an essential support of $(E,F)$ that we denote by $K_R$.
Since $\dist(K_R,\partial X)=R$ and $\relind(X^\rm o;E,F)\neq 0$, by Theorem~\ref{T:GLN} we deduce
\[
    R\leq\pi\sqrt{\frac{n-1}{n\sigma}},\qquad 0<R<\rad^\odot_g(\partial X)
\]
from which Inequality~\eqref{E:HLN1} follows.


\subsection{Proof of Theorem~\ref{T:complete metrics on torus}}\label{SS:proof of Theorem C}
Suppose there exists a complete Riemannian metric $g$ on $M$ such that $\scal_g>0$ everywhere.
Let $(E,\nabla^E)$ and $(F,\nabla^F)$ be the flat bundles on $M$ with typical fiber $C^\ast\Gamma$ constructed in Example~\ref{E:higher relative}.
Since $Y$ satisfies Condition~\eqref{C:relative higher index}, we have
\begin{equation}\label{E:HLN2}
    \relind(M;E,F)\neq 0.
\end{equation}
In order to obtain a contradiction, we will construct a generalized Gromov-Lawson operator on $M$.

Since the metric $g$ is complete, the function $\rho=1$ is admissible.
Let $K$ be an essential support of $(E,F)$ and let $\phi\colon M\to [0,\infty)$ be a smooth function such that $\phi=0$ in a neighborhood of $K$ and $\phi=1$ in a neighborhood of infinity.
Observe that, for $\lambda>0$, $\lambda\phi$ is a compatible potential.
Denote by $\Pop_\lambda$ the generalized Gromov-Lawson operator $\Pop_{1,\lambda\phi}$.
Let $\Phi_{\lambda}\colon M\to\RR$ be the smooth function defined by the formula
\begin{equation}\label{E:Phi_lambda}
    \Phi_\lambda(x)\defeq \frac{1}{4}\scal_g(x)+\lambda^2\phi^2(x)-\lambda\abs{\dd\phi_x},\qquad x\in M.
\end{equation}
Since the connections $\nabla^E$ and $\nabla^F$ are flat, from Lemma~\ref{L:Fredholm} and the Lichnerowicz formula~\eqref{E:Lichnerowicz} we deduce
\begin{equation}\label{T:HLN1}
    \bigl<\Pop^2_\lambda w,w\bigr>\geq\bigl<\Phi_\lambda w,w\bigr>,\qquad w\in\Gamma_\rm c(M;W).
\end{equation}
In order to prove the thesis, we will show that the function $\Phi_\lambda$ is uniformly positive for $\lambda$ small enough.
In fact, in this case~\eqref{T:HLN1} and Theorem~\ref{T:index} imply $\relind(M;E,F)=0$, contradicting~\eqref{E:HLN2}.

Let $\Omega_0$ and $\Omega_1$ be open subsets of $M$ such that $M=\Omega_0\cup\Omega_1$, $\Omega_0$ is relatively compact, and $\phi=1$ on $\Omega_1$.
Since $\Omega_0$ is relatively compact, there exists a constant $\sigma>0$ such that $\scal_g\geq\sigma$ on $\Omega_0$.
Choose $\lambda$ satisfying
\begin{equation}\label{T:HLN3}
    0<\lambda<\frac{\sigma}{4\norm{\dd\phi}_\infty}.
\end{equation}
With this choice, we have
\begin{equation}
    \Phi_\lambda(x)\geq \frac{\sigma}{4}-\lambda\norm{\dd\phi}_\infty>0,\qquad x\in \Omega_0.
\end{equation}
Since $\phi=1$ on $\Omega_1$, we also have
\begin{equation}
    \Phi_\lambda(x)\geq \lambda^2,\qquad x\in \Omega_1.
\end{equation}
Therefore, when $\lambda$ satisfies~\eqref{T:HLN3}, the function $\Phi_\lambda$ is uniformly positive.\qed


\section{Estimates on band widths}\label{S:BW}
This last section is devoted to the proof of Theorem~\ref{T:sharp band width}.
In Subsection~\ref{SS:rescaled Callias}, using rescaling functions and potentials in a similar fashion as in Subsection~\ref{S:GL}, we extend the theory of Callias-type operators to Riemannian manifolds which are not necessarily complete.
More precisely, we develop a rescaled version of the real Callias-type operators used by Zeidler in~\cite{Zei19}.
In Subsection~\ref{SS:Callias on spin bands}, we focus on compact Riemannian spin bands and prove a Callias-type index theorem, stating that the index of a Callias-type operator on a compact Riemannian spin band coincides with the index of an elliptic differential operator on a separating hypersurface.
Finally, in Subsection~\ref{SS:Callias vanishing} we prove a vanishing theorem yielding Theorem~\ref{T:sharp band width}.


\subsection{Generalized Callias-type operators}\label{SS:rescaled Callias}
Let $(M,g)$ be an $n$-dimensional Riemannian spin manifold.
Let $A$ be a Real unital $C^\ast$-algebra and let $\bigl(E,\nabla^E\bigr)$ be a bundle of finitely generated projective Hilbert $A$-modules endowed with a metric connection.
Let $\spinor_M$ be the associated $\cl_{n,0}$-linear spinor bundle with Dirac operator $\spind_M$.
Denote by $\Zop\colon\Gamma(M;\spinor_M\hat\tensor E)\to\Gamma(M;\spinor_M\hat\tensor E)$ the operator $\spind_M$ twisted with the bundle $E$.


\begin{definition}
We say that a smooth function $\psi\colon M\to\RR$ is a \emph{Callias potential} if there exist a compact subset $K\subset M$ and a constant $c>0$ such that $\psi^2-\abs{\dd\psi}>c$ on $M\setminus K$.
\end{definition}

Fix an admissible rescaling function $\rho$ and a Callias potential $\psi$.
Let $\Zop_\rho$ be the operator $\Zop$ rescaled with the function $\rho$ defined by Formula~\eqref{E:rescaled operator}.
The \emph{generalized Callias-type operator} associated to these data is the first order elliptic differential operator
$\Bop_{\rho,\psi}\colon\Gamma(M;\spinor_M\hat\tensor E\hat\tensor\cl_{0,1})\to \Gamma(M;\spinor_M\hat\tensor E\hat\tensor\cl_{0,1})$ defined as
\begin{equation}\label{E:gen Callias}
    \Bop_{\rho,\psi}\defeq \Zop_\rho\hat\tensor 1+\psi\hat\tensor\epsilon,
\end{equation}
where $\epsilon$ denotes left-multiplication by the Clifford generator of $\cl_{0,1}$.


\begin{remark}
When the metric $g$ is complete, $\rho=1$ is admissible.
If the Callias potential $\psi$ is a proper map with bounded gradient, $\Bop_{1,\psi}$ coincides with the operator used in~\cite{Zei19}.
\end{remark}


We now show that the operator $\Bop_{\rho,\psi}$ has a well-defined index.


\begin{theorem}\label{T:Callias Fredholm}
For every admissible rescaling function $\rho$ and every Callias potential $\psi$, the pair $(M,\Bop_{\rho,\psi})$ is complete and the operator $\Bop_{\rho,\psi}^2$ is uniformly positive at infinity.
\end{theorem}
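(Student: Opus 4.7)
The plan is to verify the two claims separately: completeness of $(M,\Bop_{\rho,\psi})$, and uniform positivity of $\Bop_{\rho,\psi}^2$ at infinity.

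First I would establish completeness. The twisted Dirac operator $\Zop$ satisfies the symbol bound~\eqref{E:uniformly bounded symbol}, since its commutator with a scalar function is Clifford multiplication by the differential. Hence Proposition~\ref{P:adm->complete} applied to the admissible rescaling $\rho$ furnishes a coercive function $h$ such that $[\Zop_\rho,h]$ is bounded. Tensoring trivially with $\cl_{0,1}$ gives $[\Zop_\rho\hat\tensor 1,h]=[\Zop_\rho,h]\hat\tensor 1$, which is bounded with the same norm; thus $\bigl(M,\Zop_\rho\hat\tensor 1\bigr)$ is complete with the same $h$. Finally, $\Bop_{\rho,\psi}=\Zop_\rho\hat\tensor 1+\psi\hat\tensor\epsilon$ differs from $\Zop_\rho\hat\tensor 1$ by a zeroth-order fiberwise self-adjoint bundle map, so Remark~\ref{R:completeness depends on symbol} yields completeness of $(M,\Bop_{\rho,\psi})$.

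Second, I would compute $\Bop_{\rho,\psi}^2$ explicitly. Setting $A\defeq\Zop_\rho\hat\tensor 1$ and $B\defeq\psi\hat\tensor\epsilon$, both operators are odd: $\Zop_\rho$ is odd for the grading on $\spinor_M$, while $\epsilon$ is the odd generator of $\cl_{0,1}$ and $\psi$ is an even scalar multiplier. The Koszul sign rule for graded tensor products then produces
\[
    \Bop_{\rho,\psi}^2=A^2+AB+BA+B^2=\Zop_\rho^2\hat\tensor 1+[\Zop_\rho,\psi]\hat\tensor\epsilon+\psi^2\hat\tensor 1,
\]
where the middle term arises because $(\psi\hat\tensor\epsilon)(\Zop_\rho\hat\tensor 1)=-\psi\Zop_\rho\hat\tensor\epsilon$ in the graded convention. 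By~\eqref{E:rescaled symbol} and~\eqref{E:uniformly bounded symbol}, the bundle endomorphism $[\Zop_\rho,\psi]=\rho^2[\Zop,\psi]$ satisfies the pointwise norm bound $\bigl\|[\Zop_\rho,\psi]_x\bigr\|\leq\rho^2(x)\abs{\dd\psi_x}$, and the graded-adjoint rule $(a\hat\tensor b)^\ast=(-1)^{|a||b|}a^\ast\hat\tensor b^\ast$ shows that $[\Zop_\rho,\psi]\hat\tensor\epsilon$ is fiberwise self-adjoint (the anti-self-adjointness of Clifford multiplication by $\dd\psi$ in $\cl_{n,0}$ being cancelled by the odd-odd Koszul sign). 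Discarding the non-negative term $\Zop_\rho^2\hat\tensor 1$ and applying the standard Hilbert $C^\ast$-module inequality $T\geq-\|T\|\cdot\id$ for self-adjoint $T$, I obtain
\[
    \bigl<\Bop_{\rho,\psi}^2 w,w\bigr>\geq\bigl<\bigl(\psi^2-\rho^2\abs{\dd\psi}\bigr)w,w\bigr>
\]
for every $w\in\Gamma_\rm c(M;\spinor_M\hat\tensor E\hat\tensor\cl_{0,1})$. Since $\rho\leq 1$ by Definition~\ref{D:admissible rescaling}, the right-hand side dominates $\psi^2-\abs{\dd\psi}$, which exceeds a positive constant outside a compact set by the Callias condition, establishing~\eqref{E:invertible at infty}.

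The main subtlety is the bookkeeping of graded-tensor-product signs: one must verify that the cross term $AB+BA$ collapses to the commutator $[\Zop_\rho,\psi]\hat\tensor\epsilon$ rather than an anticommutator, and that this term is genuinely self-adjoint even though $c(\dd\psi)$ alone is anti-self-adjoint. Once this accounting is done, both assertions follow from the admissibility of $\rho$ (which contributes the bound $\rho\leq 1$ and Proposition~\ref{P:adm->complete}) and the Callias hypothesis on $\psi$.
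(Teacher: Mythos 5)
Your proposal is correct and follows essentially the same route as the paper: completeness via Proposition~\ref{P:adm->complete} together with Remark~\ref{R:completeness depends on symbol}, then the identity $\Bop_{\rho,\psi}^2=\Zop_\rho^2\hat\tensor 1+\rho^2\cc(\dd\psi)\hat\tensor\epsilon+\psi^2$ and the bound $\rho\leq 1$ to get $\bigl<\Bop_{\rho,\psi}^2w,w\bigr>\geq\bigl<\bigl(\psi^2-\abs{\dd\psi}\bigr)w,w\bigr>$, which is uniformly positive outside a compact set by the Callias condition. The only difference is that you spell out the graded-tensor sign bookkeeping and the fiberwise norm estimate that the paper leaves implicit.
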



\begin{proof}
The completeness of the pair $(M,\Bop_{\rho,\psi})$ follows from Proposition~\ref{P:adm->complete} and Remark~\ref{R:completeness depends on symbol}.
Moreover, we have
\begin{equation}\label{E:Callias square}
    \Bop_{\rho,\psi}^2=\Zop_\rho^2\hat\tensor 1+\rho^2\cc(\dd\psi)\hat\tensor\epsilon+\psi^2.
\end{equation}
Since $\rho\leq1$, from the previous identity we deduce
\begin{equation}\label{E:Callias vanishing}
    \left<\Bop_{\rho,\psi}^2w,w\right>\geq \left<\left(\psi^2-\abs{\dd\psi}\right)w,w\right>,\qquad w\in\Gamma_c(M;\spinor_M\hat\tensor E\hat\tensor\cl_{0,1}).
\end{equation}
Since $\psi$ is a Callias potential, the last inequality implies that $\Bop_{\rho,\psi}^2$ is uniformly positive at infinity.
\end{proof}


From Theorem~\ref{T:Callias Fredholm} and the results of Subsection~\ref{SS:relative completeness}, the class $\ind\left(\Bop_{\rho,\psi}\right)$ in $\KO_n(A)$ is well defined, for every admissible rescaling function $\rho$ and every Callias potential $\psi$.


\begin{remark}\label{E:Zeidler}
When $(M,g)$ is complete, $\rho=1$ and the Callias potential $\psi$ is a proper function with uniformly bounded gradient, the index of $\Bop_{1,\psi}$ coincides with the class $\ind_\rm{PM}\left(\spind_{M,E},\psi\right)$ used in~\cite{Zei19}.
\end{remark}


We conclude this subsection with some stability properties of the index of $\Bop_{\rho,\phi}$.


\begin{proposition}\label{P:Callias stability}
Suppose $\rho$ is an admissible rescaling function and $\psi$ is a Callias potential.
Then
\begin{enumerate}[label=$(\alph*)$]
    \item if $\rho^\prime$ is a second admissible rescaling function coinciding with $\rho$ outside of a compact set, then the indices of $\Bop_{\rho,\psi}$ and $\Bop_{\rho^\prime,\psi}$ coincide;
    \item if $\psi^\prime$ is a second Callias potential coinciding with $\psi$ outside of a compact set, then the indices of $\Bop_{\rho,\psi}$ and $\Bop_{\rho,\psi^\prime}$ coincide;
    \item if $\psi$ is constant and nonzero outside of a compact set, then the index of $\Bop_{\rho,\psi}$ vanishes.
\end{enumerate}
\end{proposition}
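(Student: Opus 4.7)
The plan is to imitate the proof of Lemma~\ref{L:stability}. For parts~(a) and~(b) one interpolates linearly and checks that each operator along the straight-line homotopy meets the hypotheses of Theorem~\ref{T:Callias Fredholm}, so that the resulting bounded transforms form a norm-continuous path of Fredholm operators in $\mathcal{L}_A$, hence a constant-index family. For part~(c) the reduction via~(b) to a constant potential exposes a Lichnerowicz-type spectral gap.

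For~(a), set $\rho_t\defeq t\rho'+(1-t)\rho$ for $t\in[0,1]$. Since $\rho,\rho'\in(0,1]$ and $\rho=\rho'$ outside a compact set $K$, the convex combination $\rho_t$ also takes values in $(0,1]$ and satisfies $\rho_t\equiv\rho$ on $M\setminus K$, so Remark~\ref{R:coinciding at infty} ensures $\rho_t$ is admissible. The estimate~\eqref{E:Callias vanishing} in the proof of Theorem~\ref{T:Callias Fredholm} depends only on $\rho_t\le 1$ and on $\psi$, and therefore $\Bop_{\rho_t,\psi}^2$ is uniformly positive at infinity uniformly in $t$; each $\Bop_{\rho_t,\psi}$ thus has a well-defined index, and the family $t\mapsto \Bop_{\rho_t,\psi}\bigl(\Bop_{\rho_t,\psi}^2+1\bigr)^{-1/2}$ is norm-continuous, so the index is constant.

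For~(b), set $\psi_t\defeq t\psi'+(1-t)\psi$. Outside the compact set on which $\psi$ and $\psi'$ coincide, $\psi_t\equiv\psi$, hence $\psi_t^2-\abs{\dd\psi_t}>c$ off a compact set and $\psi_t$ is a Callias potential for every $t$. The same homotopy argument, applied to the family of bounded transforms $\Bop_{\rho,\psi_t}\bigl(\Bop_{\rho,\psi_t}^2+1\bigr)^{-1/2}$, yields the equality of indices.

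For~(c), let $\lambda\neq 0$ be the constant value of $\psi$ outside a compact set. The constant function $\tilde\psi\equiv\lambda$ is itself a Callias potential, with $\tilde\psi^2-\abs{\dd\tilde\psi}=\lambda^2>0$ on all of $M$, and it agrees with $\psi$ outside a compact set. By part~(b), $\ind\bigl(\Bop_{\rho,\psi}\bigr)=\ind\bigl(\Bop_{\rho,\tilde\psi}\bigr)$. Since $\dd\tilde\psi=0$, Identity~\eqref{E:Callias square} simplifies to $\Bop_{\rho,\tilde\psi}^2=\Zop_\rho^2\hat\tensor 1+\lambda^2\geq\lambda^2>0$, so $\Bop_{\rho,\tilde\psi}$ is invertible as a self-adjoint regular operator, its bounded transform is invertible in $\mathcal{L}_A$, and the index is zero. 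I expect the only mildly technical point to be the norm-continuity of the bounded transforms in~(a) and~(b): the difference $\Bop_{\rho_t,\psi}-\Bop_{\rho_s,\psi}$ is a first-order operator whose coefficients vary smoothly in $t$ and are supported in a fixed compact set, and combining this with the resolvent estimates of Lemma~\ref{P:invertible at infinity} (as used in the proof of Theorem~\ref{T:relative}) gives continuity in operator norm; the same remark applies to~(b).
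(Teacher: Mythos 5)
Your proposal matches the paper's argument: parts (a) and (b) are proved there by exactly the same linear homotopies $\rho_t=t\rho'+(1-t)\rho$ and $\psi_t=t\psi'+(1-t)\psi$, arguing as in Lemma~\ref{L:stability} that the bounded transforms form a continuous path of Fredholm operators, and part (c) is proved by replacing $\psi$ with the constant nonzero potential and invoking Identity~\eqref{E:Callias square} for uniform positivity together with part (b). Your extra remarks on admissibility of $\rho_t$, the Callias condition for $\psi_t$, and norm-continuity are just the details the paper leaves implicit, so the proof is correct and essentially identical to the paper's.
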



\begin{proof}
For Parts~(a) and~(b), it suffices to consider the linear homotopies $\rho_t\defeq t\rho^\prime+(1-t)\rho$ and $\psi_t=t\psi+(1-t)\psi$, with $0\leq t\leq1$, and argue as in the proof of Lemma~\ref{L:stability}.
Let us prove Part~(c).
Let $\psi_0$ be a constant nonzero function such that $\psi=\psi_0$ outside of a compact set.
Observe that $\psi_0$ is a Callias potential.
By Identity~\eqref{E:Callias square}, the operator $\Bop_{\rho,\psi_0}^2$ is uniformly positive and the index of $\Bop_{\rho,\psi_0}$ vanishes.
Using Part~(b), we deduce that the index of $\Bop_{\rho,\psi}$ vanishes as well. 
\end{proof}


\subsection{Callias-type operators on Riemannian spin bands}\label{SS:Callias on spin bands}
We start with recalling the notion of band due to Gromov~\cite[Section~2]{Gro18}.
A \emph{band} is a manifold $V$ with two distinguished subsets $\partial_\pm V$ of the boundary $\partial V$. 
It is called \emph{proper} if each $\partial_\pm V$ is a union of connected
components of the boundary and $\partial V=\partial_-V\sqcup\partial_+V$. 
If $V$ is a Riemannian
manifold, we define the width of $V$ as $\width(V)\defeq\dist(\partial_-V,\partial_+V)$.

In order to define a generalized Callias-type operator in this setting, we proceed as in Subsection~\ref{SS:ALNP} and consider the open manifold $V^\rm o\defeq V\setminus \partial V$.
The metric $g$ induces an incomplete metric on $V^\rm o$, that we denote by the same symbol.
Given a collar neighborhood $U_+\subset V$ of $\partial_+V$, we say that $U_+^\rm o\defeq U_+\setminus\partial_+V\subset V^\rm o$ is a neighborhood of the positive boundary at infinity of $V^\rm o$.
In a similar way, define the notion of a neighborhood of the negative boundary at infinity of $V^\rm o$.


\begin{definition}
A Callias potential $\psi$ on $(V^\rm o,g)$ is called \emph{band compatible} if there exist constants $\lambda_-$ and $\lambda_+$, with $\lambda_-<0<\lambda_+$, such that the image of $\psi$ is contained in $[\lambda_-,\lambda_+]$, $\psi=\lambda_-$ in a neighborhood of the negative boundary at infinity of $V^\rm o$ and $\psi=\lambda_+$ in a neighborhood of the positive boundary at infinity of $V^\rm o$.
\end{definition}


\noindent Let $\bigl(E,\nabla^E\bigr)$ be a bundle of finitely generated projective Real Hilbert $A$-modules with inner product and metric connection.
We now study the properties of the generalized Callias-type operator $\Bop_{\rho,\psi}$, where $\rho$ is an admissible rescaling function and $\psi$ is a band compatible Callias potential.


\begin{lemma}\label{L:Properties Callias boundary}
Let $\rho$ be an admissible rescaling function.
Suppose $\psi_1$ and $\psi_2$ are two band compatible Callias potentials.
Then the indices of $\Bop_{\rho,\psi_1}$ and $\Bop_{\rho,\psi_2}$ coincide.
\end{lemma}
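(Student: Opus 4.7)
The plan is to connect $\psi_1$ to $\psi_2$ through a straight-line homotopy of band compatible Callias potentials and then invoke homotopy invariance of the Fredholm index, in analogy with parts~(a) and~(b) of Proposition~\ref{P:Callias stability}.

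For each $j\in\{1,2\}$, let $\lambda_\pm^{(j)}$ be the constants furnished by the band compatibility of $\psi_j$. By intersecting the neighborhoods on which $\psi_1$ and $\psi_2$ are individually constant, I can choose a single pair of neighborhoods $U_\pm$ of the positive and negative boundaries at infinity on which both $\psi_1$ and $\psi_2$ are constant. Setting $\psi_t\defeq(1-t)\psi_1+t\psi_2$ for $t\in[0,1]$, on $U_+$ the function $\psi_t$ takes the constant value $(1-t)\lambda_+^{(1)}+t\lambda_+^{(2)}$, which is a convex combination of two strictly positive numbers and hence is bounded below by $\min(\lambda_+^{(1)},\lambda_+^{(2)})>0$; the symmetric statement holds on $U_-$. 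Consequently there is a constant $\delta>0$, independent of $t$, such that $\psi_t^2-\abs{\dd\psi_t}=\psi_t^2\geq\delta^2$ on $U_+\cup U_-$, the complement of which is compact in $V^\rm o$. Thus each $\psi_t$ is a band compatible Callias potential, with Callias constant uniform in~$t$.

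Since both $\psi_1$ and $\psi_2$ have bounded image, $\psi_2-\psi_1$ is a bounded function, so $(\psi_2-\psi_1)\hat\tensor\epsilon$ is an adjointable bounded operator, and the family $\{\Bop_{\rho,\psi_t}\}_{t\in[0,1]}=\{\Bop_{\rho,\psi_1}+t(\psi_2-\psi_1)\hat\tensor\epsilon\}$ is a norm-continuous family of self-adjoint regular operators on $L^2\bigl(V^\rm o;\spinor_{V^\rm o}\hat\tensor E\hat\tensor\cl_{0,1}\bigr)$, by Theorem~\ref{T:Callias Fredholm} and Remark~\ref{R:completeness depends on symbol}. The uniform-in-$t$ lower bound on $\psi_t^2-\abs{\dd\psi_t}$ at infinity, combined with the computation in the proof of Theorem~\ref{T:Callias Fredholm} (in particular Inequality~\eqref{E:Callias vanishing}), shows that $\Bop_{\rho,\psi_t}^2$ is uniformly positive at infinity with a bound independent of $t$. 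Therefore the bounded transforms $\Bop_{\rho,\psi_t}\bigl(\Bop_{\rho,\psi_t}^2+1\bigr)^{-1/2}$ form a norm-continuous path of Fredholm operators, whose index is constant in $t$.

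The main technical point — and the only real subtlety — is verifying norm-continuity of the bounded transform along the path. This reduces to standard resolvent estimates for self-adjoint regular operators on Hilbert $C^\ast$-modules (as used in Subsection~\ref{SS:relative completeness}), and it becomes routine once the uniform positivity at infinity is in place. The geometric content of the argument is thus really contained in the observation that same-sign convex combinations at infinity cannot degenerate to zero, so the Callias condition survives along the straight-line homotopy.
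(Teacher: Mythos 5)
Your proof is correct and follows essentially the same route as the paper: the paper also takes the straight-line homotopy $\psi_t$ between the two potentials, observes that each $\psi_t$ is a band compatible Callias potential, and concludes by the same continuity-of-the-index argument as in Lemma~\ref{L:stability}. Your additional remarks (uniform positivity at infinity along the homotopy and norm-continuity of the bounded transforms) just spell out what the paper leaves implicit in that reference.
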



\begin{proof}
Consider the linear homotopy $\psi_t=t\psi_1+(1-t)\psi_2$, with $0\leq t\leq1$.
Since $\psi_1$ and $\psi_2$ are band compatible, $\psi_t$ is a band compatible Callias potential for all $t\in[0,1]$.
The thesis follows by arguing as in the proof of Lemma~\ref{L:stability}.
\end{proof}


\begin{theorem}\label{T:Callias index}
Let $\rho$ be an admissible rescaling function and let $\psi\colon V^\rm o\to[\lambda_-,\lambda_+]$ be a band compatible Callias potential.
If $a\in(\lambda_-,\lambda_+)$ is a regular value of $\psi$, then
\[
    \ind \left(\Bop_{\rho,\phi}\right)
    =\ind\left(\spind_{\phi^{-1}(a),E|_{\phi^{-1}(a)}}\right)
    \in\KO_{n-1}(A).
\]
\end{theorem}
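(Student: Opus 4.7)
The plan is to reduce the computation of $\ind(\Bop_{\rho,\psi})$ to a standard Callias-type index on a complete cylindrical model via the stability properties of the rescaled Callias index (Lemma~\ref{L:Properties Callias boundary} and Proposition~\ref{P:Callias stability}) combined with the $K$-theoretic additivity of Theorem~\ref{T:relative}.

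First, replacing $\psi$ by $\psi - a$, which is again a band compatible Callias potential with end values $\lambda_- - a < 0 < \lambda_+ - a$, I reduce to the case $a = 0$, so that $N = \psi^{-1}(0)$. Homotopies of $\psi$ through band compatible Callias potentials (Lemma~\ref{L:Properties Callias boundary}) and of $\rho$ through admissible rescaling functions that agree at infinity (Proposition~\ref{P:Callias stability}(a)) preserve the index. Using this freedom I arrange that on a tubular neighborhood $U \cong N \times (-\epsilon,\epsilon)$ of $N$ the data are in product form: $\rho \equiv 1$, $\psi(n,t) = t$, and the Riemannian metric is a product $g|_N + dt^2$. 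The last normalization on the metric uses an auxiliary cut-and-paste (Theorem~\ref{T:relative}) to replace $U$ by a product tube, which preserves the index since the replacement is performed relative to a common boundary neighborhood.

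Next, I introduce the complete cylinder $\mathcal{C} \defeq N \times \RR$ equipped with the product metric, the pullback of $(E|_N, \nabla^E|_N)$, the trivial admissible rescaling function $\rho \equiv 1$, and a band compatible Callias potential $\tilde\psi(n,t) \defeq \chi(t)$, where $\chi$ is a smooth monotone function equal to $t$ for $|t| \leq \epsilon/2$ and constant of appropriate nonzero sign for $|t| \geq \epsilon$. By Theorem~\ref{T:Callias Fredholm}, $\Bop^{\mathcal{C}}_{1,\tilde\psi}$ has index in $\KO_n(A)$. I apply Theorem~\ref{T:relative} to the pair $(V^\rm o, \Bop_{\rho,\psi})$ and $(\mathcal{C}, \Bop^{\mathcal{C}}_{1,\tilde\psi})$, cutting along a hypersurface $N' \subset U$ chosen inside the product tube; Assumption~\ref{A:cut-and-paste} is satisfied by the normalization in the first step. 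After a compactly supported homotopy of the Callias potentials on the resulting hybrids $M_3$ and $M_4$ through Callias potentials to nonvanishing constants, Proposition~\ref{P:Callias stability}(c) gives $\ind(\Bop^{M_3}) = \ind(\Bop^{M_4}) = 0$. Thus Theorem~\ref{T:relative} yields
\[
    \ind\bigl(\Bop_{\rho,\psi}^{V^\rm o}\bigr) = \ind\bigl(\Bop^{\mathcal{C}}_{1,\tilde\psi}\bigr).
\]

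Finally, on the complete product $\mathcal{C}$ the Callias operator factorizes along the $\RR$-factor. A further homotopy of $\tilde\psi$ through Callias potentials to the linear proper potential $(n,t) \mapsto t$ places us in the framework of Remark~\ref{E:Zeidler}, and Zeidler's partitioned-manifold Callias index theorem~\cite[Theorem~1.2]{Zei19} identifies this with $\ind\bigl(\spind_{N, E|_N}\bigr) \in \KO_{n-1}(A)$, completing the argument. The principal technical obstacle is the construction of the hybrids $M_3, M_4$ in Step 3 with vanishing Callias index: the naive cut along $N$ itself leaves the zero level set of the potential on each hybrid, so the correct arrangement must exchange matched ``sides'' of $V^\rm o$ and $\mathcal{C}$ in such a way that the resulting potentials are homotopic through Callias potentials to nonvanishing constants. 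The verification of this step requires a careful choice of the cut locus inside the product tube and of the band compatible extensions of the potential on both sides.
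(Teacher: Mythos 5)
Your toolkit is the same as the paper's (the additivity formula of Theorem~\ref{T:relative}, the stability results of Proposition~\ref{P:Callias stability} and Lemma~\ref{L:Properties Callias boundary}, and Zeidler's partitioned-manifold theorem via Remark~\ref{E:Zeidler}), but the decomposition you choose is where the argument fails, and the step you yourself flag as the ``principal technical obstacle'' is precisely the missing proof, not a deferrable technicality. With the cut taken inside the product tube around $N=\psi^{-1}(a)$ and with $\tilde\psi$ matching $\psi$ across the cut (increasing through $0$), each hybrid $M_3=V_-\cup_{N'}\mathcal{C}_+$ and $M_4=\mathcal{C}_-\cup_{N'}V_+$ still carries a potential that crosses $0$ transversally along a copy of $N$ and tends to constants of \emph{opposite} signs at its two ends; such a potential cannot be deformed to a nonvanishing constant by any compactly supported homotopy through Callias potentials, and there is no reason for the hybrid indices to vanish --- each hybrid is exactly the kind of operator the theorem computes, with index $\ind\bigl(\spind_{N,E|_N}\bigr)$, so Theorem~\ref{T:relative} degenerates to a tautology. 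If instead you reverse the direction of $\tilde\psi$ on $\mathcal{C}$ so that the hybrid potentials become one-signed (the only arrangement in which their indices do vanish, by pushing the potential off zero on a compact set and then homotoping to a constant), additivity yields $\ind\bigl(\Bop_{\rho,\psi}\bigr)=-\ind\bigl(\Bop^{\mathcal{C}}_{1,\tilde\psi}\bigr)$, and you would then also have to prove the sign rule for a Callias operator under reversal of its potential; none of this is carried out. A further unaddressed point is your closing homotopy from $\tilde\psi$ (constant near infinity) to the proper linear potential $t$: this is not covered by Proposition~\ref{P:Callias stability}(b) or Lemma~\ref{L:Properties Callias boundary}, since the two potentials do not coincide outside a compact set and differ by an unbounded function, so norm-continuity of the corresponding family of bounded transforms needs a separate argument.

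For contrast, the paper cuts where $\psi$ is already constant, namely along hypersurfaces $N_\pm$ inside collars of $\partial_\pm V$, and compares with the disjoint union of the orientation-reversed collar pieces capped off by half-cylinders carrying potentials that start at $\lambda_\pm$ and grow linearly; on that comparison manifold $\psi^2-\abs{\dd\psi}$ is uniformly positive, so its index vanishes outright, one hybrid is a double of the collar pieces with constant nonzero potential (index zero by Proposition~\ref{P:Callias stability}(c)), and the other hybrid is the complete manifold $Z_-\cup Y\cup Z_+$ containing the entire level set $\psi^{-1}(a)$ together with a potential that is automatically proper with uniformly bounded gradient, so Remark~\ref{E:Zeidler} and Zeidler's theorem apply to it directly. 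In that arrangement the zero set is never split between hybrids, and no sign or unbounded-homotopy issues arise. As written, your proposal leaves its decisive vanishing claim unproved (and, in the arrangement you describe, false), so the proof is incomplete.
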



\begin{proof}
Let $U\cong \partial V\times[0,1)$ be a collar neighborhood of $\partial V$ such that $\psi$ is constant on $U\setminus\partial V$.
Using Part~(a) of Proposition~\ref{P:Callias stability}, assume there exists a constant $\rho_0\in(0,1]$ such that $\rho=\rho_0$ on the complement of $U^\prime\cong \partial V\times[0,1/4)$.
Observe that the manifold $N\cong\partial V\times\{1/2\}$ is a closed separating hypersurface of $V^\rm o$.
Moreover, $N=N_+\sqcup N_-$, where $N_+\simeq \partial_+V$ and $N_-\cong\partial_-V$.
Therefore, we have the partition
\[
    V^\rm o=Y\cup_{N}W.
\]
Here, $Y$ is a compact manifold with $\partial Y=N_+\sqcup N_-$ and $W=W_-\sqcup W_+$, where $W_-$ is a neighborhood of the negative boundary at infinity with $\partial W_-\cong N_-$ and $W_+$ is a neighborhood of the positive boundary at infinity with $\partial W_+\cong N_+$.
Let us assume (after deformation near $N$) that our data respect the product structure of a tubular neighborhood of $N$ where the function $\psi$ is constant.
Consider the half-cylinders
\[
    Z_-\defeq (-\infty,0]\times N_-\qquad\text{and}\qquad Z_+\defeq [0,\infty)\times N_+.
\]
Observe that the bundles $E|_{N_\pm}$ extend to bundles $E_{Z_\pm}$ with metric connections on $Z_\pm$.
Consider the manifolds
\[
    (M_2)_-\defeq Z_-\cup_{N_-}W_-^-\qquad (M_2)_+\defeq W_+^-\cup_{N_+}Z_+\qquad M_2\defeq (M_2)_-\sqcup (M_2)_+
\]
where $W_-^-$ and $W_+^-$ denote respectively the manifolds $W_-$ and $W_+$ with opposite orientations. 
Let $g_2$ be the Riemannian metric coinciding with $g$ on $W_\pm^-$ and being a product on the half cylinders $Z_\pm$.
Let $E_2$ be the bundle with metric connection coinciding with $E$ on $W_-^-\sqcup W_+^-$ and with $E_{Z_\pm}$ respectively on $Z_\pm$.
Let $\rho_2^+\colon M_2^+\to(0,1]$ be the smooth function coinciding with $\rho$ on $W_+^-$ and with $\rho_0$ on $Z_+$.
Observe that $\rho_2^+$ is admissible, since the metric $g_2$ is complete on the cylindrical end $Z_+$.
Finally, let $\mu_+$ be a positive constant such that $\lambda_+^2>\mu_+$.
Then there exists a smooth function $\psi_2^+\colon (M_2)_+\to[\lambda_+,\infty)$ such that 
\begin{enumerate}[label=(\roman*)]
    \item $\psi_2^+=\lambda_+$ on a neighborhood of $W_+^-$;
    \item $\psi_2^+(t,x)\geq\mu_+t$ for all $(t,x)\in [0,\infty)\times N_+\subset Z_+$;
    \item $\abs{\dd\psi_2^+}\leq\mu_+$.
\end{enumerate}
By Properties~(i) and~(ii), $\psi_2^+$ is a Callias potential.
Let $\Bop_{\rho_2^+,\psi_2^+}$ be the associated generalized Callias-type operator.
Since $\rho_2^+\leq 1$ and $\bigl(\psi_2^+\bigr)^2\geq (\lambda_+)^2>\mu_+$, Properties~(i)--(iii) imply that the function $\bigl(\psi_2^+\bigr)^2-\abs{\dd\psi_2^+}$ is uniformly positive on $(M_2)_+$.
By Inequality~\eqref{E:Callias vanishing}, we deduce that the index class of $\Bop_{\rho_2^+,\psi_2^+}$ vanishes.
Finally, observe that $\psi_2^+$ is proper with uniformly bounded gradient on the cylindrical end $Z_+$.

In a similar way, construct an admissible rescaling function $\rho_2^-$ and a Callias potential $\psi_2^-\colon (M_2)_-\to(-\infty,\lambda_-]$ such that the index of the associated generalized Callias-type operator $\Bop_{\rho_2^-,\psi_2^-}$ vanishes and the function $\psi_2^-$ is proper with uniformly bounded gradient on the cylindrical end $Z_-$.
Finally, observe that $\rho_2^\pm$ induce an admissible rescaling function $\rho_2$ on $M_2$ and $\psi_2^\pm$ induce a Callias potential $\psi_2$ on $M_2$.
Since $M_2=M_2^-\sqcup M_2^+$, the index of the associated operator $\Bop_{\rho_2,\psi_2}$ vanishes.

Observe that the manifolds $V^\rm o$ and $M_2$ satisfy Assumption~\ref{A:cut-and-paste}.
Using the cut-and-paste construction described in Subsection~\ref{SS:cut-and-paste}, we obtain Riemannian manifolds
\[
    M_3\defeq Z_-\cup_{N_-}Y\cup_{N_+}Z_+\qquad\text{and}\qquad
    M_4\defeq W\cup_NW^-
\]
and generalized Callias-type operators $\Bop_{\rho_3,\psi_3}$ and $\Bop_{\rho_4,\psi_4}$ respectively on $M_3$ and $M_4$.
Notice that the potential $\psi_4$ is constant and nonzero on both connected components of $\Bop_{\rho_4,\psi_4}$.
Therefore, the index of $\Bop_{\rho_4,\psi_4}$ vanishes by Part~(c) of Proposition~\ref{P:Callias stability}.

Let us now analyze the operator $\Bop_{\rho_3,\psi_3}$.
By construction, $\psi_3$ is a proper smooth function whose gradient is uniformly bounded.
Moreover, since $a\in (\lambda_-,\lambda_+)$, $\psi_3^{-1}(a)=\psi^{-1}(a)$.
Using Remark~\ref{E:Zeidler} and~\cite[Theorem~A.1]{Zei19}, the indices of $\Bop_{\rho_3,\psi_3}$ and $\spind_{\phi^{-1}(a),E|_{\phi^{-1}(a)}}$ coincide.
Therefore, using Theorem~\ref{T:relative}, we obtain
\begin{multline*}
    \ind\left(\Bop_{\rho,\psi}\right)=\ind\left(\Bop_{\rho,\psi}\right)+\ind\left(\Bop_{\rho_2,\psi_2}\right)\\
    =\ind\left(\Bop_{\rho_3,\psi_3}\right)+\ind\left(\Bop_{\rho_4,\psi_4}\right)
    =\ind\left(\spind_{\phi^{-1}(a),E|_{\phi^{-1}(a)}}\right),
\end{multline*}
which concludes the proof.
\end{proof}


We finally specialize to the case when $(V,g)$ is a Riemannian band over a closed spin manifold $N$, i.e. $V$ is diffeomorphic to $N\times[-1,1]$.
Let $\bigl(\mathcal{L}_V,\nabla^{\mathcal{L}_V}\bigr)$ be the Mishchenko bundle of $V$ endowed with the canonical flat connection.
For an admissible rescaling function $\rho$ and a band compatible Callias potential $\psi$, denote by $\Bop_{\rho,\psi}^{\mathcal{L}_V}$ the generalized Callias-type operator associated to these data.


\begin{corollary}\label{C:Callias Mishchenko}
Let $N$ be a closed $(n-1)$-dimensional spin manifold with fundamental group $\Gamma$.
Let $(V,g)$ be a Riemannian spin band over $N$.
For an admissible rescaling function $\rho$ and a band compatible Callias potential $\psi$, we have
\begin{equation}\label{E:Callias mishchenko}
    \ind\left(\Bop_{\rho,\psi}^{\mathcal{L}_V}\right)=\alpha(N)    \in\KO_{n-1}(C^\ast\Gamma).
\end{equation}
\end{corollary}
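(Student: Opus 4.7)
The plan is to reduce the computation to Theorem~\ref{T:Callias index}, applied to a carefully chosen band compatible Callias potential whose zero set is a standard copy of $N$ inside $V$.

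First, I will use Lemma~\ref{L:Properties Callias boundary} to replace the given $\psi$ by a more convenient one. Fix a band diffeomorphism $\Phi\colon N\times[-1,1]\to V$ identifying $N\times\{\pm1\}$ with $\partial_\pm V$, and let $h\colon[-1,1]\to[\lambda_-,\lambda_+]$ be a smooth function with $h\equiv\lambda_-$ on a neighborhood of $-1$, $h\equiv\lambda_+$ on a neighborhood of $+1$, $h(0)=0$, and $h'(0)>0$. Define $\tilde\psi\defeq h\circ\pr_2\circ\Phi^{-1}$ on $V^\rm o$. Then $\tilde\psi$ is a band compatible Callias potential (since $\abs{\dd\tilde\psi}$ vanishes near the boundary, where $\tilde\psi^2=\lambda_\pm^2>0$), $0$ is a regular value, and $\tilde\psi^{-1}(0)=\Phi(N\times\{0\})$ is canonically diffeomorphic to $N$. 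By Lemma~\ref{L:Properties Callias boundary},
\[
    \ind\bigl(\Bop_{\rho,\psi}^{\mathcal{L}_V}\bigr)=\ind\bigl(\Bop_{\rho,\tilde\psi}^{\mathcal{L}_V}\bigr).
\]

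Second, I would invoke Theorem~\ref{T:Callias index} with $E=\mathcal{L}_V$ and $a=0$, obtaining
\[
    \ind\bigl(\Bop_{\rho,\tilde\psi}^{\mathcal{L}_V}\bigr)=\ind\bigl(\spind_{\tilde\psi^{-1}(0),\,\mathcal{L}_V|_{\tilde\psi^{-1}(0)}}\bigr)\in\KO_{n-1}(C^\ast\Gamma).
\]

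Third, I would identify $\mathcal{L}_V|_{\tilde\psi^{-1}(0)}$ with the Mishchenko bundle $\mathcal{L}_N$. Because $\Phi$ identifies $V$ with the cylinder $N\times[-1,1]$, the inclusion $\iota\colon N\cong\Phi(N\times\{0\})\hookrightarrow V$ is a homotopy equivalence, so $\iota_\ast\colon\pi_1(N)\to\pi_1(V)=\Gamma$ is an isomorphism. Functoriality of the Mishchenko construction under such inclusions yields a canonical isomorphism
\[
    \mathcal{L}_V\big|_{\tilde\psi^{-1}(0)}\cong\iota^\ast\mathcal{L}_V\cong\mathcal{L}_N
\]
of flat bundles of finitely generated projective Hilbert $C^\ast\Gamma$-modules. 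Together with the compatibility of the induced spin structure on $\Phi(N\times\{0\})$ (with outward normal pointing in the positive $t$-direction, consistent with $h'(0)>0$) with the given spin structure on $N$, this gives
\[
    \ind\bigl(\spind_{\tilde\psi^{-1}(0),\,\mathcal{L}_V|_{\tilde\psi^{-1}(0)}}\bigr)=\ind\bigl(\spind_{N,\,\mathcal{L}_N}\bigr)=\alpha(N),
\]
by definition of the Rosenberg index. Chaining the three equalities completes the proof.

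The main technical point is the naturality step $\iota^\ast\mathcal{L}_V\cong\mathcal{L}_N$, together with the compatibility of spin structures under $\iota$; this is essentially formal once one notes that $\iota$ induces an isomorphism of fundamental groups, but it is the only place where the geometric hypothesis that $V$ is a \emph{band over} $N$ enters the argument. Everything else is a direct application of the theory developed in the paper.
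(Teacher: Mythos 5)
Your proposal is correct and follows essentially the same route as the paper: replace the given potential by a convenient band compatible Callias potential whose zero level set is a standard copy of $N$ (the paper builds it in a geodesic tubular neighborhood of $N\times\{0\}$, you via the band diffeomorphism), then apply Lemma~\ref{L:Properties Callias boundary} and Theorem~\ref{T:Callias index}, and conclude from the fact that $N\hookrightarrow V$ induces an isomorphism on $\pi_1$, which identifies $\mathcal{L}_V|_N$ with $\mathcal{L}_N$. The only difference is that you spell out the naturality of the Mishchenko bundle and the spin-structure compatibility, which the paper compresses into one sentence.
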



\begin{proof}
For $\lambda>0$ small enough, let $\psi_1\colon V\to [-\lambda,\lambda]$ be a smooth function such that $0$ is a regular value, $\psi_1^{-1}(0)=N\times\{0\}$, the support of $\dd\psi_1$ is contained in the interior of a closed geodesic tubular neighborhood of $N\times\{0\}$, $\psi_1=\lambda$ in a neighborhood of $\partial_+V$, and $\psi_1=-\lambda$ in a neighborhood of $\partial_-V$.
Observe that $\psi_1$ is a band compatible Callias potential.
Since the inclusion $N\hookrightarrow V$ induces an isomorphism on $\pi_1$, Identity~\eqref{E:Callias mishchenko} follows from Lemma~\ref{L:Properties Callias boundary} and Theorem~\ref{T:Callias index}.
\end{proof}


\subsection{Proof of Theorem~\ref{T:sharp band width}}\label{SS:Callias vanishing}
Suppose
\begin{equation}\label{E:band width>c}
    \width(V)>2\pi\sqrt{\frac{n-1}{\sigma n}}.
\end{equation}
In order to prove the thesis, we need to show that $\alpha(N)=0$.

For an admissible rescaling function $\rho$ and a band compatible Callias potential $\psi$, consider the generalized Callias-type operator $\Bop_{\rho,\psi}^{\mathcal{L}_V}$ used in Corollary~\ref{C:Callias Mishchenko}.
Let $\Psi_{\rho,\phi}^{\omega}\colon V^\rm o\to\RR$ be the smooth function defined by the formula
\begin{equation}\label{E:Psi}
    \Psi_{\rho,\phi}^{\omega}(x)\defeq 
    \frac{\bar n\bar\sigma\omega}{1+\omega}\rho^4(x)
    -\omega\rho^2(x)\abs{\dd\rho_x}^2+\psi^2(x)-\rho^2(x) \abs{\dd\psi_x},\qquad x\in V^\rm o.
\end{equation}
Here, we use the notation $\bar n=n/(n-1)$ and $\bar\sigma=\sigma/4$ introduced respectively in Section~\ref{S:admissible rescaling} and Section~\ref{S:LNP}.
Since the bundle $\mathcal{L}_V$ is flat and $\scal_g\geq\sigma$, from Proposition~\ref{P:rescaled Lichnerowitcz} and Identity~\eqref{E:Callias square} we deduce
\begin{equation}\label{E:B_r,p>Psi}
    \Bigl<\bigl(\Bop_{\rho,\phi}^{\mathcal{L}_V}\bigr)^2w,w\Bigr>\geq\bigl<\Psi_{\rho,\phi}^{\omega}w,w\bigr>,
\end{equation}
for every $w\in\Gamma_\rm c(V^\rm o;\spinor_M\hat\tensor \mathcal{L}_V\hat\tensor\cl_{0,1})$ and every $\omega>0$.
By Corollary~\ref{C:Callias Mishchenko} and Inequality~\eqref{E:B_r,p>Psi}, it suffices to show that there exist an admissible rescaling function $\rho$, a band compatible Callias potential $\psi$ and positive constants $\omega$, $c$ such that
\begin{equation}\label{E:Psi>c}
    \Psi_{\rho,\psi}^\omega(x)\geq c,\qquad x\in V^\rm o.
\end{equation}
To this end, we will use Lemma~\ref{L:diff inequality} in a similar way as in the proof of Theorem~\ref{T:GLN}.

Using Condition~\eqref{E:band width>c}, choose a constant $\Lambda$ satisfying
\begin{equation}
    \frac{2\pi}{\sqrt{\bar n\sigma}}<2\Lambda<\width(V).
\end{equation}
Pick a constant $\omega>0$ and smooth functions $Z\colon (0,\infty)\to (0,\infty)$ and $Y\colon(0,\infty)\to [0,\infty)$ satisfying Conditions~(a)--(e) of Lemma~\ref{L:diff inequality}.

Let $\tau^+$ be the distance function from $\partial_+V$ and let $\tau^-$ be the distance function from $\partial_-V$.
We regard $\tau^\pm$ as functions on $V^\rm o$.
Fix a constant $\delta>0$ and consider the functions $\mu^+$ and $\mu^-$ defined as 
\[
    \mu^\pm(x)\defeq\min\bigl({\tau^\pm(x)}/{2},\delta\bigr),\qquad x\in V^\rm o.
\]
By~\cite[Proposition~2.1]{GW79}, there exist smooth functions $\tau_\delta^+$ and $\tau_\delta^-$ on $V^\rm o$ such that 
\begin{equation}\label{C:tau_delta^pm1}
    \abs{\tau^\pm(x)-\tau_\delta^\pm(x)}<\mu^\pm(x),\qquad\forall x\in V^\rm o
\end{equation}
and
\begin{equation}\label{C:tau_delta^pm2}
    \norm{\dd\tau_\delta^\pm}_\infty\leq1+\delta.    
\end{equation}
Choose constants $\Lambda_1$, $\Lambda_2$ such that $\Lambda<\Lambda_1<\Lambda_2<\width(V)/2$ and consider the open sets
\[
    \Omega_\pm\defeq\left\{x\in V^\rm o\mid \tau_\delta^\pm(x)<\Lambda_2\right\}.
\]
By taking $\delta$ small enough, $\bar\Omega_-\cap\bar\Omega_+=\emptyset$.
Define the set
\[
    \Omega\defeq\left\{x\in V^\rm o\mid \min\left(\tau_\delta^+(x),\tau_\delta^-(x)\right)>\Lambda_1\right\}.
\]
Let $\tau_\delta$ be a smooth function on $V^\rm o$ such that $\tau_\delta=\tau_\delta^-$ on $\Omega_-$, $\tau_\delta=\tau_\delta^+$ in $\Omega_+$ and $\tau_\delta(x)\geq\Lambda_1$ when $x\in\Omega$.
Observe that Properties~\eqref{C:tau_delta^pm1} and~\eqref{C:tau_delta^pm2} imply that $\tau_\delta$ is positive and goes to $0$ at infinity.

Define the smooth function $\rho\colon V^\rm o\to(0,1]$ by setting
\begin{equation}\label{E:rho Callias1}
    \rho\defeq Z\circ \tau_\delta.
\end{equation}
By Proposition~\ref{P:admissible criterion}, Remark~\ref{R:coinciding at infty} and Property~(a) of Lemma~\ref{L:diff inequality}, $\rho$ is an admissible rescaling function.
Moreover, by Property~(b) of Lemma~\ref{L:diff inequality}, we have
\begin{texteqn}\label{E:rho Callias2}
    $\rho=\rho_0$ on $\Omega$, for some constant $\rho_0\in(0,1]$.
\end{texteqn}
Define a function $\psi\colon V^\rm o\to[0,\infty)$ by setting
\begin{equation}\label{E:psi =Y tau_delta}
    \psi=\pm Y\circ \tau_\delta\qquad\text{on}\ \ \Omega_\pm\qquad\text{and}\qquad\psi=0\qquad\text{on}\ \ \Omega.
\end{equation}
By Properties~(c) and~(d) of Lemma~\ref{L:diff inequality}, $\psi$ is well defined and is a band compatible Callias potential.
Using~\eqref{E:rho Callias2} and~\eqref{E:psi =Y tau_delta}, we deduce
\begin{equation}
    \Psi_{\rho,\psi}^{\omega}(x)=    
    \frac{\bar n\bar\sigma\omega}{1+\omega}\rho_0^4>0,\qquad x\in\Omega.
\end{equation}
Therefore, Inequality~\eqref{E:Psi>c} holds on $\Omega$.

In order to complete the proof, we will show that, for $\delta$ small enough, Inequality~\eqref{E:Psi>c} holds on $\Omega_+$ and $\Omega_-$ as well.
Using~\eqref{C:tau_delta^pm2},~\eqref{E:rho Callias1} and~\eqref{E:psi =Y tau_delta} we have
\begin{equation*}
    \abs{\dd\rho_x}\leq(1+\delta)\abs{Z^\prime(\tau_\delta(x))}\qquad\text{and}\qquad
    \abs{\dd\phi_x}\leq(1+\delta)\abs{Y^\prime(\tau_\delta(x))}
\end{equation*}
for every $x\in\Omega_-\cup\Omega_+$.
By taking $\delta$ small enough and using Property~(e) of Lemma~\ref{L:diff inequality}, we deduce that there exists a constant $c_1>0$ such that
\begin{multline*}
    \Psi_{\rho,\phi}^{\omega}(x)
    = \frac{\bar n\bar\sigma \omega}{1+\omega}\rho^4(x)
    -\omega \rho^2(x)\abs{\dd\rho_x}^2+\psi^2(x)-\rho^2(x) \abs{\dd\psi_x}\\
    \geq\frac{\bar n\bar\sigma \omega }{1+\omega }Z^4(\tau_\delta(x))
    -\omega Z^2(\tau_\delta(x))(1+\delta)^2\abs{Z^\prime(\tau_\delta(x))}^2\\
    +Y^2(\tau_\delta(x))
    -Z^2(\tau_\delta(x)) (1+\delta)\abs{Y^\prime(\tau_\delta(x))}\geq c_1
\end{multline*}
for every $x\in\Omega_-\cup\Omega_+$.
This concludes the proof.
\qed

\end{document}